\documentclass[10pt,reqno]{amsart}
  \usepackage{geometry}
  \geometry{paperwidth=176mm, paperheight=250mm, textheight=189mm,
            tmargin=35mm, lmargin=30mm, rmargin=20mm, headsep=8mm,
            headheight=11.0pt, footskip=30pt, twoside=true}

\usepackage{color, graphicx}
\usepackage[usenames,dvipsnames,svgnames,table]{xcolor}
\usepackage{tikz} 

\usepackage[T1]{fontenc}
\usepackage[english]{babel}
\setcounter{MaxMatrixCols}{30}
\providecommand{\U}[1]{\protect\rule{.1in}{.1in}}

\newtheorem{theorem}{Theorem}[section]

\newtheorem{corollary}[theorem]{Corollary}

\newtheorem{definition}[theorem]{Definition}
\newtheorem{example}[theorem]{Example}

\newtheorem{lemma}[theorem]{Lemma}
\newtheorem{notation}[theorem]{Notation}

\newtheorem{proposition}[theorem]{Proposition}
\newtheorem{remark}[theorem]{Remark}

\def\N{\mathbb{N}}

\newcommand{\vs}[1]{\langle #1 \rangle}

\newcommand{\arrow}[2]{\overset{#1}{\xrightarrow{\hspace*{#2}}}}

\DeclareMathOperator{\shad}{Shad}
\DeclareMathOperator{\gaps}{gaps}
\DeclareMathOperator{\cogaps}{cogaps}
\DeclareMathOperator{\maxgen}{maxgen}
\DeclareMathOperator{\pred}{pred}
\DeclareMathOperator{\suc}{succ}
\DeclareMathOperator{\pre}{pre}

\title{Gotzmann monomials in four variables}

\author{V. Bonanzinga and S. Eliahou}

\begin{document}
\maketitle

\begin{abstract} It is a widely open problem to determine which monomials in the $n$-variable polynomial ring $K[x_1,\dots,x_n]$ over a field $K$ have the Gotzmann property, \emph{i.e.} induce a Borel-stable Gotzmann monomial ideal. Since 2007, only the case $n \le 3$ was known. Here we solve the problem for the case $n=4$. The solution involves a surprisingly intricate characterization.
\end{abstract}

\medskip

\noindent
2010 \emph{Mathematics Subject Classification.} 13F20; 13D40; 05E40.

\smallskip

\noindent
\emph{Key words and phrases.} Monomial ideal; Borel-stable ideal; Lexsegment; Gotzmann ideal; Gotzmann persistence theorem.

\section{Introduction}\label{sec introduction}

Let $K$ be a field and let $R_n=K[x_1,\dots,x_n]$ be the $n$-variable polynomial algebra over $K$ endowed with its usual grading $\deg(x_i)=1$ for all $i$. We denote by $S_n \subset R_n$ the set of all monomials $u=x_1^{a_1}\cdots x_n^{a_n}$ in $R_n$, and by $S_{n,d} \subset S_n$ the subset of monomials of degree $\deg(u)=\sum_i a_i=d$.

A monomial ideal $J \subseteq R_n$ is said to be \emph{Borel-stable} or \emph{strongly stable} if for any monomial $v \in J$ and any variable $x_j$ dividing $v$, one has $x_iv/x_j \in J$ for all $1 \le i \le j$. Given a monomial $u \in S_n$, let $\vs{u}$ denote the smallest Borel-stable monomial ideal in $R_n$ containing $u$, and let $B(u)$ denote the unique minimal system of monomial generators of $\vs{u}$. Then $B(u)$ may be described as the smallest set of monomials containing $u$ and stable under the operations $v \mapsto vx_i/x_j$ whenever $x_j$ divides $v$ and $i \le j$. 

Recall that a homogeneous ideal $I \subseteq R_n$ is a \textit{Gotzmann ideal} if, from a certain degree on, its Hilbert function attains Macaulay's lower bound. See \emph{e.g.} \cite{B,He} for more details. Determining which homogeneous ideals are Gotzmann ideals is notoriously difficult. This will be illustrated in this paper, where our determination of all monomials $u$ in $S_4$ such that the ideal $\vs{u}$ is a Gotzmann ideal involves a surprisingly complicated formula. We introduce the following definition.

\begin{definition} We say that a monomial $u \in S_n$ is a \emph{Gotzmann monomial} if its associated Borel-stable monomial ideal $\vs{u}$ is a Gotzmann ideal. 
\end{definition}

Determining all Gotzmann monomials in $S_n$ is a widely open problem. Indeed, the current knowledge about it is limited to the case $n \le 3$. Specifically, for $n \le 2$ all monomials in $S_1$ or $S_2$ are Gotzmann, whereas for $n=3$, \textit{the monomial $x_1^ax_2^bx_3^t$ is Gotzmann in $S_3$  if and only if $t \ge \binom{b}{2}$.} The latter result can be deduced from \cite[Proposition 8]{Mu2}. A short proof using the general tools developed in this paper will be given in the last section.

\smallskip
The above result for $n=3$ illustrates a general property of Gotzmann monomials, proved in \cite{B} using Gotzmann's persistence theorem.
\begin{theorem}\label{VB 2003} Let $u \in S_n$. 
\begin{enumerate}
\item There exists $k \in \N$ such that $ux_n^k$ is Gotzmann in $S_n$.
\item If $u$ is Gotzmann in $S_n$, then so is $u x_n$.
\end{enumerate}
\end{theorem}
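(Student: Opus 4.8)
The plan is to work with the combinatorial reformulation of the Gotzmann property in terms of Borel-stable sets of monomials and their shadows. Recall that for a set $W \subseteq S_{n,d}$, its shadow is $\shad(W) = \{x_i v : v \in W, 1 \le i \le n\}$, and a Borel-stable monomial ideal $J$ is Gotzmann precisely when, writing $W_d = J \cap S_{n,d}$ for each degree $d$, one has $|\shad(W_d)| = |\shad(L)|$ where $L$ is the lexsegment of $S_{n,d}$ of the same cardinality as $W_d$ — equivalently, the growth of the Hilbert function of $J$ matches Macaulay's bound in every degree from $d_0$ on. Since $\vs{u}$ is generated in a single degree $d = \deg(u)$, the key point is that $\vs{u}$ is Gotzmann if and only if $B_d := \vs{u} \cap S_{n,d}$ together with all its iterated shadows satisfies the Macaulay equality; and by Gotzmann's persistence theorem, it suffices to check this for the single step from degree $d$ to degree $d+1$, i.e. $|\shad(B(u))| = |\shad(L)|$ for the lexsegment $L$ with $|L| = |B(u)|$.

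For part (1), I would argue as follows. Fix $u \in S_n$ and consider the monomials $u x_n^k$ for $k \ge 0$. As $k$ grows, the Borel-stable set $B(u x_n^k)$ in degree $d+k$ consists of all monomials $v x_n^k$ with $v \in B(u)$ suitably interpreted — more precisely, the "$x_n$-free part" of $B(u x_n^k)$ stabilizes, since appending powers of the last variable does not create new generators under the Borel moves except trivial ones. The quantity measuring failure of the Gotzmann property, namely $|\shad(B(u x_n^k))| - |\shad(L_k)|$ where $L_k$ is the comparable lexsegment, is a non-negative integer that I expect to be non-increasing in $k$: multiplying by $x_n$ pushes the set "down" in the lex order in a way that only helps it resemble a lexsegment. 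Since it is a non-negative integer that does not increase, it must eventually be constant; the remaining task is to show that constant value is $0$. For this I would exhibit, for $k$ large, an explicit comparison: the lexsegment $L_k$ of the right cardinality is itself Borel-stable and is in fact the degree-$(d+k)$ piece of $\vs{x_1^{a_1} x_n^{k'}}$ for an appropriate exponent, and $B(u x_n^k)$ can be sandwiched between two lexsegments whose shadow counts agree for $k \gg 0$ by a direct binomial estimate — the tail $x_n^k$ dominates and the Macaulay bound becomes tight.

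For part (2), assume $u$ is Gotzmann in $S_n$, so $|\shad(B(u))| = |\shad(L)|$ for the lexsegment $L \subseteq S_{n,d}$ with $|L| = |B(u)|$, and by persistence all further shadows are also of minimal size, hence $\vs{u}$ is Gotzmann in every degree $\ge d$. I want the same for $u x_n$. The crucial observation is that $\vs{u x_n}$ and $\vs{u}$ agree in all degrees $\ge d+1$ \emph{except} that $\vs{u x_n}$ is missing exactly those degree-$(d+1)$ monomials of $\vs{u}$ that are "new generators" — but since multiplication by $x_n$ is the Borel move that strictly enlarges degree without introducing obstructions, in fact $\vs{u x_n}_{d+1} = \shad_n(B(u)) \cup (\text{stuff already forced})$, and one checks $(\vs{u x_n})_e = (\vs{u})_e$ for all $e \ge d+1$. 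Wait — that is not literally true since $\vs{u}$ already contains things in degree $d$; the honest statement is that $(\vs{u x_n})_e$ is a Borel-stable subset of $(\vs{u})_e$ for $e \ge d+1$, obtained by discarding the monomials all of whose "histories" require the degree-$d$ generators with no $x_n$ to spare. The Gotzmann property of $\vs{u x_n}$ then follows from that of $\vs{u}$ together with persistence, provided the single shadow step from $(\vs{u x_n})_{d+1}$ to $(\vs{u x_n})_{d+2}$ is of minimal size; and here I would use that a Borel-stable set of the form $\shad_n(W)$ with $W$ Borel and Gotzmann inherits minimal shadow growth. The main obstacle is exactly this last point: verifying that the specific Borel-stable set $B(u x_n)$ — which need \emph{not} be a lexsegment even when $L$ is — still has lexsegment-sized shadow. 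I would handle it by the standard compression/reduction argument, replacing $B(u x_n)$ by a lexsegment with the same cardinality and shadow size via the Kruskal–Katona machinery adapted to the Borel (strongly stable) setting, noting that $x_n$-multiplication commutes with taking shadows. This compression step, rather than the bookkeeping, is where the real work lies.
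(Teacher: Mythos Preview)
First, a framing remark: the paper does not actually supply a proof of this theorem. It merely records that the result was established in \cite{B} using Gotzmann's persistence theorem. So there is no detailed argument in the paper to compare against; but your sketch can still be assessed on its own terms, and it has a genuine gap.

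The missing ingredient is the identity
\[
B(u x_n) \;=\; \shad(B(u)).
\]
This is straightforward: for any $v \in B(u)$ one has $v x_n \in B(u x_n)$, and then Borel-stability of $B(u x_n)$ forces $v x_i \in B(u x_n)$ for all $i \le n$, giving $\shad(B(u)) \subseteq B(u x_n)$; conversely $\shad(B(u))$ is Borel-stable and contains $u x_n$, whence $B(u x_n) \subseteq \shad(B(u))$. Iterating gives $B(u x_n^k) = \shad^k(B(u))$.

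With this identity in hand, part (2) is immediate from Gotzmann persistence: if $B(u)$ is a Gotzmann set then so is $\shad(B(u)) = B(u x_n)$, hence $u x_n$ is a Gotzmann monomial. Your proposal circles this point --- you invoke persistence, you talk about shadows --- but never lands on the identity, and as a result the argument drifts. The passage beginning ``Wait --- that is not literally true'' is a symptom: you are trying to relate $(\vs{u x_n})_e$ to $(\vs{u})_e$ through ad hoc inclusions, when in fact they are \emph{equal} for all $e \ge d+1$, precisely because $B(u x_n) = \shad(B(u))$. The ``main obstacle'' you identify at the end --- that $B(u x_n)$ need not be a lexsegment yet should have lexsegment-sized shadow --- is exactly what persistence gives you for free, and does not require any Kruskal--Katona compression.

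For part (1), your monotonicity heuristic (``I expect to be non-increasing'') and the sandwiching sketch are too vague to constitute a proof. Once you have $B(u x_n^k) = \shad^k(B(u))$, what is needed is the standard fact that iterated shadows of a Borel-stable set are eventually Gotzmann; this follows, for instance, from Macaulay's theorem together with Gotzmann's regularity bound on the Hilbert function of the ideal $(\vs{u})$. That is the content behind the citation to \cite{B}.
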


This reduces the determination of Gotzmann monomials in $S_n$ to the following question. Given $u_0 \in S_{n-1}$, what is the least exponent $t \ge 0$ such that $u_0x_n^t$ is a Gotzmann monomial in $S_n$?

\smallskip
Our main result in this paper is the classification of all Gotzmann monomials in $S_4$. It states that \textit{a monomial $u=x_1^{a}x_2^{b}x_3^{c}x_4^{t}$ is a Gotzmann monomial  in $S_4$ if and only if}
$$
t \ \ge \ \binom{\binom{b}{2}}{2}+\frac{b+4}{3}\binom{b}{2}+(b+1)\binom{c+1}{2}+\binom{c+1}{3}-c.
$$
See Theorem~\ref{main thm}. Interestingly, before achieving this rather intricate characterization, all the easy-to-perform computer-algebraic experiments we ran in order to get a clue at it were of no help. Only the conceptual tools developed below allowed us to formulate and prove this result. Completing the analogous task in $S_n$ for $n \ge 5$ remains an open problem.

\subsection{Some related results}

In 2000, Aramova, Avramov and Herzog posed the open problem of determining which monomial ideals are Gotzmann ideals \cite{AAH}. Some partial answers have since emerged. In 2003, the first author characterized all principal Borel ideals with Borel generator up to degree $4$ which are Gotzmann \cite{B}. In 2006, Mermin classified Lexlike ideals, \emph{i.e.} ideals which are generated by initial segments of ``lexlike'' sequences \cite {Me}. In 2007, Murai classified Gotzmann ideals in the polynomial ring in $3$ variables \cite {Mu2}. In 2008, Murai and Hibi described all Gotzmann ideals in $K[x_1,\dots,x_n]$ with fewer than $n$ generators \cite {MH}. In 2008, Loredana Sorrenti, Anda and Oana Olteanu classified Gotzmann ideals which are generated by segments in the lexicographic order \cite {OOS}. In 2012, Hoefel characterized all Gotzmann edge ideals \cite {Ho}. In 2012, Hoefel and Mermin described all Gotzmann squarefree monomial ideals \cite {HM}. See also \cite{PS} for related results.

\subsection{Contents}
In Section~\ref{sec basic}, we recall or introduce basic notions such as lexsegments and lexintervals, the sets of \emph{gaps} and \emph{cogaps} of a monomial, the \emph{maxgen monomial} of a set of monomials, and finally Gotzmann monomials and criteria in terms of gaps and cogaps to recognize them. In Section~\ref{sec gaps}, we focus on properties of the gaps and cogaps of a monomial and how to compute them. In Section~\ref{sec pred and succ}, we describe the lexicographic predecessor and successor of a monomial. Section~\ref{sec maxgen} is devoted to the determination of the maxgen monomial of lexintervals. In Section~\ref{sec x1 xn}, we show some specific behaviors of the first and last variables. Finally, in Section~\ref{sec gotzmann} we use all the material developed in the preceding sections to prove our main theorem on the characterization of Gotzmann monomials in four variables.

\section{Background and basic notions}\label{sec basic}

\subsection{Lexsegments and lexintervals}
Recall the definition of the \textit{lexicographic order} on $S_{n,d}$. Let $u,v \in S_{n,d}$. Write $u=\mathbf{x}^\mathbf{a}=x_1^{a_1}\cdots x_n^{a_n}$ with $\mathbf{a}=(a_1,\dots,a_n) \in \N^n$, and similarly $v=\mathbf{x}^\mathbf{b}$ with $\mathbf{b} \in \N^n$. By definition, we have 
$$
u  \, >_{lex}\, v
$$
if and only if the leftmost nonzero coordinate of $\mathbf{a}-\mathbf{b}$ is positive. Equivalently, let $$u=x_{i_1}\cdots x_{i_d}, \, v=x_{j_1}\cdots x_{j_d} \in S_{n,d}$$ 
with $i_1\le \dots \le i_d$, $j_1\le \dots \le j_d$. Then $u >_{lex} v$ if and only if the leftmost nonzero coordinate of $(i_1-j_1,\dots,i_d-j_d)$ is negative. For simplicity, we shall omit the subscript and write $\ge$ instead of $\ge_{lex}$.

\smallskip
We shall need below the following well-known equivalence.

\begin{lemma}\label{order and multiplication} Let $u,v \in S_{n,d}$. Then for all $1 \le i \le n$, we have $ u > v$ if and only if $x_iu > x_iv$.
\end{lemma}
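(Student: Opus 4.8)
The plan is to reduce the statement directly to the exponent-vector description of the lexicographic order recalled just above. First I would note that since $u,v \in S_{n,d}$, both $x_iu$ and $x_iv$ lie in $S_{n,d+1}$, so the comparison $x_iu > x_iv$ is between two monomials of the same degree and is therefore governed by the same definition as the comparison $u > v$.

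Next, write $u = \mathbf{x}^{\mathbf{a}}$ and $v = \mathbf{x}^{\mathbf{b}}$ with $\mathbf{a},\mathbf{b}\in\N^n$, $\sum_j a_j = \sum_j b_j = d$, and let $\mathbf{e}_i\in\N^n$ denote the $i$-th standard basis vector. Then $x_iu = \mathbf{x}^{\mathbf{a}+\mathbf{e}_i}$ and $x_iv = \mathbf{x}^{\mathbf{b}+\mathbf{e}_i}$. The key (and essentially only) observation is the identity of difference vectors
$$
(\mathbf{a}+\mathbf{e}_i) - (\mathbf{b}+\mathbf{e}_i) \;=\; \mathbf{a}-\mathbf{b}.
$$
Consequently, the leftmost nonzero coordinate of the difference vector attached to the pair $(x_iu,x_iv)$ is exactly the leftmost nonzero coordinate of the difference vector attached to the pair $(u,v)$ — in particular it has the same sign (or both difference vectors are zero, in which case $u=v$ and $x_iu = x_iv$). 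By the definition of $>_{lex}$, this immediately gives $u > v \iff x_iu > x_iv$, and the lemma follows.

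Since this argument is a one-line consequence of the definition, there is no real obstacle; the only point deserving a word of care is the bookkeeping that multiplication by $x_i$ preserves the ``same degree'' hypothesis under which the lex order is defined. (Alternatively, one could argue with the description $u = x_{i_1}\cdots x_{i_d}$: multiplying by $x_i$ inserts the index $i$ into both sorted index sequences, and one checks this insertion does not alter the leftmost sign in the coordinatewise difference; but the exponent-vector formulation above is cleaner and I would present that one.)
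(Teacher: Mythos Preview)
Your proof is correct and follows exactly the same approach as the paper: write $u=\mathbf{x}^{\mathbf{a}}$, $v=\mathbf{x}^{\mathbf{b}}$, observe that $(\mathbf{a}+\mathbf{e}_i)-(\mathbf{b}+\mathbf{e}_i)=\mathbf{a}-\mathbf{b}$, and conclude directly from the definition of the lex order. Your write-up is slightly more verbose (explicitly noting the equal-degree bookkeeping and mentioning the alternative index-sequence argument), but the substance is identical.
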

\begin{proof} Write $u=\mathbf{x}^\mathbf{a}$, $v=\mathbf{x}^\mathbf{b}$ with $\mathbf{a}, \mathbf{b} \in \N^n$. Then $x_iu=\mathbf{x}^{\mathbf{a}+\mathbf{e}_i}$, $x_iv=\mathbf{x}^{\mathbf{b}+\mathbf{e}_i}$ where $\mathbf{e}_i \in \N^n$ is the basis vector with a $1$ at the $i$th coordinate and $0$ elsewhere. The statement follows since 
$$(\mathbf{a}+\mathbf{e}_i)-(\mathbf{b}+\mathbf{e}_i) \, = \, \mathbf{a}-\mathbf{b}. \qedhere$$
\end{proof}

The following notation will be used throughout.
\begin{notation}\label{intervals}
For $u \in S_{n,d}$, we denote by $L(u)$ the \emph{lexsegment} determined by $u$, \emph{i.e.} 
$$L(u) \ = \ \{v \in S_{n,d} \mid v \ge u\}.$$
More generally, for $u_1,u_2 \in S_{n,d}$ such that $u_1 \ge u_2$, we denote by $L(u_1,u_2)$ the \emph{lexinterval} of intermediate monomials, namely
$$L(u_1,u_2) = \{v \in S_{n,d} \mid u_1 \ge v \ge u_2\}.$$
Thus $L(u)=L(x_1^d,u)$ for $u \in S_{n,d}$. Finally, we denote
$$
L^*(u_1,u_2) \ = \ L(u_1,u_2) \setminus \{u_1\}  \ = \ \{v \in S_{n,d} \mid u_1 > v \ge u_2\}.
$$
\end{notation}

\subsection{Gotzmann sets}

\begin{definition} A subset $B \subseteq S_n$ is said to be \emph{Borel-stable} if $u \in B$ implies $x_iu/x_j \in B$ for all $1 \le i \le j \le n$ such that $x_j$ divides $u$.
\end{definition}

\begin{definition} A monomial ideal $I \subseteq R_n$ is said to be \emph{Borel-stable} if its set of monomials $I \cap S_n$ is a Borel-stable set.
\end{definition}

Let $B \subseteq S_{n,d}$. We define and denote the \emph{shade}\footnote{$\shad$ should stand for \textit{shade} as in Combinatorial set theory~\cite {An}, and not for ``shadow'' as written in \cite{B, He}. The \textit{shadow} of $B$ actually corresponds to the set of all monomials $u/x_j$ with $u \in B$ and $x_j$ dividing $u$.} of $B$ by $$\shad(B)= \ \{x_iu \mid u \in B, i=1, \ldots,n \} \subseteq S_{n,d+1}.$$
For $i \ge 2$, the $i$-th shade of $B$ is defined recursively by $\shad^i(B)=\shad(\shad^{i-1}(B)).$
\begin{notation} Let $B \subseteq S_{n,d}.$ We denote by $B^{lex}$ the unique lexsegment in $S_{n,d}$ such that $|B^{lex}|= |B|$.
\end{notation}
Thus, there exists a unique monomial $w_B$ in $S_{n,d}$ such that
$$B^{lex}=L(w_B).$$
\begin{example} Let $B=\left\{x_1^2, x_1x_2,  x_1x_3,  x_2^2 \right\} \subseteq S_{4,2}$.
The lexsegment of length $|B|=4$ in $S_{4,2}$ is 
$$L(x_1x_4)=\left\{x_1^2, x_1x_2,  x_1x_3,  x_1x_4 \right\}.$$ Hence  
$B^{lex}=\left\{x_1^2, x_1x_2,  x_1x_3,  x_1x_4 \right\}=L(x_1x_4)$, and thus $w_B=x_1x_4$. 
\end{example}

\medskip
The following result can be found in~\cite[Theorem 2.7]{He}.
\begin{theorem}\label{thm ineq lex} For any subset $B \subseteq S_{n,d}$, one has 
$$|\shad(B)| \ge |\shad(B^{lex})|.$$
\end{theorem}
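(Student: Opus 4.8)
The statement is the classical Macaulay bound for the shade (one-step shadow growth) of a set of monomials, phrased here for the polynomial ring rather than the exterior algebra. I would prove it by the standard \emph{compression} technique, reducing an arbitrary set $B$ to a lexsegment without ever increasing the size of its shade. The plan is to define, for each pair of indices $i<j$, a combinatorial operation $C_{ij}$ on subsets of $S_{n,d}$ that makes $B$ ``more lex-like'' in the $(x_i,x_j)$-direction, show that $|\shad(C_{ij}(B))|\le|\shad(B)|$, and then iterate until we reach a fixed point, which one checks must be the lexsegment $B^{lex}$.

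First I would set up \emph{$ij$-compression}. Fix $1\le i<j\le n$. Every monomial $u\in S_{n,d}$ factors uniquely as $u = x_i^{a}x_j^{b}\cdot m$ with $m$ a monomial in the remaining variables. For a fixed such $m$ of degree $d-k$, the monomials of $B$ of the form $x_i^ax_j^bm$ with $a+b=k$ form a subset of a chain of length $k+1$; replace that subset by the \emph{top} segment of the chain (largest $a$ first, since $x_i$ is lex-bigger than $x_j$). Call the resulting set $C_{ij}(B)$; it has the same cardinality as $B$. The first key step is the inequality $|\shad(C_{ij}(B))|\le|\shad(B)|$. I would prove this by a local, chain-by-chain analysis: group $S_{n,d+1}$ by the same monomial $m$ in the complementary variables, observe that $\shad$ interacts with each $(x_i,x_j)$-chain almost independently (the only interaction with neighbouring $m$'s comes from multiplying by a variable other than $x_i,x_j$, and that interaction is itself compatible with taking top-segments), and conclude that compressing $B$ in the $m$-fibre can only shrink, or leave unchanged, the shade in the corresponding fibres upstairs. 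This is the heart of the argument and the step I expect to be the main obstacle: making the ``almost independently'' precise requires a careful bookkeeping of how $\shad(B)$ decomposes, and one has to handle the fact that a monomial of $S_{n,d+1}$ can be hit from $B$ via several different variables.

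Next I would run the \emph{iteration / termination} argument. Define a weight such as $\Phi(B)=\sum_{u\in B}(\text{lex-rank of }u)$, or simply observe that repeatedly applying the operations $C_{ij}$ for all pairs $i<j$ strictly increases the lex-position of $B$ (in a suitable well-founded sense) unless $B$ is already stable under all $C_{ij}$. Since $S_{n,d}$ is finite, the process terminates at a set $B_\infty$ with $|B_\infty|=|B|$ and $|\shad(B_\infty)|\le|\shad(B)|$. The final step is to check that a set stable under every $ij$-compression must be a lexsegment: if $B_\infty$ is $ij$-compressed for all $i<j$ but is not an initial lexsegment, one produces monomials $u\in B_\infty$, $v\notin B_\infty$ with $v>_{lex}u$, and by moving through a chain of ``elementary'' lex-comparisons (changing one coordinate pair at a time, exactly where the $C_{ij}$ operate) derives a contradiction with stability. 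Hence $B_\infty=B^{lex}$, and since $\shad$ depends only on the underlying set, $|\shad(B^{lex})|=|\shad(B_\infty)|\le|\shad(B)|$, as claimed. I would remark that this is Theorem~2.7 of~\cite{He}, so in the paper itself it suffices to cite it; the sketch above records the proof strategy for completeness.
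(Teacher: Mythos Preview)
The paper's own ``proof'' of this theorem is simply the sentence ``See~\cite{He}'', so your closing remark that in the paper a citation suffices matches exactly what the authors do.

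Your sketch of the compression argument, however, has a genuine gap at the termination step. The claim that a set $B\subseteq S_{n,d}$ which is $C_{ij}$-stable for \emph{every} pair $i<j$ must be a lexsegment is false. A concrete counterexample in $S_{3,2}$ is $B=\{x_1^2,\,x_1x_2,\,x_2^2\}$: one checks directly that $B$ is $(1,2)$-, $(1,3)$- and $(2,3)$-compressed, yet the lexsegment of size $3$ is $\{x_1^2,\,x_1x_2,\,x_1x_3\}\neq B$. (Here $|\shad(B)|=7>6=|\shad(B^{lex})|$, so the inequality itself is fine; it is only your identification $B_\infty=B^{lex}$ that fails.) This is the same phenomenon familiar from the squarefree Kruskal--Katona setting, and it persists in the multiset case.

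The standard repairs are either (a) to use a stronger compression, namely compression with respect to $n-1$ variables at a time: fix one variable $x_k$, write each $u\in B$ as $x_k^{e}\cdot m$ with $m$ in the remaining variables, and replace the degree-$(d-e)$ slice by its lexsegment in $S_{n-1}$, appealing inductively to the theorem in $n-1$ variables to control the shade --- this process \emph{does} terminate at a lexsegment; or (b) to keep your pairwise compressions but then finish by a direct argument comparing any fully compressed non-lex set to the lexsegment of the same size. Either route closes the gap; as written, your iteration argument does not reach $B^{lex}$.
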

\begin{proof} See~\cite{He}.
\end{proof}
\begin{definition} A subset  $B \subseteq S_{n,d}$ is said to be a \emph{Gotzmann set} if equality in Theorem~\ref{thm ineq lex} is achieved, \emph{i.e.} if $$|\shad(B)|=|\shad(B^{lex})|.$$
\end{definition}

\smallskip Recall that a homogeneous ideal $I \subseteq R_n$ is a \textit{Gotzmann ideal} if, from a certain degree on, its Hilbert function attains Macaulay's lower bound. Gotzmann sets are linked to Gotzmann ideals by the following result. See \emph{e.g.} \cite{PS} for more details.

\begin{proposition}\label{gotzmann sets and ideals} Let $B \subseteq S_{n,d}$ with $d \ge 1$. Then the ideal $(B)$ of $R_n$ spanned by $B$ is a Gotzmann ideal if and only if $B$ is a Gotzmann set. 
\end{proposition}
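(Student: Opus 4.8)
The plan is to reduce the statement about the ideal $(B) \subseteq R_n$ to the statement about the single-degree set $B \subseteq S_{n,d}$ by translating both the Gotzmann-ideal condition and the Gotzmann-set condition into the language of Hilbert functions, and then invoking Gotzmann's persistence theorem. First I would recall that for a homogeneous ideal $I$ generated in degree $d$, the Hilbert function of $R_n/I$ in degree $e \ge d$ is governed by Macaulay's bound: writing $h_e = \dim_K (R_n/I)_e$, one has $h_{e+1} \le h_e^{\langle e \rangle}$, where $h_e^{\langle e \rangle}$ is the Macaulay pseudo-power obtained from the $e$-th Macaulay representation of $h_e$. The ideal $I$ is Gotzmann precisely when this inequality is an equality for all $e \ge d$ (equivalently, from some degree on, by persistence). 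On the monomial side, $(B)_e = \shad^{e-d}(B)$ as a $K$-span, so $\dim_K (R_n/(B))_e = \dim_K R_{n,e} - |\shad^{e-d}(B)|$; and the passage to the lexsegment $B^{lex}$ realizes exactly the Macaulay lower bound, because the lexsegment ideal is the one whose growth is minimal in each degree.

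The key steps, in order, would be: (1) Observe that $|\shad(B^{lex})|$ equals $|(\shad(B))^{lex}|$; this is the standard fact that the shade of a lexsegment is again a lexsegment, so lexsegments are ``persistently extremal.'' Concretely, $\shad(L(w)) = L(w')$ for an explicitly determined $w'$, and hence $|\shad^{j}(B^{lex})|$ is exactly the Macaulay-minimal value given $|B|$. (2) Deduce that $B$ being a Gotzmann set (equality $|\shad(B)| = |\shad(B^{lex})|$ in degree $d{+}1$) already forces $|\shad^{j}(B)| = |\shad^{j}(B^{lex})|$ for all $j \ge 1$: this is precisely Gotzmann's persistence theorem — once the shadow/shade growth is minimal in one step from the generating degree, it stays minimal in every subsequent step. (3) Translate: $|\shad^{j}(B)| = |\shad^{j}(B^{lex})|$ for all $j$ is equivalent to saying $\dim_K(R_n/(B))_e$ attains Macaulay's lower bound for all $e \ge d$, i.e. $(B)$ is a Gotzmann ideal. (4) For the converse, if $(B)$ is a Gotzmann ideal then in particular the Hilbert function attains the Macaulay bound in degree $d{+}1$, which unwinds to $|\shad(B)| = |\shad(B^{lex})|$, so $B$ is a Gotzmann set.

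The main obstacle — or rather the main thing to get precisely right — is step (1)–(2): correctly identifying that the Macaulay-minimal growth of a set of given cardinality is realized by the lexsegment and is reproduced step by step, which is the content of Gotzmann's persistence theorem, and making sure the indexing between ``degree $d$ generation,'' ``first shade in degree $d+1$,'' and ``from a certain degree on'' in the definition of Gotzmann ideal all line up. One must also be careful that the ideal $(B)$ need not be Borel-stable for a general $B$, so the argument cannot rely on strong stability; it only uses Macaulay's theorem (Theorem~\ref{thm ineq lex}) together with persistence. Since the paper says ``see \emph{e.g.} \cite{PS} for more details,'' I would present this as a short derivation: cite Macaulay's estimate and Gotzmann's persistence theorem, spell out the dictionary $\dim_K(R_n/(B))_e = \dim_K R_{n,e} - |\shad^{e-d}(B)|$, and conclude that equality in degree $d+1$ propagates to all higher degrees, which is exactly the Gotzmann condition on the ideal.
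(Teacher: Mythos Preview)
Your proposal is correct and follows the standard route: the paper itself gives no proof of this proposition, deferring instead to \cite{PS}, and what you outline (Macaulay's bound realized by lexsegments, the fact that the shade of a lexsegment is a lexsegment, and Gotzmann's persistence theorem to propagate the degree-$(d{+}1)$ equality to all higher degrees) is exactly the argument one finds in the cited literature. There is nothing to compare against in the paper beyond the reference, and your sketch is the expected elaboration.
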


The next lemma is crucial in the characterization of Borel-stable sets which are Gotzmann sets. We first introduce some notation.
\begin{notation} Let $u \in S_n$ be a monomial distinct from $1$. We denote by $\max(u)$ the largest index $i \le n$ such that $x_i$ divides $u$. 
\end{notation}
\begin{notation} Let $B \subseteq S_{n,d}$ be a set of monomials of degree $d \ge 1$. For all $1 \le i \le n$, we denote by $m_i(B)$ the number of monomials $u \in B$ such that $\max(u)=i$.
\end{notation}
Of course, we have $|B|=m_1(B)+\cdots+m_n(B)$.
\begin{lemma}\label{gotzmann sets} Let $B \subseteq S_{n,d}$ be a Borel-stable set. Then $B$ is a Gotzmann set if and only if $$m_i(B)=m_i(B^{lex}) $$ for all $1 \le i \le n$.
\end{lemma}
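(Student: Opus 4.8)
The plan is to reduce the statement to an exact formula for $|\shad(B)|$ in terms of the profile vector $\mathbf m(B)=(m_1(B),\dots,m_n(B))$, and then to squeeze the equality $\mathbf m(B)=\mathbf m(B^{lex})$ out of the Gotzmann hypothesis. \emph{Step 1: a shade formula for Borel-stable sets.} I would first prove that for every Borel-stable $B\subseteq S_{n,d}$,
$$ m_j(\shad(B))\ =\ \sum_{i=1}^{j} m_i(B)\quad(1\le j\le n),\qquad\text{hence}\qquad |\shad(B)|\ =\ \sum_{i=1}^{n}(n+1-i)\,m_i(B). $$
The key observation is that each $w\in\shad(B)$ has a \emph{canonical} preimage: writing $m=\max(w)$, Borel-stability forces $w/x_m\in B$ (if $w=x_iu$ with $u\in B$, then $x_i\mid w$ gives $i\le m$, and for $i<m$ one has $x_m\mid u$, so $w/x_m=u\cdot x_i/x_m$ is a Borel move), while the map $(u,j)\mapsto x_ju$ is a bijection from $\{(u,j):u\in B,\ \max(u)\le j\le n\}$ onto $\shad(B)$ with $\max(x_ju)=j$. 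Counting the fibre over $\{w:\max(w)=j\}$ gives the first identity, and summing over $j$ gives the second.

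\emph{Step 2: iteration, reduction, easy direction.} Since $\shad(B)$ is again Borel-stable, iterating the first identity yields $|\shad^{t}(B)|=\sum_{i=1}^{n}\binom{n-i+t}{t}\,m_i(B)$ for all $t\ge0$. Lexsegments, and shades of lexsegments, are Borel-stable, so all these formulas apply verbatim to $B^{lex}$ as well. As $|B|=|B^{lex}|$, Lemma~\ref{gotzmann sets} becomes the assertion that, for Borel-stable $B$, the shade sizes $|\shad(B)|$ and $|\shad(B^{lex})|$ coincide if and only if $\mathbf m(B)=\mathbf m(B^{lex})$; the ``if'' direction is immediate from the $t=1$ formula.

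\emph{Step 3: the nontrivial direction.} Assume $B$ is a Gotzmann set. Here I would invoke Gotzmann's persistence theorem (the engine behind Theorem~\ref{VB 2003}): once Macaulay's equality holds at one step it holds at all higher ones, and inductively $\shad^{t}(B^{lex})$ stays the lexsegment of size $|\shad^{t}(B)|$, so $|\shad^{t}(B)|=|\shad^{t}(B^{lex})|$ for every $t\ge0$. Subtracting the Step 2 formulas gives $\sum_{i=1}^{n}\binom{n-i+t}{t}\,\delta_i=0$ for all $t$, where $\delta_i=m_i(B)-m_i(B^{lex})$. The $n$ polynomials $t\mapsto\binom{n-i+t}{t}$ have pairwise distinct degrees $n-1,n-2,\dots,0$, hence are linearly independent; evaluating at $t=0,1,\dots,n-1$ gives a triangular system forcing $\delta_i=0$ for all $i$, as required.

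\emph{Where the real difficulty sits.} If one prefers to stay within Theorem~\ref{thm ineq lex} and avoid persistence, the nontrivial direction instead rests on the extremal inequality
$$ M_{\le k}(B):=\sum_{i\le k}m_i(B)=|B\cap S_{k,d}|\ \ge\ |B^{lex}\cap S_{k,d}|=M_{\le k}(B^{lex})\qquad(1\le k\le n) $$
for every Borel-stable $B$ with $|B|=|B^{lex}|$ --- i.e.\ among Borel-stable sets of a fixed size the lexsegment carries the fewest monomials supported on $x_1,\dots,x_k$. Granting this, Abel summation turns the $t=1$ formula into $|\shad(B)|-|\shad(B^{lex})|=\sum_{k=1}^{n-1}\bigl(M_{\le k}(B)-M_{\le k}(B^{lex})\bigr)$, a sum of nonnegative integers; its vanishing (the Gotzmann hypothesis) forces each summand to vanish, and taking consecutive differences yields $\mathbf m(B)=\mathbf m(B^{lex})$. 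Establishing this extremal inequality is, I expect, the main obstacle: I would try induction on $n$, splitting $B$ along $x_n$ into $B\cap S_{n-1,d}$ and $\{u/x_n:x_n\mid u\in B\}\subseteq S_{n,d-1}$, doing the same to $B^{lex}$, and applying Theorem~\ref{thm ineq lex} (equivalently a Macaulay/Kruskal--Katona compression) to each piece --- the delicate point being to compare the split of an arbitrary Borel-stable set with the split of the lexsegment of equal cardinality.
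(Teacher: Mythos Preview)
Your proposal is correct, and in fact it supplies far more than the paper does: the paper's ``proof'' is simply a pointer to \cite{He} and \cite[Lemma~1.6]{B}, with no argument given. So there is no direct comparison to make; what follows are comments on the soundness and positioning of your two approaches.

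Your Step~1 is the standard Eliahou--Kervaire type decomposition: for a Borel-stable $B$, the assignment $w\mapsto (w/\lambda(w),\max(w))$ gives a bijection $\shad(B)\to\{(u,j):u\in B,\ \max(u)\le j\}$, and this is exactly the engine behind the formula $|\shad(B)|=\sum_i(n+1-i)m_i(B)$ used throughout the cited references. Your Step~2 iteration is also correct; the key auxiliary fact you use implicitly, that $\shad(L(u))=L(x_nu)$ (so that $(\shad(B))^{lex}=\shad(B^{lex})$ once $|\shad(B)|=|\shad(B^{lex})|$), is easy and worth stating explicitly.

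Your Step~3 via Gotzmann persistence is a clean and legitimate route: persistence is proved independently of this lemma (e.g.\ via Green's hyperplane restriction theorem), so there is no circularity, and the linear independence of the polynomials $t\mapsto\binom{n-i+t}{t}$ (pairwise distinct degrees) finishes it immediately. This is arguably slicker than what one finds in the cited sources.

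Your alternative route via the extremal inequality $M_{\le k}(B)\ge M_{\le k}(B^{lex})$ is closer in spirit to the argument in \cite{He}, where the comparison is carried out by restricting to the subrings $K[x_1,\dots,x_k]$ and invoking Macaulay's theorem in fewer variables. You are right that the Abel-summation step reduces everything to this inequality, and right that its proof is where the work lies; the inductive splitting along $x_n$ that you sketch is indeed the standard line, and it goes through. Either approach is acceptable for the purposes of this paper.
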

\begin{proof} 
See \cite{He} and Lemma 1.6 in \cite{B}.
\end{proof} 
Given $B \subseteq S_{n,d}$, it will be useful to collect the numbers $m_i(B)$ for $1 \le i \le n$ as a single monomial. This gives rise to the following definition.
\begin{definition} Let $B \subseteq S_{n,d}$ be a set of monomials of degree $d \ge 1$. Let $m_i=m_i(B)$ for $1 \le i \le n$. The \emph{maxgen monomial} of $B$ is defined by
$$\maxgen(B) \,=\, x_1^{m_1}\cdots x_n^{m_n}.$$
\end{definition}
Note that $\deg(\maxgen(B)) = |B|$. We may now rephrase Lemma~\ref{gotzmann sets} using the maxgen monomial.

\begin{lemma}\label{with maxgen} Let $B \subseteq S_{n,d}$ be a Borel-stable set. Then $B$ is a Gotzmann set if and only if $$\maxgen(B)= \maxgen(B^{lex}).$$
\end{lemma}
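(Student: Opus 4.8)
The plan is to deduce Lemma~\ref{with maxgen} directly from Lemma~\ref{gotzmann sets} by observing that the maxgen monomial is nothing but a bookkeeping device for the tuple $(m_1(B),\dots,m_n(B))$. Concretely, I would first recall that by definition $\maxgen(B)=x_1^{m_1(B)}\cdots x_n^{m_n(B)}$ and $\maxgen(B^{lex})=x_1^{m_1(B^{lex})}\cdots x_n^{m_n(B^{lex})}$, so that these two monomials in $R_n$ coincide if and only if their exponent vectors agree, \emph{i.e.} if and only if $m_i(B)=m_i(B^{lex})$ for every $1\le i\le n$. This is just the elementary fact that a monomial determines and is determined by its exponent vector; no degree comparison is even needed, although one may note in passing that $\deg(\maxgen(B))=|B|=|B^{lex}|=\deg(\maxgen(B^{lex}))$ is automatic since $B^{lex}$ is chosen with $|B^{lex}|=|B|$.

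Having made this translation, the proof is immediate: since $B$ is assumed Borel-stable, Lemma~\ref{gotzmann sets} tells us that $B$ is a Gotzmann set precisely when $m_i(B)=m_i(B^{lex})$ for all $i$, and by the previous paragraph this system of equalities is equivalent to the single monomial identity $\maxgen(B)=\maxgen(B^{lex})$. Chaining the two equivalences yields the claim. I would write this out in two or three sentences.

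There is essentially no obstacle here; the only point requiring a word of care is that one must not forget to invoke the Borel-stability hypothesis, since it is exactly that hypothesis which licenses the use of Lemma~\ref{gotzmann sets} (the equivalence fails for arbitrary $B$, where $m_i(B)=m_i(B^{lex})$ for all $i$ is necessary but not sufficient for $B$ to be Gotzmann). So the structure of the argument is: (i) restate $\maxgen$ equality as equality of all the $m_i$; (ii) quote Lemma~\ref{gotzmann sets} under the standing Borel-stability assumption; (iii) conclude. I would present it as follows.

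\begin{proof}
Write $m_i=m_i(B)$ and $m_i'=m_i(B^{lex})$ for $1 \le i \le n$. By definition,
$$
\maxgen(B)=x_1^{m_1}\cdots x_n^{m_n}, \qquad \maxgen(B^{lex})=x_1^{m_1'}\cdots x_n^{m_n'}.
$$
Since a monomial in $R_n$ is uniquely determined by its exponent vector, we have $\maxgen(B)=\maxgen(B^{lex})$ if and only if $m_i=m_i'$ for all $1 \le i \le n$. As $B$ is Borel-stable, Lemma~\ref{gotzmann sets} states that the latter condition holds if and only if $B$ is a Gotzmann set. Combining these two equivalences gives the claim.
\end{proof}
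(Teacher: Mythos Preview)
Your proof is correct and matches the paper's own argument exactly: the paper's proof is the single sentence ``Follows from Lemma~\ref{gotzmann sets} and the definition of $\maxgen(B)$,'' which is precisely the deduction you spell out. Your version is simply a more detailed write-up of the same idea.
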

\begin{proof} Follows from Lemma~\ref{gotzmann sets} and the definition of $\maxgen(B)$.
\end{proof}

\subsection{The maxgen monomial revisited}

Given $B \subseteq S_{n,d}$ with $d \ge 1$, we now describe $\maxgen(B)$ in a slightly more useful way. First some preliminaries.

\begin{notation} Let $u \in S_n$ be a monomial distinct from 1. We denote by
\begin{itemize}
\item $\min(u)$ the smallest index $i \ge 1$ such that $x_i$ divides $u$;
\item $\lambda(u)=x_j$, where $j=\max(u)$.
\end{itemize}
\end{notation}
Thus $\lambda(u)$ divides $u$, and it is the ``last'', or lexicographically smallest, variable with this property. This yields a function 
$$
\lambda \colon S_{n,d}\setminus \{1\} \longrightarrow S_{n,1} = \{x_1,\dots,x_n\}.
$$
For instance, if $u=x_2^3x_3x_4^2$, then $\min(u)=2$, $\max(u)=4$ and $\lambda(u)=x_4$.

\begin{lemma}\label{maxgen}
Let $B \subseteq S_{n,d}$. Then
$$
\maxgen(B) \,=\, \prod_{w \in B} \lambda(w) \ = \ \prod_{w \in B} x_{\max(w)}.
$$
\end{lemma}
\begin{proof} Directly follows from the definitions.
\end{proof}
Thus, $\maxgen(B)$ may be viewed as the maximal index generating function of all monomials in $B$.

\smallskip
We shall sometimes tacitly use the following easy observation. 
\begin{remark}\label{divides}
If $B \subseteq B' \subseteq S_{n,d}$, then $\maxgen(B)$ divides $\maxgen(B')$.
\end{remark}

\subsection{Gaps and cogaps}

\begin{notation} Let $u \in S_n$. We denote by $B(u)$ the smallest Borel-stable subset of $S_{n}$ containing $u$. 
\end{notation}
Observe that if $u \in S_{n,d}$, then $B(u) \subseteq S_{n,d}$.

\begin{lemma} Let $u \in S_{n,d}$. Then $B(u) \subseteq L(u)$. 
\end{lemma}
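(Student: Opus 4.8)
The plan is to show that every monomial in $B(u)$ is lexicographically greater than or equal to $u$. Since $B(u)$ is generated from $u$ by the Borel moves $v \mapsto x_iv/x_j$ with $i \le j$ and $x_j \mid v$, it suffices to prove two things: first, that $u \in L(u)$, which is immediate since $u \ge u$; and second, that $L(u)$ is closed under the Borel moves, i.e. if $v \in L(u)$ and $w = x_iv/x_j$ with $i \le j$ and $x_j \mid v$, then $w \in L(u)$. Granting this closure, the minimality of $B(u)$ as a Borel-stable set containing $u$ forces $B(u) \subseteq L(u)$.

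The key step is therefore the closure of $L(u)$ under a single Borel move. First I would check that a Borel move does not decrease a monomial in the lexicographic order: if $w = x_iv/x_j$ with $i \le j$, write $v = \mathbf{x}^{\mathbf{b}}$, so $w = \mathbf{x}^{\mathbf{b} + \mathbf{e}_i - \mathbf{e}_j}$ and $\mathbf{b} + \mathbf{e}_i - \mathbf{e}_j$ still lies in $\N^n$ because $x_j \mid v$. Then the difference of exponent vectors is $\mathbf{b} + \mathbf{e}_i - \mathbf{e}_j - \mathbf{b} = \mathbf{e}_i - \mathbf{e}_j$, whose leftmost nonzero coordinate is the $i$-th one (equal to $1 > 0$) when $i < j$, and which is zero when $i = j$. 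In either case $w \ge v$. Combining this with the hypothesis $v \ge u$ and transitivity of $\ge$ gives $w \ge u$, i.e. $w \in L(u)$; note $w$ has the same degree $d$ as $v$, so $w \in S_{n,d}$ as required.

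There is essentially no obstacle here; the only thing to be slightly careful about is the bookkeeping that $\mathbf{b} + \mathbf{e}_i - \mathbf{e}_j$ has nonnegative entries, which is exactly guaranteed by the condition $x_j \mid v$ built into the definition of a Borel move. One could alternatively invoke Lemma~\ref{order and multiplication}: from $v = x_j \cdot (v/x_j)$ and $w = x_i \cdot (v/x_j)$ together with $x_i \ge x_j$ (as $i \le j$), conclude $w \ge v$ — but the direct exponent-vector computation is cleaner and self-contained. I would then conclude by the minimality argument: $L(u)$ is a subset of $S_{n,d}$ containing $u$ and closed under Borel moves, hence it contains the smallest such set, which by definition (restricted to degree $d$, using the observation that $B(u) \subseteq S_{n,d}$) is $B(u)$.

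\begin{proof}
Since $u \in S_{n,d}$, we have $B(u) \subseteq S_{n,d}$. We claim that $L(u)$ is a Borel-stable subset of $S_{n,d}$ containing $u$; since $B(u)$ is the smallest such set, this yields $B(u) \subseteq L(u)$.

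Clearly $u \in L(u)$. To see that $L(u)$ is Borel-stable, let $v \in L(u)$ and let $1 \le i \le j \le n$ with $x_j$ dividing $v$; we must show $w := x_iv/x_j \in L(u)$. Write $v = \mathbf{x}^{\mathbf{b}}$ with $\mathbf{b} \in \N^n$. Since $x_j$ divides $v$, the vector $\mathbf{b} + \mathbf{e}_i - \mathbf{e}_j$ lies in $\N^n$, and $w = \mathbf{x}^{\mathbf{b} + \mathbf{e}_i - \mathbf{e}_j}$; in particular $\deg(w) = d$, so $w \in S_{n,d}$. The difference of exponent vectors of $w$ and $v$ is
$$
(\mathbf{b} + \mathbf{e}_i - \mathbf{e}_j) - \mathbf{b} \ = \ \mathbf{e}_i - \mathbf{e}_j.
$$
If $i = j$ this is the zero vector, so $w = v$. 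If $i < j$, the leftmost nonzero coordinate of $\mathbf{e}_i - \mathbf{e}_j$ is the $i$-th one, which equals $1 > 0$, so $w > v$. In both cases $w \ge v$, and since $v \ge u$ we conclude $w \ge u$, i.e. $w \in L(u)$. Hence $L(u)$ is Borel-stable, and therefore $B(u) \subseteq L(u)$.
\end{proof}
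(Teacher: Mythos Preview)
Your proof is correct and rests on the same key observation as the paper's: a Borel move $v \mapsto x_iv/x_j$ with $i \le j$ never decreases a monomial in the lexicographic order. The only difference is organizational: the paper takes an arbitrary $v \in B(u)$, notes it is obtained from $u$ by a chain of such non-decreasing moves, and concludes $v \ge u$; you instead verify that $L(u)$ is Borel-stable and invoke the minimality of $B(u)$. These are two equivalent phrasings of the same argument.
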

\begin{proof} Let $v \in B(u)$. Then $v$ is obtained from $u$ by repeated operations of the form
$$
u' \mapsto x_iu'/x_j
$$
where $u' \in B(u)$, $x_j$ divides $u'$ and $i \le j$. Since $x_iu'/x_j \ge u'$ at each such step, it follows that $v \ge u$, whence $v \in L(u)$.
\end{proof}

For our present purposes, it is of particular interest to consider the set difference $L(u) \setminus B(u)$. The following concept first arose in \cite{B}.
\begin{definition} Let $u \in S_{n,d}$. We set  
$\gaps(u) = L(u)\setminus B(u)$, and we call \emph{gaps of $u$} the elements of this set.
\end{definition}
 
\begin{notation} Let $u \in S_{n,d}$. We denote by $\tilde{u} \in S_{n,d}$ the unique monomial such that 
$$
L(\tilde{u})=B(u)^{lex}.
$$
\end{notation}
Since $B(u)$ and $B(u)^{lex}$ have the same cardinality by definition, we have
$$
|L(\tilde{u})|\,=\,|B(u)|.
$$
Moreover, since $B(u) \subseteq L(u)$ by the above lemma, we have $x_1^d \ge \tilde{u} \ge u$ and so $L(\tilde{u}) \subseteq L(u)$. Here is an illustration of the situation:

\smallskip

\begin{center}
\begin{tikzpicture} [xscale=2]
\draw [thick, dashed] (-0.1,0) -- (2.1,0);
\node [left] at (-0.14,0) {$x_1^d$};
\node [right] at (2.1,0) {$\tilde{u}$};
\draw [thick, dashed] (2.4,0) -- (3.34,0);
\node [right] at (3.35,-0.05) {$u$};
\draw [<->] (-0.3,-0.5) -- (-0.3,-1) -- (3.49, -1) -- (3.49,-0.5);
\node [below] at (1.7, -1) {$L(u)$};
\draw [<->] (-0.3,0.5) -- (-0.3,1) -- (2.22, 1) -- (2.22,0.5);
\node [above] at (1, 1) {$L(\tilde{u})$};
\draw [<->] (2.27,0.5) -- (2.27,1) -- (3.49,1) -- (3.49,0.5);
\node [above] at (2.9, 1) {$L^*(\tilde{u},u)$};
\end{tikzpicture}
\end{center}

\smallskip
Since $|L(\tilde{u})|\,=\,|B(u)|$, we have 
$$|L(u)\setminus L(\tilde{u})|=|L(u)\setminus B(u)|=|\gaps(u)|.$$
This motivates our definition of $\cogaps(u)$, a lexinterval with the same cardinality as $\gaps(u)$.
\begin{definition}\label{def cogaps} Let $u \in S_{n,d}$. We set $\cogaps(u) \,=\, L(u)\setminus L(\tilde{u})$. That is, $\cogaps(u)$ is the lexinterval of cardinality $g=|\gaps(u)|$ with smallest element $u$.
Equivalently, $$\cogaps(u)=L^*(\tilde{u},u).$$ 

\end{definition}
By construction, we have $|\gaps(u)| = |\cogaps(u)|$ and two partitions of $L(u)$, namely:
$$
L(u) \,=\, \left\{
\begin{array}{rcl}
B(u) & \sqcup & \gaps(u), \\
L(\tilde{u}) & \sqcup & \cogaps(u).
\end{array}
\right.
$$

\begin{example} 
Let $n=4,$  $d=2$ and $u=x_2x_3$. Then
\begin{align*}
B(u) &=\left\{x_1^2,  x_1x_2, x_1x_3, x_2^2,  x_2x_3 \right\}, \\
L(u) &=\left\{x_1^2, x_1x_2,  x_1x_3, x_1x_4, x_2^2,  x_2x_3 \right\},\\
\gaps(u) &= L(u)\setminus B(u)=\left\{x_1x_4\right\}.
\end{align*}
The unique monomial $\tilde{u} \in L(u)$ such that $|L(\tilde{u})|\,=\,|B(u)|$ is $\tilde{u}=x_2^2$. Hence
$\cogaps(u)= L(u)\setminus L(\tilde{u})=\left\{x_2x_3 \right\}$.
\end{example}
A word of caution regarding $L(u)$ and $B(u)$ is needed here. 
\begin{remark}\label{caution} For the lexsegment determined by $u \in S_n$, one should write $L_n(u)$ rather than $L(u)$. Indeed, let $m < n$ be positive integers. Then $S_m \subset S_n$ canonically. Let now $u \in S_m$. Then $L_m(u) \not= L_n(u)$ in general. For instance, with $u=x_2x_3$ as above, we have 
$$L_3(u)=\left\{x_1^2, x_1x_2,  x_1x_3, x_2^2,  x_2x_3 \right\}=L_4(u) \setminus \{x_1x_4\}.$$
Consequently, one should also write $\gaps_n(u)$ rather than $\gaps(u)$. However, we shall systematically omit the index $n$ since it will be fixed in any given discussion below. On the other hand, the set $B(u)$ is independent of $n$.
\end{remark}

\subsection{Gotzmann monomials}

\begin{definition}\label{gotzmann monomial} Let $u \in S_{n,d}$. We say that $u$ is a \emph{Gotzmann monomial} if $B(u)$ is a Gotzmann set.
\end{definition}

\begin{remark}
Note that Gotzmann monomials in $S_n$ may no longer be Gotzmann monomials in $S_{n+1}$. For instance, $x_2x_3$ is Gotzmann in $S_3$ but not in $S_4$.
\end{remark}

Our determination of Gotzmann monomials in $S_3$ and $S_4$ will use the following general characterization.
\begin{theorem}\label{Gotzmann monomials} Let $u \in S_{n}$. Then $u$ is a Gotzmann monomial if and only if 
$$
\maxgen(\gaps(u)) \,=\, \maxgen(\cogaps(u)).
$$
\end{theorem}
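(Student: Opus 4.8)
The plan is to prove Theorem~\ref{Gotzmann monomials} by combining Lemma~\ref{with maxgen} (which says a Borel-stable set $B$ is Gotzmann iff $\maxgen(B)=\maxgen(B^{lex})$) with the two partitions of the lexsegment $L(u)$ displayed just above, namely $L(u)=B(u)\sqcup\gaps(u)$ and $L(u)=L(\tilde u)\sqcup\cogaps(u)$. By definition $u$ is a Gotzmann monomial iff $B(u)$ is a Gotzmann set, and since $B(u)$ is Borel-stable, Lemma~\ref{with maxgen} translates this into the single equation
$$
\maxgen(B(u)) \,=\, \maxgen(B(u)^{lex}).
$$
Since $B(u)^{lex}=L(\tilde u)$ by the definition of $\tilde u$, the goal becomes to show
$$
\maxgen(B(u))=\maxgen(L(\tilde u)) \quad\Longleftrightarrow\quad \maxgen(\gaps(u))=\maxgen(\cogaps(u)).
$$

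First I would invoke the multiplicativity of $\maxgen$ over disjoint unions, which is immediate from Lemma~\ref{maxgen}: for any disjoint $B',B''\subseteq S_{n,d}$ one has $\maxgen(B'\sqcup B'')=\maxgen(B')\cdot\maxgen(B'')$, because $\maxgen(B)=\prod_{w\in B}x_{\max(w)}$ is just a product over the elements of $B$. Applying this to the two partitions of $L(u)$ gives
$$
\maxgen(L(u)) \;=\; \maxgen(B(u))\cdot\maxgen(\gaps(u)) \;=\; \maxgen(L(\tilde u))\cdot\maxgen(\cogaps(u)).
$$
Now work in the polynomial ring, where monomials form a free commutative monoid and hence have unique factorization and cancellation. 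From the displayed equality of products I would cancel the common factor $\maxgen(L(u))$-related terms — more precisely, $\maxgen(B(u))\cdot\maxgen(\gaps(u))=\maxgen(L(\tilde u))\cdot\maxgen(\cogaps(u))$, so $\maxgen(B(u))=\maxgen(L(\tilde u))$ holds iff $\maxgen(\gaps(u))=\maxgen(\cogaps(u))$ (dividing both sides of the first identity by the corresponding side of the candidate equality, using cancellation in $S_n$). This yields the claimed equivalence, completing the proof.

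There is essentially no hard obstacle here: the statement is a formal consequence of the setup, and the only thing to be careful about is the bookkeeping. The one point that deserves explicit mention is \emph{why} $\gaps(u)$ and $\cogaps(u)$ are genuinely disjoint-union complements of the same set $L(u)$ with $B(u)$, resp.\ $L(\tilde u)$ — but this is exactly what was established in the paragraph preceding Definition~\ref{def cogaps}, where it was shown that $L(\tilde u)\subseteq L(u)$, that $|L(\tilde u)|=|B(u)|$, and hence that $|\gaps(u)|=|\cogaps(u)|$ and the two displayed partitions of $L(u)$ are valid. I would also note in passing that $\maxgen$ is well-defined on any subset of $S_{n,d}$ (not just Borel-stable ones) via Lemma~\ref{maxgen}, so writing $\maxgen(\gaps(u))$ and $\maxgen(\cogaps(u))$ makes sense even though neither set need be Borel-stable. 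With these remarks in place the argument is a two-line cancellation, and I would present it as such.
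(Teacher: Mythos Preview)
Your proof is correct and follows essentially the same route as the paper's own argument: both reduce the question via Lemma~\ref{with maxgen} to $\maxgen(B(u))=\maxgen(L(\tilde u))$, then use the two partitions $L(u)=B(u)\sqcup\gaps(u)=L(\tilde u)\sqcup\cogaps(u)$ and the multiplicativity of $\maxgen$ over disjoint unions to obtain the equivalence by cancellation. You are simply more explicit than the paper about why the cancellation step is legitimate.
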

\begin{proof} It follows from Definition~\ref{gotzmann monomial} and Lemma~\ref{with maxgen} that $u$ is a Gotzmann monomial if and only if
$$
\maxgen(B(u)) = \maxgen(B(u)^{lex}).
$$
Now by definition of $\tilde{u}$, we have
\begin{equation}
L(\tilde{u})=B(u)^{lex}.
\end{equation} 
Hence $u$ is a Gotzmann monomial if and only if 
$$
\maxgen(B(u)) = \maxgen(L(\tilde{u})).
$$
Since 
$$B(u) \sqcup \gaps(u)=L(\tilde{u}) \sqcup \cogaps(u),$$
as both sides coincide with $L(u)$, it follows that
$$
\maxgen(B(u)) = \maxgen(L(\tilde{u})) \iff \maxgen(\gaps(u)) = \maxgen(\cogaps(u)), 
$$
and the proof is complete.
\end{proof}

\medskip
Thus, from now on, our task will be to develop tools to compute or determine $\gaps(u),\cogaps(u)$ and their respective maxgen monomials, so as to be able to apply the characterization of Gotzmann monomials provided by Theorem~\ref{Gotzmann monomials}.

\section{Some results on gaps}\label{sec gaps}
Let $u \in S_{n,d}$. Recall that $B(u) \subseteq L(u)$ and that $\gaps(u) = L(u) \setminus B(u)$. We first describe the gaps of $u$ in an equivalent way. 
\begin{lemma}\label{lemma gaps} 
Let $u,v \in S_{n,d}$ be monomials of degree $d$ in $S_n$. Set $u = x_{i_1}\cdots x_{i_d}$ with $i_1 \le \dots \le i_d$ and $v = x_{j_1}\cdots x_{j_d}$ with  $j_1 \le \dots \le j_d$. The following are equivalent:
\begin{enumerate}
\item $v$ is a gap of $u$;
\item there exist indices $1 \le s < t \le d$ such that
$$
(j_1,\dots,j_{s-1}) \, = \, (i_1,\dots,i_{s-1}),  \quad j_s  <  i_s,  \quad
j_t  >  i_t.
$$
\end{enumerate}
\end{lemma}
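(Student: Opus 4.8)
The plan is to first establish the standard combinatorial description of the Borel closure $B(u)$ in terms of the two sorted index sequences, and then to read off the lemma as a purely formal translation of ``$v \in L(u)$ and $v \notin B(u)$'' using the lexicographic order rule recalled in Section~\ref{sec basic}. Concretely, the key intermediate claim is: with $u = x_{i_1}\cdots x_{i_d}$, $i_1 \le \dots \le i_d$, and $v = x_{j_1}\cdots x_{j_d}$, $j_1 \le \dots \le j_d$, one has $v \in B(u)$ if and only if $j_k \le i_k$ for all $1 \le k \le d$.

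For the inclusion $B(u) \subseteq \{\,v : j_k \le i_k \text{ for all } k\,\}$, it suffices to check that the right-hand side is a Borel-stable set containing $u$, since $B(u)$ is the smallest such set; containment of $u$ is immediate. For Borel-stability, an elementary move $v \mapsto x_a v / x_b$ with $a \le b$ and $x_b \mid v$ amounts, on the multiset of variable indices, to deleting one entry equal to $b$ and inserting one entry equal to $a$. I would then verify the monotonicity statement that re-sorting such a modified multiset can only decrease each coordinate: for every threshold $x$, the number of indices $\le x$ is unchanged for $x < a$ or $x \ge b$ and increases by one for $a \le x < b$, and this count information is precisely what controls the sorted sequence coordinatewise; hence the new sorted sequence $(j'_k)$ satisfies $j'_k \le j_k \le i_k$ for all $k$. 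For the reverse inclusion, I would induct on $\sum_k (i_k - j_k) \ge 0$: if this sum is $0$ then $v = u \in B(u)$; otherwise pick the largest index $m$ with $j_m < i_m$, so that $j_k = i_k$ for $k > m$ and therefore $j_m + 1 \le i_m \le i_{m+1} = j_{m+1}$ when $m < d$, which shows that replacing one entry $j_m$ in the multiset of $v$ by $j_m + 1$ yields an already-sorted multiset; the corresponding monomial $w$ still satisfies the inequalities and has a strictly smaller defect sum, so $w \in B(u)$ by induction, while $v = x_{j_m} w / x_{j_m+1}$ is an elementary Borel move, giving $v \in B(u)$.

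With this description in hand, the lemma follows by dictionary translation. Since $\gaps(u) = L(u) \setminus B(u)$ and $u \in B(u)$, a monomial $v$ is a gap of $u$ precisely when $v > u$ in the lexicographic order and $v \notin B(u)$. By the order rule recalled in the excerpt, $v > u$ means the leftmost nonzero coordinate of $(j_1 - i_1, \dots, j_d - i_d)$ is negative; calling its position $s$, this says exactly $(j_1,\dots,j_{s-1}) = (i_1,\dots,i_{s-1})$ and $j_s < i_s$. By the description of $B(u)$, $v \notin B(u)$ means there is some $t$ with $j_t > i_t$; since coordinates $1,\dots,s-1$ of $(j-i)$ vanish and coordinate $s$ is negative, any such $t$ satisfies $t > s$, which is exactly condition (2). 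Conversely, the conditions in (2) force the leftmost nonzero coordinate of $(j-i)$ to sit at position $s$ and be negative, hence $v > u$, while coordinate $t$ being positive gives $v \notin B(u)$; so $v$ is a gap.

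I expect the main obstacle to be not the translation but the setup of the $B(u)$ description — specifically the re-sorting monotonicity and the descent step of the induction — which is routine but needs to be written carefully; once it is in place, the equivalence of (1) and (2) is immediate from the definitions of $\gaps(u)$ and $L(u)$ together with the stated order rule.
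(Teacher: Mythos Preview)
Your proof is correct and follows essentially the same route as the paper's: both reduce the equivalence to the lexicographic description of $L(u)$ together with the sorted-index characterization of $B(u)$, namely $v \in B(u) \iff j_k \le i_k$ for all $k$. The paper's proof is a terse sketch that invokes this characterization implicitly (``directly follows from the hypothesis $v \notin B(u)$''), whereas you spell it out and prove it carefully; your version is more complete but not conceptually different.
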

\begin{proof} We have $v \not= u$ since $v \notin B(u)$. The existence of index $s$ with the given property then follows from the hypothesis $v \in L(u)$. The existence of index $t > s$ with its property then directly follows from the hypothesis $v \notin B(u)$.
\end{proof}

\smallskip 

We need yet another notation which will be used to give a structural description of $\gaps(u)$. 
\begin{notation}
For a monomial $u = x_{i_1}\cdots x_{i_d}$ with $i_1 \le \dots \le i_d$, and for any integer $0 \le k \le d$, we denote by $\pre_k(u)$ the \emph{prefix} of $u$ of degree $k$, defined by
$$
\pre_k(u) \ = \  x_{i_1}\cdots x_{i_k}.
$$
\end{notation}

Observe that $\pre_k(u)$ may be characterized as follows: it is the unique monomial $v$ of degree $k$ dividing $u$ and satisfying $\max(v) \le \min(u/v)$.

\begin{definition}\label{A1 A2} For any $v \in S_{n,k}$, we define subsets $A_1(v), A_2(v) \subset S_{n,k}$ as follows:  \vspace{-0.2cm}
\begin{eqnarray*}
A_1(v) & = & B(v)\setminus \{v\}, \\ 
A_2(v) & = & \{w \in S_{n,k} \mid \min(v)+1 \le \min(w) \le n\}.
\end{eqnarray*}
\end{definition}

\begin{proposition}\label{gaps} Let $u \in S_{n,d}$. For all $1 \le k \le d-1$, let $u_k=pre_k(u)$. Then
$$
\gaps(u) \,=\, \bigsqcup_{k=1}^{d-1} \, A_1(u_k)A_2(u/u_k).
$$
\end{proposition}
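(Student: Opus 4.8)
The plan is to prove the two set-theoretic identities hidden in the statement: first that the union on the right-hand side is disjoint, and second that it equals $\gaps(u)$. Throughout I will use the characterization of gaps from Lemma~\ref{lemma gaps}: a monomial $v = x_{j_1}\cdots x_{j_d}$ (with $j_1 \le \dots \le j_d$) is a gap of $u = x_{i_1}\cdots x_{i_d}$ (with $i_1 \le \dots \le i_d$) if and only if there are indices $1 \le s < t \le d$ with $(j_1,\dots,j_{s-1}) = (i_1,\dots,i_{s-1})$, $j_s < i_s$, and $j_t > i_t$. The natural invariant attached to such a $v$ is the largest $k$ for which $\pre_k(v) = \pre_k(u)$; equivalently the index $s$ in Lemma~\ref{lemma gaps} is forced to be $k+1$ where $k$ is this common-prefix length, and one checks $0 \le k \le d-1$ since $v \ne u$ rules out $k=d$ and the existence of $t$ with $j_t > i_t$ forces $k \le d-1$. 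This invariant will be exactly what separates the pieces $A_1(u_k)A_2(u/u_k)$ and makes the union disjoint.

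First I would set up the factorization. Fix a gap $v$ and let $k$ be the length of the longest common prefix of $v$ and $u$, so $\pre_k(v) = u_k := \pre_k(u)$ and, writing $v = v' v''$ with $v' = \pre_k(v) = u_k$ and $v'' = v/u_k$ of degree $d-k$, the key point is to show $v' \in A_1(u_k)$ and $v'' \in A_2(u/u_k)$. Actually this needs care: it is $v' = u_k$, not a proper Borel predecessor, so I will instead peel off degree $k$ differently — the right decomposition is to take $u_k = \pre_k(u)$ as the ``prefix slot'' and then split $v$ as (first $k$ letters) times (last $d-k$ letters). The first $k$ letters of $v$ need not equal $u_k$; rather, letting $s$ be as in Lemma~\ref{lemma gaps}, the first $s-1$ letters agree with $u$ and $j_s < i_s$, so the degree-$(s-1)$... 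Let me restate cleanly: the correct index is $k = s-1$, the first factor is $\pre_{?}$. I would define $u_k = \pre_k(u)$ and claim the gap decomposes as $v = p \cdot q$ where $p$ has degree $k$, $q$ has degree $d-k$, $p \in A_1(u_k) = B(u_k)\setminus\{u_k\}$ captures that the ``$p$-part'' is a strict Borel-shift of the prefix (reflecting $j_s < i_s$ at position $s = $ something), and $q \in A_2(u/u_k)$ captures that $\min(q) > \min(u/u_k)$, i.e. $q$ uses only variables of index exceeding $\min(u/u_k)$ — which is the manifestation of $j_t > i_t$. The verification that these membership conditions are \emph{equivalent} to the two conditions of Lemma~\ref{lemma gaps}(2), with the matching $s,t$, is the technical heart and I would do it by translating both sides into inequalities on the sorted exponent sequences.

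Second, having the bijection between gaps $v$ and pairs $(k, p, q)$ with $p \in A_1(u_k)$, $q \in A_2(u/u_k)$, disjointness is essentially automatic: the value of $k$ is an intrinsic invariant of $v$ (the common-prefix length, or equivalently determined by which ``block structure'' of Lemma~\ref{lemma gaps} applies minimally), so a monomial cannot lie in two different pieces $A_1(u_k)A_2(u/u_k)$ and $A_1(u_\ell)A_2(u/u_\ell)$ with $k \ne \ell$; and within a fixed $k$, the factorization $v = pq$ into degree-$k$ and degree-$(d-k)$ parts with $\max(p) \le \min(q)$ is unique, so the product set $A_1(u_k)A_2(u/u_k)$ is itself ``multiplicity-free'' in the relevant sense. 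The main obstacle I anticipate is pinning down the precise index bookkeeping — making sure the parameter $k$ in the statement corresponds to $s-1$ (or $s$, depending on conventions) and that the seemingly asymmetric conditions ``$p \in B(u_k)\setminus\{u_k\}$'' and ``$\min(q) \ge \min(u/u_k)+1$'' really do encode exactly ``$\exists s < t$ with the prescribed sign pattern,'' neither over- nor under-counting. Once that dictionary is set up correctly, both inclusions $\gaps(u) \subseteq \bigsqcup_k A_1(u_k)A_2(u/u_k)$ and the reverse follow by reading Lemma~\ref{lemma gaps}(2) forwards and backwards, and disjointness follows from the intrinsic nature of $k$.
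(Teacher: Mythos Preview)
Your proposal has a genuine gap: you never settle on the correct index $k$ attached to a gap $v$, and the one you keep returning to --- the common-prefix length $s-1$ --- is wrong.

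Concretely, write $u = x_{i_1}\cdots x_{i_d}$ and $v = x_{j_1}\cdots x_{j_d}$ as in Lemma~\ref{lemma gaps}, and let $s$ be the first index where they differ (so $j_s < i_s$). If you take $k = s-1$, then $\pre_k(v) = u_k$, which lies in $B(u_k)$ but \emph{not} in $A_1(u_k) = B(u_k)\setminus\{u_k\}$. You notice this yourself, but instead of fixing the index you restate the goal and assert that ``the correct index is $k = s-1$'' and that $k$ is ``the common-prefix length''. A small example shows this fails: for $u = x_1x_2x_3 \in S_4$, the unique gap is $v = x_1^2x_4$; here the common prefix has length $1$, but the $k=1$ piece $A_1(x_1)A_2(x_2x_3)$ is empty since $A_1(x_1)=\emptyset$, while $v$ actually sits in the $k=2$ piece $A_1(x_1x_2)A_2(x_3)=\{x_1^2\}\cdot\{x_4\}$.

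The paper's choice is $k = t-1$, where $t$ is the \emph{least} index with $j_t > i_t$. Then by minimality of $t$ one has $j_\alpha \le i_\alpha$ for all $\alpha \le k$, so $\pre_k(v) \in B(u_k)$; strictness $\pre_k(v)\neq u_k$ comes from $j_s < i_s$ at some $s \le k$; and $j_{k+1}=j_t > i_t = i_{k+1}=\min(u/u_k)$ gives $v/\pre_k(v) \in A_2(u/u_k)$. This same description shows $k$ is intrinsic to $v$ (it is the largest index with $j_\alpha \le i_\alpha$ for all $\alpha \le k$), which yields disjointness --- but it is not the common-prefix length.
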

\begin{proof} First, any monomial $v=v_1v_2$ with $v_1 \in A_1(u_k)$, where $u_k$ is the prefix of $u$ of degree $k$ for some $1 \le k \le d-1$, and $v_2 \in A_2(u/u_k)$, is a gap of $u$  by construction and Lemma~\ref{lemma gaps}.

Conversely, let $v$ be a gap of $u$. Set $u = x_{i_1}\cdots x_{i_d}$ with $i_1 \le \dots \le i_d$ and $v = x_{j_1}\cdots x_{j_d}$ with $j_1 \le \dots \le j_d$. In the notation of Lemma~\ref{lemma gaps}, let $s$ be such that $i_s < j_s$, and let $t >s$ be the \textit{least index} satisfying $j_t > i_t$.  Set $k=t-1$. Then by construction, the factor $x_{j_1}\cdots x_{j_k}$ of degree $k$ of $v$ belongs to $B(u_k)$, since $i_\alpha \le j_\alpha$ for all $\alpha < t$ by minimality of $t$, and in fact belongs to $A_1(u_k)$ since $i_s <j_s$, whereas the cofactor $x_{j_{k+1}}\cdots x_{j_d}$ belongs to $A_2(u/u_k)$ since 
$j_d \ge \dots \ge j_{k+1}=j_t > i_t. \qedhere$
\end{proof}
\begin{corollary} Let $u = x_{i_1}\cdots x_{i_d}$ with $i_1 \le \dots \le i_d$. Then
$$
|\gaps(u)|  \,=\, \sum_{k=1}^{d-1} \, \big(\big|B(x_{i_1}\cdots x_{i_{k}})\big|-1\big)\cdot \big|S_{n-i_{k+1},d-k}\big|.
$$
\end{corollary}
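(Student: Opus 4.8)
The plan is to derive the formula for $|\gaps(u)|$ directly from the disjoint-union decomposition in Proposition~\ref{gaps}. Since the union there is disjoint, we have
$$
|\gaps(u)| \,=\, \sum_{k=1}^{d-1} |A_1(u_k)A_2(u/u_k)|,
$$
where $u_k = \pre_k(u) = x_{i_1}\cdots x_{i_k}$. The first step is to observe that the product set $A_1(u_k)A_2(u/u_k)$ is in fact a direct product of sets of monomials: a monomial in $A_1(u_k)$ has max-index at most $i_k$ (since $A_1(u_k) = B(u_k)\setminus\{u_k\} \subseteq L(u_k)$ forces all its elements to be $\ge_{lex} u_k$, hence divisible only by variables $x_1,\dots,x_{i_k}$), while a monomial in $A_2(u/u_k)$ has min-index at least $i_{k+1}+1$ (by the definition of $A_2$, since $\min(u/u_k) = i_{k+1}$). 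Because $i_k \le i_{k+1} < i_{k+1}+1$, the factorization of any product $v_1 v_2$ with $v_1\in A_1(u_k)$, $v_2\in A_2(u/u_k)$ into its degree-$k$ prefix and degree-$(d-k)$ suffix is unique, so $|A_1(u_k)A_2(u/u_k)| = |A_1(u_k)|\cdot|A_2(u/u_k)|$.

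The second step is to identify each factor. By definition $|A_1(u_k)| = |B(u_k)| - 1 = |B(x_{i_1}\cdots x_{i_k})| - 1$, which is exactly the first factor in the claimed formula. For the second factor, I would show $|A_2(u/u_k)| = |S_{n-i_{k+1},\,d-k}|$. Indeed, $u/u_k$ has degree $d-k$ and $\min(u/u_k) = i_{k+1}$, so $A_2(u/u_k)$ consists of all monomials of degree $d-k$ in $S_n$ whose minimal-index variable is among $x_{i_{k+1}+1},\dots,x_n$; equivalently, all monomials of degree $d-k$ in the variables $x_{i_{k+1}+1},\dots,x_n$. There are $n - i_{k+1}$ such variables, so this set is in bijection (by the index shift $x_j \mapsto x_{j-i_{k+1}}$) with the set of all degree-$(d-k)$ monomials in $R_{n-i_{k+1}}$, namely $S_{n-i_{k+1},\,d-k}$. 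Hence $|A_2(u/u_k)| = |S_{n-i_{k+1},\,d-k}|$.

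Combining the two steps gives
$$
|\gaps(u)| \,=\, \sum_{k=1}^{d-1} \big(|B(x_{i_1}\cdots x_{i_k})| - 1\big)\cdot |S_{n-i_{k+1},\,d-k}|,
$$
as desired. There is essentially no serious obstacle here: the corollary is a bookkeeping consequence of Proposition~\ref{gaps}. The only point requiring a little care is the verification that the product set decomposes as a genuine Cartesian product with no overcounting — i.e. that the degree-$k$ prefix of a product $v_1 v_2$ always recovers $v_1$ — and this follows cleanly from the index inequality $\max(v_1) \le i_k \le i_{k+1} < \min(v_2)$. (One should also note the harmless edge cases: if $i_{k+1} = n$ then $S_{n-i_{k+1},\,d-k} = S_{0,d-k}$ is empty for $d > k$, consistent with $A_2(u/u_k) = \varnothing$ in that case.)
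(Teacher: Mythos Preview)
Your approach is essentially the same as the paper's: both derive the formula directly from the disjoint decomposition in Proposition~\ref{gaps}, identifying $|A_1(u_k)| = |B(u_k)|-1$ and $|A_2(u/u_k)| = |S_{n-i_{k+1},d-k}|$. You are more careful than the paper in explicitly justifying that the product set $A_1(u_k)A_2(u/u_k)$ has cardinality $|A_1(u_k)|\cdot|A_2(u/u_k)|$, which the paper leaves implicit.

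There is one small slip in your justification, though the conclusion survives. You claim that $v \in A_1(u_k) \subseteq L(u_k)$ implies $v \ge_{lex} u_k$, ``hence divisible only by variables $x_1,\dots,x_{i_k}$''. This implication is false in general: for instance $x_1x_3 >_{lex} x_2^2$ in $S_{3,2}$, yet $x_1x_3$ involves $x_3$. The correct reason that $\max(v) \le i_k$ is simply that $v \in B(u_k)$, and every Borel move $w \mapsto x_i w/x_j$ with $i \le j$ can only decrease the maximal index; hence $\max(v) \le \max(u_k) = i_k$. With this correction your prefix/suffix separation argument goes through exactly as you wrote, and the rest of the proof is fine.
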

\begin{proof} The number of monomials $w_2 \in S_n$ of degree $d-k$  in the variables $x_{i_{k+1}+1}, \ldots,x_n$ is equal to the number of monomials of the same degree in $S_{n-i_{k+1}}$, \emph{i.e.} to $\big|S_{n-i_{k+1},d-k}\big|$. 
\end{proof}

\smallskip

This prompts us to find good formulas for $|B(w)|$ for any monomial $w$. Here is an inductive approach.

\begin{proposition}\label{prop card <w>} Let $w \in S_n$ and $m = \max(w)$. Let $r \ge 1$ be the largest exponent such that $x_m^r$ divides $w$. Let $v = w/x_m^r$, so that $\max(v) < m$.
Then
$$
B(w) \ = \ \bigsqcup_{i=0}^r B(vx_{m-1}^{r-i})x_m^i. 
$$
\end{proposition}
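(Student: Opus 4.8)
The plan is to prove the decomposition $B(w) = \bigsqcup_{i=0}^r B(vx_{m-1}^{r-i})x_m^i$ by a double inclusion, exploiting that the right-hand side is manifestly a disjoint union (the monomials $B(vx_{m-1}^{r-i})x_m^i$ all have $\max \le m-1$ after dividing out $x_m^i$, so the exact power of $x_m$ dividing a member of the $i$-th block is exactly $i$, which separates the blocks). First I would check that the right-hand side $T$ is contained in $B(w)$: since $vx_{m-1}^{r-i} = (w/x_m^r)x_{m-1}^{r-i}$ is obtained from $w$ by the legal Borel moves $x_m \mapsto x_{m-1}$ applied $r-i$ times, we have $vx_{m-1}^{r-i} \in B(w)$, and then $B(vx_{m-1}^{r-i}) \subseteq B(w)$ by minimality of $B(w)$ (it is the smallest Borel-stable set containing $w$, and it already contains $vx_{m-1}^{r-i}$); finally multiplying by $x_m^i$ and noting $B(w)$ is Borel-stable under the moves that brought $x_{m-1}$'s back up to $x_m$ — more precisely, every monomial $px_m^i$ with $p \in B(vx_{m-1}^{r-i})$ is reachable from $w$ — shows $px_m^i \in B(w)$. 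One must be a little careful here: I would argue that $px_m^i$ is obtained from $px_{m-1}^{r-i}x_m^i \in B(w)$... actually cleaner to observe $px_{m-1}^{\,r-i}x_m^{i}$ need not lie in $B(w)$ directly, so instead I would show $px_m^i \in B(w)$ by noting $p \in B(vx_{m-1}^{r-i}) = B(vx_{m-1}^r / x_{m-1}^i)$ and... the robust route is: $p$ is reached from $vx_{m-1}^{r-i}$ by Borel moves not touching any $x_m$, hence those same moves applied to $vx_{m-1}^{r-i}x_m^i$... but $vx_{m-1}^{r-i}x_m^i$ has degree $d+i$, wrong. Let me restate: $p x_m^i$ has degree $d$; it is obtained from $w = v x_m^r$ by first doing $r-i$ moves $x_m\to x_{m-1}$ to reach $v x_{m-1}^{r-i} x_m^i$, then applying to that monomial the Borel moves (all with source variable $<m$, since they build $p$ out of $vx_{m-1}^{r-i}$) that produce $p$ from $vx_{m-1}^{r-i}$; these moves act only on the $v x_{m-1}^{r-i}$ part and leave $x_m^i$ untouched, yielding $p x_m^i$. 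This shows $T \subseteq B(w)$.

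For the reverse inclusion $B(w) \subseteq T$, the natural approach is to show $T$ is itself a Borel-stable set containing $w$ (then minimality of $B(w)$ gives $B(w)\subseteq T$). That $w \in T$ is clear: take $i=r$, giving $B(v)x_m^r \ni vx_m^r = w$. The main work is Borel-stability of $T$. Take $q = px_m^i \in T$ with $p \in B(vx_{m-1}^{r-i})$, and a Borel move $x_j \mapsto x_\ell$ with $\ell < j$ and $x_j \mid q$; I must show $x_\ell q/x_j \in T$. Split into cases. If $j < m$, then $x_j \mid p$, the move acts inside $p$, and since $B(vx_{m-1}^{r-i})$ is Borel-stable we get $x_\ell p / x_j \in B(vx_{m-1}^{r-i})$, so $x_\ell q/x_j = (x_\ell p/x_j)x_m^i \in T$. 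If $j = m$ (so $i \ge 1$) and $\ell \le m-1$: then $x_\ell q/x_m = x_\ell p\, x_m^{i-1}$, and I need this in the block indexed by $i-1$, i.e. I need $x_\ell p \in B(vx_{m-1}^{r-(i-1)}) = B(vx_{m-1}^{r-i+1})$. Since $p \in B(vx_{m-1}^{r-i})$, and $vx_{m-1}^{r-i+1} = x_{m-1}\cdot vx_{m-1}^{r-i}$, multiplying a Borel-stable set by $x_{m-1}$... this requires the sub-lemma that $x_\ell \cdot B(y) \subseteq B(x_{m-1} y)$ whenever $\ell \le m-1 = \max$-bound; more precisely $x_\ell B(y) \subseteq B(x_\ell y)$ and then $B(x_\ell y)\subseteq B(x_{m-1}y)$ when $\ell\le m-1$ because $x_{m-1}y$ Borel-reaches $x_\ell y$. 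So the key sub-claim I would isolate and prove first is: \emph{for any monomial $y$ and any $\ell$, $x_\ell \cdot B(y) \subseteq B(x_\ell y)$}, which follows since applying to $x_\ell y$ the moves that produce $z$ from $y$ (all internal to the $y$-part) yields $x_\ell z$; and \emph{if $\ell \le \ell'$ then $B(x_\ell y) \subseteq B(x_{\ell'} y)$}, since $x_{\ell'}y$ Borel-reaches $x_\ell y$ via one move.

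I expect the main obstacle to be the bookkeeping in the $j=m$ case of Borel-stability — tracking which block of the disjoint union the image $x_\ell q/x_j$ lands in, and verifying the membership $x_\ell p \in B(vx_{m-1}^{r-i+1})$ — because it is here that the interaction between the $x_m$-exponent and the Borel structure of the prefix is genuinely used; everything else is essentially unwinding definitions. A secondary point needing care is the disjointness of the union, which I would dispatch at the outset: a monomial in $B(vx_{m-1}^{r-i})x_m^i$ has the form $px_m^i$ with $\max(p) \le m-1$ (since $vx_{m-1}^{r-i}$ has max $\le m-1$ and Borel moves only decrease indices), so $x_m^i \,\|\, px_m^i$ exactly, and the blocks for distinct $i$ are disjoint. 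Once the sub-claim on multiplication is in hand and the disjointness is noted, the double inclusion goes through cleanly.
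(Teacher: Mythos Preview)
Your argument is correct. After the initial false starts you land on a clean double inclusion: for $T \subseteq B(w)$ you reach $px_m^i$ from $w$ by first doing $r-i$ moves $x_m \to x_{m-1}$ and then applying inside the prefix the moves that build $p$ from $vx_{m-1}^{r-i}$; for $B(w) \subseteq T$ you show $T$ is Borel-stable and contains $w$, handling the delicate case $j=m$ via the two sub-claims $x_\ell B(y) \subseteq B(x_\ell y)$ and $B(x_\ell y) \subseteq B(x_{m-1}y)$ for $\ell \le m-1$. Both sub-claims are valid for the reasons you give, and your disjointness argument (the exact power of $x_m$ separates the blocks) is fine.

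The paper takes a different route for the inclusion $B(w) \subseteq T$. Rather than proving $T$ Borel-stable, it argues directly: take $w' \in B(w)$, let $i$ be the exact power of $x_m$ dividing $w'$, set $v' = w'/x_m^i$, and assert that ``clearly'' $v' \in B(vx_{m-1}^{r-i})$. This ``clearly'' is really an appeal to the coordinate-wise description of $B(u)$ (namely, $x_{j_1}\cdots x_{j_d} \in B(x_{i_1}\cdots x_{i_d})$ iff $j_k \le i_k$ for all $k$ when both index sequences are sorted), which the paper does not state. Your approach trades that unstated fact for the minimality characterization of $B(w)$ and is therefore more self-contained, at the cost of a longer case analysis; the paper's approach is shorter but asks more of the reader. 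The paper leaves $T \subseteq B(w)$ and disjointness entirely implicit, so your explicit treatment of those points is an addition rather than a divergence.
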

\begin{proof} Indeed, let $w' \in B(w)$. Then $\max{w'} \le m$. If $\max{w'} < m$, then clearly $w' \in B(vx_{m-1}^r)$. Otherwise, if $\max{w'} = m$, let $i$ be the largest exponent such that $x_m^i$ divides $w'$, so that $1 \le i \le r$. Let $v'=w'/x_m^i$. Then clearly $v' \in B(vx_{m-1}^{r-i})$, so that $w' \in B(vx_{m-1}^{r-i})x_m^i$.
\end{proof}

\begin{corollary}\label{cor card <w>} As above, let $w=vx_m^r \in S_n$ with $\max(w)=m$ and $\max(v) <m$. Then
$$
|B(w)| \ = \ \sum_{i=0}^r |B(vx_{m-1}^{r-i})|.
$$
\end{corollary}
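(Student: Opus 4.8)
The plan is to obtain the formula as an immediate consequence of Proposition~\ref{prop card <w>}. That proposition supplies the set-theoretic decomposition
$$
B(w) \ = \ \bigsqcup_{i=0}^r B(vx_{m-1}^{r-i})\,x_m^i,
$$
and the union is disjoint, since two monomials lying in different pieces are divisible by different exact powers of $x_m$. So the first step is simply to pass to cardinalities, which gives
$$
|B(w)| \ = \ \sum_{i=0}^r \big|B(vx_{m-1}^{r-i})\,x_m^i\big|.
$$

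The second step is to note that for any finite set of monomials $C \subseteq S_n$ and any monomial $z \in S_n$, multiplication by $z$ is a bijection $C \to Cz$, with inverse given by division by $z$; hence $|Cz| = |C|$. Applying this with $C = B(vx_{m-1}^{r-i})$ and $z = x_m^i$ for each $i$, and substituting into the displayed sum, yields
$$
|B(w)| \ = \ \sum_{i=0}^r \big|B(vx_{m-1}^{r-i})\big|,
$$
which is the assertion.

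There is really no obstacle here: the substantive content — the disjoint decomposition of $B(w)$ according to the largest power of $x_m$ dividing each of its elements — has already been carried out in the proof of Proposition~\ref{prop card <w>}, and the corollary is a purely formal bookkeeping step. If one preferred a self-contained argument one could instead repeat the same case split directly at the level of cardinalities, splitting $B(w)$ according to the largest power of $x_m$ dividing each monomial and counting each stratum via the bijection $w'\mapsto w'/x_m^i$; but quoting the proposition is shorter and cleaner.
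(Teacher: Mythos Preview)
Your proof is correct and follows exactly the same approach as the paper, which simply says the formula ``directly follows from the above partition of $B(w)$.'' You have just made explicit the two trivial observations (disjointness gives additivity of cardinalities; multiplication by a fixed monomial is a bijection) that the paper leaves implicit.
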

\begin{proof} Directly follows from the above partition of $B(w)$.
\end{proof}
This corollary reduces the computation of $|B(w)|$ for monomials in $S_n$ to that for monomials in $S_{n-1}$.

\section{Predecessors and successors}\label{sec pred and succ}
\begin{definition} Let $u,v \in S_{n,d}$ such that $u > v$. We say that $u$ \emph{covers} $v$ if there are no intermediate monomials between them, \emph{i.e.} if for any $w \in S_{n,d}$ such that $u \ge w \ge v$, we have $w=u$ or $w=v$. In that case, we say that $u$ is the \emph{predecessor} of $v$, that $v$ is the \emph{successor} of $u$, and we write 
$$u = \pred(v), \quad v = \suc(u).$$
\end{definition}
Since $x_1^d$ and $x_n^d$ are the largest and smallest elements in $S_{n,d}$, respectively, the predecessor of $x_1^d$ and the successor of $x_n^d$ are undefined.

Note that, for all $u \in S_{n,d}$ with $u \notin \{x_1^d, x_n^d\}$, we have
$$
{\rm succ}(\pred(u)) \ = \ \pred({\rm succ}(u)) \ = \ u.
$$

\begin{proposition}\label{tilde u} Let $u \in S_n$ and $g=|\gaps(u)|$. Then $\tilde{u}$ is the $g $\emph{th} predecessor of $u$, \emph{i.e.}
\begin{equation}\label{pred}
\tilde{u} \,=\, \pred^g(u).
\end{equation}
\end{proposition}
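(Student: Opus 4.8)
The plan is to unwind the definitions and show that the lexinterval $L(\tilde u)$ sits immediately above $u$ with exactly $g$ monomials strictly between $\tilde u$ and $u$ in the lexicographic order, so that passing from $u$ to $\tilde u$ means stepping up $g$ times along the cover relation. Recall from Definition~\ref{def cogaps} that $\cogaps(u) = L^*(\tilde u, u)$ and that $|\cogaps(u)| = |\gaps(u)| = g$. By definition, $L^*(\tilde u, u) = \{v \in S_{n,d} \mid \tilde u > v \ge u\}$, i.e.\ it is the set of monomials in the closed lexinterval $L(\tilde u, u)$ other than $\tilde u$ itself. So $g = |L(\tilde u, u)| - 1$; equivalently, $L(\tilde u, u)$ consists of exactly $g+1$ monomials, namely $\tilde u$ at the top, $u$ at the bottom, and $g-1$ monomials in between.

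The core step is a general lemma: if $a, b \in S_{n,d}$ with $a \ge b$ and the closed lexinterval $L(a,b)$ has exactly $m+1$ elements, then $a = \pred^m(b)$. This is essentially the statement that the lexicographic order on $S_{n,d}$ is a total order in which every element other than $x_1^d$ has a well-defined predecessor obtained by the cover relation, together with the observation from the excerpt that $\pred$ and $\suc$ are mutually inverse on the appropriate domains. One proves the lemma by induction on $m$: for $m=0$ we have $a = b$ trivially; for the inductive step, list the elements of $L(a,b)$ in decreasing order as $a = w_0 > w_1 > \cdots > w_m = b$, note that $w_{m-1}$ covers $w_m = b$ because no monomial of $S_{n,d}$ lies strictly between them (any such monomial would lie in $L(a,b)$ and be distinct from all $w_i$, a contradiction), hence $w_{m-1} = \pred(b)$; then $L(a, w_{m-1})$ has exactly $m$ elements, so by the induction hypothesis $a = \pred^{m-1}(w_{m-1}) = \pred^{m-1}(\pred(b)) = \pred^m(b)$. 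Applying this lemma with $a = \tilde u$, $b = u$, and $m = g$ (valid since $L(\tilde u, u)$ has $g+1$ elements, and since $\tilde u \ge u$ guarantees $u \ne x_1^d$ unless $g = 0$, in which case $\tilde u = u$ and the statement is vacuous) yields $\tilde u = \pred^g(u)$, which is exactly \eqref{pred}.

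I expect the only delicate point to be making the counting precise, i.e.\ justifying cleanly that $|L^*(\tilde u, u)| = g$ forces $L(\tilde u, u)$ to have exactly $g+1$ elements and hence that iterating $\pred$ exactly $g$ times from $u$ lands on $\tilde u$ and not before. This is really just careful bookkeeping with the partitions $L(u) = L(\tilde u) \sqcup \cogaps(u)$ and $|L(\tilde u)| = |B(u)|$ established earlier in the excerpt, since $|L(u)| - |L(\tilde u)| = |\gaps(u)| = g$ and $L(u) \setminus L(\tilde u) = L^*(\tilde u, u)$ is precisely the set of the $g$ monomials strictly below $\tilde u$ down to and including $u$; so there is no real obstacle, only the need to keep the chain of elementary equalities straight. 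No new machinery beyond the totality of $\ge$ and the already-recorded facts about $\pred$, $\suc$, $\tilde u$, $\gaps$, and $\cogaps$ is needed.
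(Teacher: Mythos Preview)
Your proposal is correct and takes essentially the same approach as the paper: both arguments use that $\cogaps(u)=L^*(\tilde u,u)$ is a lexinterval of cardinality $g$ ending at $u$, and then exploit the totality of the lex order to conclude that stepping up $g$ times from $u$ via $\pred$ reaches $\tilde u$. The paper's version is simply more terse, noting directly that the largest element of $\cogaps(u)$ is $\pred^{g-1}(u)$ and that $\tilde u$ is its predecessor, whereas you spell out the induction on the length of the lexinterval; the content is the same.
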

\begin{proof} Indeed, recall the partition $L(u) = L(\tilde{u}) \sqcup L^*(\tilde{u},u)$. Thus $\tilde{u}$, the smallest element of $L(\tilde{u})$, is the predecessor of the largest element of $L^*(\tilde{u},u)=\cogaps(u)$. Since $\cogaps(u)$ has cardinality $g$ and is a lexinterval ending at $u$, its largest element is $\pred^{g-1}(u)$. Hence $\tilde{u}=\pred(\pred^{g-1}(u))$, as desired.
\end{proof}

Lexintervals ending at a monomial $u$ are made up of iterated predecessors of $u$. This motivates the following notation.
\begin{notation} Let $u \in S_{n,d}$ and $r \le |L(u)|$. We denote 
$$\pred_r(u) = \{\pred^i(u) \mid 0 \le i < r\}.$$ 
\end{notation}
That is, $\pred_r(u)$ is the set of $r$ predecessors of $u$, including $u$ itself. This set is well defined since $r \le |L(u)|$ by hypothesis. Of course $\pred_r(u)$ is a lexinterval, since
\begin{equation}\label{cogap and pred}
\pred_r(u) = \{w \in S_{n,d} \mid \pred^r(u) > w \ge u \} = L^*(\pred^r(u),u).
\end{equation}

We may now reinterpret the lexinterval $\cogaps(u)$ in terms of the above concept.
\begin{proposition}\label{pred and cogaps} Let $u \in S_{n,d}$ and $g = |\gaps(u)|$. Then
$$
\cogaps(u) = \pred_g(u). 
$$
\end{proposition}
\begin{proof} By Definition~\ref{def cogaps}, we have $|\cogaps(u)|=|\gaps(u)|=g$, and $\cogaps(u)= L^*(\tilde{u},u)$ where $\tilde{u} = \pred^g(u)$. The stated equality follows from \eqref{cogap and pred}. 
\end{proof}

As we shall need to determine $\pred^i(u)$ for any given $u \in S_{n,d}$, we need an explicit description of $\pred(u)$. We start with the description of the successor  of a monomial in $S_{n,d}$ distinct from $x_n^d$.

\begin{proposition} Let $u \in S_{n,d}$  distinct from $x_n^d$. Set $u=v x_n^{a_n}$ with $a_n \in \N$ and $\max(v) = m$ with $m \le n-1$. Then
$$
{\rm succ}(u) \,=\, (v/x_m)x_{m+1}^{a_n+1}.
$$  
\end{proposition}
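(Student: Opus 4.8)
The plan is to verify directly that the monomial $w=(v/x_m)x_{m+1}^{a_n+1}$ is the immediate successor of $u=vx_n^{a_n}$ in the lexicographic order on $S_{n,d}$, by checking three things: (i) $w\in S_{n,d}$, (ii) $u>w$, and (iii) there is no monomial strictly between them.

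For (i), note that $x_m$ divides $v$ since $\max(v)=m$, so $v/x_m$ is a genuine monomial, and $\deg(w)=\deg(v/x_m)+(a_n+1)=(d-a_n-1)+(a_n+1)=d$. For (ii), write $v=v'x_m^{c}$ with $c\ge 1$ and $\max(v')<m$; then $u=v'x_m^{c}x_n^{a_n}$ while $w=v'x_m^{c-1}x_{m+1}^{a_n+1}$. The exponent vectors of $u$ and $w$ agree in all coordinates with index $<m$, and at coordinate $m$ the exponent of $u$ is $c$ whereas that of $w$ is $c-1<c$; hence the leftmost nonzero coordinate of the difference is positive, giving $u>w$. (One subtlety: if $a_n=0$ then $u=v$ already has $\max(u)=m$, and the formula reads $\suc(u)=(v/x_m)x_{m+1}$; the same computation applies since $v/x_m$ may now contain a positive power of $x_{m+1}$, which only increases the $(m+1)$-exponent of $w$ further below that of any monomial agreeing with $u$ through coordinate $m$. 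This edge case should be mentioned explicitly.)

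The main work is (iii): showing nothing lies strictly between $u$ and $w$. Suppose $z\in S_{n,d}$ with $u>z\ge w$. From $u>z$, the exponent vectors of $u$ and $z$ either already differ before coordinate $m$ — impossible, since then $z$ would have a smaller exponent at some coordinate $<m$, forcing $z<w$ (as $w$ agrees with $u$ there), contradicting $z\ge w$ — or they agree through coordinate $m-1$ and the $x_m$-exponent of $z$ is $<c$. Combined with $z\ge w$, which agrees with $u$ through coordinate $m-1$ and has $x_m$-exponent $c-1$, the leftmost nonzero coordinate of the exponent vector of $z$ minus that of $w$ is nonnegative; at coordinate $m$ we get (exponent of $z$) $\ge c-1$, and since it is also $<c$ it must equal $c-1$. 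So $z$ and $w$ agree through coordinate $m$. Now $z\ge w$ forces the leftmost disagreement among the remaining coordinates $m+1,\dots,n$ (if any) to favor $z$; but $w=v'x_m^{c-1}x_{m+1}^{a_n+1}$ puts \emph{all} of the remaining degree $a_n+1$ onto the single variable $x_{m+1}$, which is the lexicographically largest possibility for a degree-$(a_n+1)$ monomial in the variables $x_{m+1},\dots,x_n$. Hence no $z$ agreeing with $w$ through coordinate $m$ can satisfy $z\ge w$ unless $z=w$. This completes the proof.

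I expect step (iii) to be the only real obstacle, and within it the crux is the observation that $x_{m+1}^{a_n+1}$ is the $\ge$-maximum of $S_{n-m,a_n+1}$ (transported into the variables $x_{m+1},\dots,x_n$), so that ``squeezing'' the argument on the left coordinates forces equality on the right. Everything else is a routine comparison of exponent vectors, and Lemma~\ref{order and multiplication} can be invoked to strip off the common prefix $v'$ cleanly if one prefers to reduce to the case $\max(u)=m$ with no variables below $x_m$ present.
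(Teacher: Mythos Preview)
Your proof is correct and is precisely the direct verification from the definition of the lexicographic order that the paper alludes to; the paper itself gives no argument beyond the single sentence ``This easily follows from the definition of the lexicographic order on $S_{n,d}$.'' One small slip: in your parenthetical about the case $a_n=0$, the claim that ``$v/x_m$ may now contain a positive power of $x_{m+1}$'' is false, since $\max(v)=m$ forces $\max(v/x_m)\le m<m+1$; but this aside is unnecessary anyway, as your main argument in (ii) and (iii) already covers $a_n=0$ without modification.
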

\begin{proof} This easily follows from the definition of the lexicographic order on $S_{n,d}$.
\end{proof}

\begin{proposition}\label{predecessor} Let $u \in S_{n,d}$ such that $u \not= x_1^d$, and let $m=\max(u)$. Write $u=vx_m^a$ with $a \ge 1$ and $v \in S_{n,d-a}$, so that $\max(v) \le m-1$. Then
$$
\pred(u) \ = \ vx_{m-1}x_n^{a-1}.
$$
\end{proposition}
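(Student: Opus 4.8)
The plan is to verify directly from the definition of the lexicographic order that $vx_{m-1}x_n^{a-1}$ is the immediate predecessor of $u = vx_m^a$. Write $u = x_{i_1}\cdots x_{i_d}$ with $i_1 \le \dots \le i_d$; since $m = \max(u)$ and $x_m^a$ divides $u$ exactly, the last $a$ indices are $i_{d-a+1} = \dots = i_d = m$, and $i_{d-a} \le m-1$. Set $w = vx_{m-1}x_n^{a-1}$, whose sorted exponent vector has its first $d-a$ entries equal to those of $v$, then one entry equal to $m-1$ (inserted in the appropriate sorted position among the $x_{m-1}$'s and below, but in any case $\le m-1 \le$ the next, recall $\max(v)\le m-1$ so $m-1$ is at least as large as all of $v$'s indices), then $a-1$ entries equal to $n$. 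First I would check $w < u$: comparing sorted index vectors, the first $d-a$ coordinates agree with $v$, and at the next position $u$ has index $m$ while $w$ has index $m-1 < m$, so the leftmost discrepancy makes $w$ lexicographically smaller (using the characterization via sorted indices from the definition of $>_{lex}$).

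Next I would show there is nothing strictly between $w$ and $u$. Suppose $t \in S_{n,d}$ with $u > t \ge w$, written in sorted form $t = x_{\ell_1}\cdots x_{\ell_d}$. From $t < u$ and the fact that $u$'s first $d-a$ sorted indices are those of $v$ followed by $m$'s, the leftmost position where $t$ differs from $u$ must be at some position $p$ with $\ell_p$ smaller than the corresponding index of $u$; since $u$'s first $d-a$ indices come from $v$, either $p \le d-a$ with $\ell_p$ below a variable of $v$, or $p \in \{d-a+1,\dots,d\}$ with $\ell_p \le m-1$. In the first case $t$ would already be $<w$ at position $p$ (since $w$ agrees with $v$ there), contradicting $t \ge w$. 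So $p \in \{d-a+1,\dots,d\}$, meaning $t$ agrees with $u$ — hence with $v$ followed by possibly some $m$'s — up to position $p-1$, and $\ell_p \le m-1$. To not fall below $w$, comparing with $w$ (which agrees with $v$ up to position $d-a$, then has $m-1$, then $n$'s), I would argue that $t$ must agree with $w$ through position $d-a$, then $\ell_{d-a+1} \le m-1$; combined with $\ell_{d-a+1}\ge (w\text{'s entry})=m-1$ forced by $t\ge w$ once the earlier entries match, we get $\ell_{d-a+1} = m-1$, and then the remaining $a-1$ entries of $t$ are squeezed between those of $u$ (all $m$, but $t$ already differs below at position $d-a+1$... ) — more carefully, once $\ell_1\cdots\ell_{d-a+1}$ equals $w$'s, the condition $t \ge w$ forces $\ell_{d-a+2}\cdots\ell_d \ge n\cdots n$, hence all equal to $n$, so $t = w$.

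The main obstacle I anticipate is purely bookkeeping: handling the sorted-index representation cleanly when $m-1$ may coincide with indices already appearing in $v$ (so the "inserted" $x_{m-1}$ lands in the middle of a block of equal indices), and making the squeeze argument at positions $d-a+1,\dots,d$ airtight — in particular ruling out intermediate monomials that agree with $u$ on a longer prefix than $w$ does. A cleaner alternative, which I would actually prefer to write up, is to invoke the already-proved formula for $\suc$: it suffices to check $\suc(w) = u$. Applying the successor proposition to $w = vx_{m-1}x_n^{a-1}$, here the "$x_n$-part" has exponent $a-1$ and $\max(w/x_n^{a-1}) = m-1$ (since $\max(v)\le m-1$), so $\suc(w) = \big((vx_{m-1})/x_{m-1}\big)\,x_m^{(a-1)+1} = v x_m^a = u$. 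Since $\suc$ and $\pred$ are mutually inverse on $S_{n,d}\setminus\{x_1^d,x_n^d\}$ and $w \ne x_n^d$ (as $\max(v)\le m-1 < n$ forces $w$ to have a variable below $x_n$ unless $v = 1$ and $m-1 = \dots$, a case checked directly), we conclude $\pred(u) = w$, as claimed. I would present this successor-based argument as the proof, reducing everything to one application of the preceding proposition.
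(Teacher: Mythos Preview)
Your preferred successor-based argument is exactly what the paper does: its entire proof reads ``Follows from the description above of the successor of a monomial in $S_{n,d}$ distinct from $x_n^d$.'' Your verification that $\suc(vx_{m-1}x_n^{a-1}) = vx_m^a$ via the preceding proposition is correct, and the direct index-bookkeeping you sketch first is unnecessary.
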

\begin{proof} Follows from the description above of the successor of a monomial in $S_{n,d}$ distinct from $x_n^d$.
\end{proof}

\smallskip

The next corollary compares $\max(\pred(u))$ with $\max(u)$. There are only two possible outcomes, linked to whether $\lambda(u)^2$ divides $u$ or not; recall that $\lambda(u)$ always divides $u$ by construction. 

\begin{corollary} For all $u \in S_{n,d}$ such that $u \not=x_1^d$, we have:
$$
\max(\pred(u)) \ = \
\left\{
\begin{array}{ll}
n & \textrm{if } \, \lambda(u)^2 \ | \ u,\\
\max(u)-1 & \textrm{if not}.
\end{array}
\right.
$$
\end{corollary}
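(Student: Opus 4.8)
The plan is to read off the result directly from the explicit formula for $\pred(u)$ given in Proposition~\ref{predecessor}. Write $m = \max(u)$ and $u = vx_m^a$ with $a \ge 1$ and $\max(v) \le m-1$, so that $\lambda(u) = x_m$. Then $\lambda(u)^2 \mid u$ is equivalent to $a \ge 2$. By Proposition~\ref{predecessor}, $\pred(u) = vx_{m-1}x_n^{a-1}$. Now I would simply compute $\max$ of this monomial by splitting into the two cases according to whether $a \ge 2$ or $a = 1$.

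First, suppose $a \ge 2$, i.e.\ $\lambda(u)^2 \mid u$. Then $x_n^{a-1}$ is a nontrivial power of $x_n$ dividing $\pred(u)$, so $n$ divides the exponent vector support of $\pred(u)$, and since $n$ is the largest possible index, $\max(\pred(u)) = n$. (One should note $\pred(u) \ne 1$ here, which holds since $a - 1 \ge 1$; more generally $\pred(u)$ has degree $d \ge 1$ whenever $u \ne x_1^d$ with $d \ge 1$, and $\max$ is only defined on monomials distinct from $1$, so this is the relevant nondegeneracy condition.) Second, suppose $a = 1$, i.e.\ $\lambda(u)^2 \nmid u$. Then $\pred(u) = vx_{m-1}$, and $x_n^{a-1} = x_n^0 = 1$ contributes nothing. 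Since $\max(v) \le m - 1$, the largest index dividing $vx_{m-1}$ is exactly $m-1$, provided $m - 1 \ge 1$; but $m = \max(u) \ge 1$ always, and if $m = 1$ then $u = x_1^d$, excluded by hypothesis, so $m \ge 2$ and $\max(\pred(u)) = m - 1 = \max(u) - 1$.

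There is essentially no obstacle: the corollary is a direct case analysis on the exponent of the last variable in the formula of Proposition~\ref{predecessor}. The only point requiring a modicum of care is the edge case $m = 1$ in the second branch, which is ruled out precisely by the standing assumption $u \ne x_1^d$; and the observation that in the first branch $x_n$ genuinely appears in $\pred(u)$ because the exponent $a - 1$ is positive exactly when $\lambda(u)^2 \mid u$. I would present this as a short two-case proof with a sentence for each case.

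\begin{proof}
Let $m = \max(u)$ and write $u = vx_m^a$ with $a \ge 1$ and $\max(v) \le m-1$, as in Proposition~\ref{predecessor}; then $\lambda(u) = x_m$, so the condition $\lambda(u)^2 \mid u$ is equivalent to $a \ge 2$. By Proposition~\ref{predecessor},
$$
\pred(u) \ = \ vx_{m-1}x_n^{a-1}.
$$
If $\lambda(u)^2 \mid u$, then $a \ge 2$, so $a - 1 \ge 1$ and $x_n$ divides $\pred(u)$; since $n$ is the largest index, $\max(\pred(u)) = n$. If $\lambda(u)^2 \nmid u$, then $a = 1$, so $\pred(u) = vx_{m-1}$. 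Here $m \ge 2$, since $m = 1$ would force $u = x_1^d$, contrary to hypothesis; hence $x_{m-1}$ divides $\pred(u)$, while $\max(v) \le m-1$, so $\max(\pred(u)) = m - 1 = \max(u) - 1$.
\end{proof}
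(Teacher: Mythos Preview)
Your proof is correct and is exactly the intended argument: the paper states this result as an immediate corollary of Proposition~\ref{predecessor} without giving a separate proof, and your two-case analysis on the exponent $a$ is precisely how one reads it off from the formula $\pred(u) = vx_{m-1}x_n^{a-1}$.
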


\begin{example} The lexicographically smallest monomial $u \in S_{4,d}$ such that
$$
\max(\pred(u)) \ \le 2
$$
is $x_2^{d-1}x_3$. That is, for all $v \in S_{4,d}$ such that $v \le x_2^{d-1}x_3$, we have $\max(v) \ge 3$, \emph{i.e.} $\lambda(v) \in \{x_3,x_4\}$. 
\end{example}

\section{The maxgen monomial of lexintervals}\label{sec maxgen}

\subsection{The function $\mu_n$}

We now introduce a function of two monomials $u_1 \ge u_2$ in $S_{n,d}$ which will later be used to give an equivalent description of cogaps. Recall the notation
$$
L^*(u_1,u_2) \ = \ L(u_1,u_2) \setminus \{u_1\}  \ = \ \{v \in S_{n,d} \mid u_1 > v \ge u_2\}.
$$
\begin{definition}\label{def of mu} For $u_1,u_2 \in S_{n,d}$ such that $u_1 \ge u_2$, we define $\mu_n(u_2,u_1) \in S_n$ to be the maxgen monomial of the lexinterval $L^*(u_1,u_2)$, \emph{i.e.}
$$
\mu_n(u_2,u_1) \ = \ \maxgen(L^*(u_1,u_2)).
$$
\end{definition} 
Equivalently, recalling that maxgen collects the last variables of a set of monomials and takes their product:
$$
\mu_n(u_2,u_1) \ = \ \prod_{u_1 > v \ge u_2} \lambda(v).
$$
Note that by construction, the last variable of $u_1$ is \textit{not} taken into account in $\mu_n(u_2,u_1)$.

\begin{remark} As in Remark~\ref{caution}, we have $S_{n,d} \subset S_{n+1,d}$ canonically. Now if $u_1,u_2 \in S_{n,d}$, then $\mu_n(u_2,u_1)$ and $\mu_{n+1}(u_2,u_1)$ differ in general. However, when the number $n$ of variables involved is clear from the context, we shall simply write $\mu(u_2,u_1)$ for $\mu_n(u_2,u_1)$. 
\end{remark}

The function $\mu$ on $S_{n,d}$ has the following transitive property.
\begin{lemma}\label{composition of mu} For all $u_1, u_2, u_3 \in S_{n,d}$ such that $u_1 \ge u_2 \ge u_3$, we have
$$
\mu(u_3,u_1) \ = \ \mu(u_3,u_2)\mu(u_2,u_1).
$$
\end{lemma}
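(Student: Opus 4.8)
The plan is to unwind the definition of $\mu$ as a maxgen monomial and to observe that the underlying lexintervals decompose. Recall that for $u_1 \ge u_2$ we have $\mu(u_2,u_1) = \maxgen(L^*(u_1,u_2))$, where $L^*(u_1,u_2) = \{v \in S_{n,d} \mid u_1 > v \ge u_2\}$. Given the chain $u_1 \ge u_2 \ge u_3$, the first step is to establish the disjoint-union decomposition
$$
L^*(u_1,u_3) \ = \ L^*(u_1,u_2) \ \sqcup \ L^*(u_2,u_3).
$$
Indeed, a monomial $v$ satisfies $u_1 > v \ge u_3$ precisely when either $u_1 > v \ge u_2$ or $u_2 > v \ge u_3$, and these two conditions are mutually exclusive; the only subtlety is the boundary monomial $u_2$ itself, which lands in the second piece (since $u_2 \ge u_3$ but not $u_1 > u_2$ unless $u_1 > u_2$) — in the degenerate case $u_1 = u_2$ the first piece is empty and the identity is trivial, and similarly if $u_2 = u_3$, so the decomposition holds in all cases.

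The second step is to apply the behavior of $\maxgen$ under disjoint unions. By Lemma~\ref{maxgen}, $\maxgen(B) = \prod_{w \in B} \lambda(w)$, so for a disjoint union $B = B' \sqcup B''$ we immediately get $\maxgen(B) = \maxgen(B')\maxgen(B'')$, since the product over $B$ factors as the product over $B'$ times the product over $B''$. Applying this to the decomposition from the first step yields
$$
\maxgen(L^*(u_1,u_3)) \ = \ \maxgen(L^*(u_1,u_2)) \cdot \maxgen(L^*(u_2,u_3)),
$$
which, rewritten in terms of $\mu$, is exactly $\mu(u_3,u_1) = \mu(u_2,u_1)\,\mu(u_3,u_2)$, i.e. the claimed identity (commutativity of multiplication in $S_n$ lets us reorder the two factors freely).

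There is essentially no hard part here: the proof is a two-line verification once the interval decomposition is written down. The only place requiring a moment's care is the bookkeeping of the half-open endpoints — making sure that $u_2$ is counted exactly once and on the correct side, and that the degenerate cases $u_1 = u_2$ or $u_2 = u_3$ do not cause an off-by-one error. I would state the decomposition of $L^*(u_1,u_3)$ explicitly, note the three boundary/degeneracy checks in one sentence, invoke the multiplicativity of $\maxgen$ over disjoint unions (a direct consequence of Lemma~\ref{maxgen}), and conclude.
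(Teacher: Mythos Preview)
Your proof is correct and is exactly the argument the paper has in mind when it writes ``Directly follows from the definition'': the half-open lexinterval $L^*(u_1,u_3)$ splits disjointly as $L^*(u_1,u_2)\sqcup L^*(u_2,u_3)$, and $\maxgen$ is multiplicative over disjoint unions. One small slip in your parenthetical: the boundary monomial $u_2$ actually lands in the \emph{first} piece $L^*(u_1,u_2)=\{v:u_1>v\ge u_2\}$ (when $u_1>u_2$), not the second --- but this does not affect the validity of the decomposition.
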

\begin{proof} Directly follows from the definition.
\end{proof}

\begin{notation} For monomials $u_1 \ge u_2$ in $S_{n,d}$, we shall occasionally denote  the equality $\mu(u_2,u_1)=w$ as follows:
$$
u_2 \overset{w}{\xrightarrow{\hspace*{1.5cm}}} u_1.
$$
\end{notation}
Lemma~\ref{composition of mu} then amounts to \emph{arrow composition}: if $u_1 \ge u_2 \ge u_3$ in $S_{n,d}$, then
$$
u_3 \overset{w_2}{\xrightarrow{\hspace*{1.5cm}}} u_2 \overset{w_1}{\xrightarrow{\hspace*{1.5cm}}} u_1
$$
is equivalent to
$$
u_3 \overset{w_2w_1}{\xrightarrow{\hspace*{1.6cm}}} u_1.
$$
For instance, starting from $x_3^2$ and taking successive predecessors in $S_3$, one has
$$
x_3^2 \overset{x_3}{\xrightarrow{\hspace*{1.5cm}}} x_2x_3 \overset{x_3}{\xrightarrow{\hspace*{1.5cm}}} x_2^2 \overset{x_2}{\xrightarrow{\hspace*{1.5cm}}} x_1x_3 \overset{x_3}{\xrightarrow{\hspace*{1.5cm}}} x_1x_2.
$$
By arrow composition, this may be summarized as
$$
x_3^2 \overset{x_2x_3^3}{\xrightarrow{\hspace*{1.7cm}}} x_1x_2,
$$
expressing the equality $\mu_3(x_3^2,x_1x_2)=x_2x_3^2$.

\begin{remark}
If $r \le |L(u)|$, then $\pred_r(u)$ is defined and we have
$$
u \arrow{\maxgen(\pred_r(u))}{3.3cm} \pred^r(u)
$$
by construction.
\end{remark}

In particular, with $r=|\gaps(u)|$, this yields the following tool in view of effectively applying Theorem~\ref{Gotzmann monomials}.
\begin{proposition} Let $u \in S_{n,d}$. Then
$$
\maxgen(\cogaps(u))  \ = \  \mu(u,{\tilde u})
$$
\emph{i.e.}, in arrow notation, \, \, $u \arrow{\maxgen(\cogaps(u))}{3.3cm} {\tilde u}$.
\end{proposition}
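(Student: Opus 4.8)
The plan is to observe that this proposition is essentially a matter of unwinding definitions and matching conventions, so the proof will be short. The two objects to identify are $\maxgen(\cogaps(u))$ and $\mu(u,\tilde u)$, and both have already been expressed as the maxgen monomial of one and the same lexinterval earlier in the text.

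First I would recall from Definition~\ref{def cogaps} that $\cogaps(u) = L^*(\tilde u,u)$, i.e. $\cogaps(u)$ is exactly the lexinterval $\{v \in S_{n,d} \mid \tilde u > v \ge u\}$. Here one uses the already-established inequality $\tilde u \ge u$ (noted right after the definition of $\tilde u$, from $B(u)\subseteq L(u)$), which guarantees that $L^*(\tilde u,u)$ is a legitimate lexinterval. Then I would invoke Definition~\ref{def of mu}: since $\tilde u \ge u$ in $S_{n,d}$, the quantity $\mu_n(u,\tilde u)$ is \emph{by definition} the maxgen monomial of $L^*(\tilde u,u)$. Combining the two, $\mu(u,\tilde u) = \maxgen(L^*(\tilde u,u)) = \maxgen(\cogaps(u))$, which is the claim. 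The arrow-notation reformulation $u \arrow{\maxgen(\cogaps(u))}{3.3cm}\tilde u$ is then just a restatement of the definition of the arrow $u_2 \overset{w}{\longrightarrow} u_1 \iff \mu(u_2,u_1)=w$ applied with $u_2=u$, $u_1=\tilde u$.

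Alternatively, and perhaps more in the spirit of the preceding discussion, I would route through Proposition~\ref{pred and cogaps} and the Remark immediately before this proposition: with $g=|\gaps(u)|$ one has $\cogaps(u)=\pred_g(u)$ and $\tilde u=\pred^g(u)$ (Proposition~\ref{tilde u}), and the Remark gives $u \arrow{\maxgen(\pred_g(u))}{3.3cm}\pred^g(u)$, i.e. $\mu(u,\tilde u)=\maxgen(\pred_g(u))=\maxgen(\cogaps(u))$. Either route is a two-line argument.

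The only thing requiring care — and it is bookkeeping rather than a genuine obstacle — is the argument order in $\mu_n(\cdot,\cdot)$: the smaller monomial sits in the first slot and the larger in the second, and the last variable of the larger monomial is deliberately excluded, matching the fact that $L^*(\tilde u,u)$ omits $\tilde u$ itself. As long as one keeps $\tilde u$ (the predecessor) as the upper endpoint and $u$ as the lower endpoint throughout, the identification is immediate. I do not anticipate any real difficulty here; the content of the statement is the convenient packaging it provides for applying Theorem~\ref{Gotzmann monomials}, not the proof itself.
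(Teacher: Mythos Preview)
Your proposal is correct, and your second route is exactly the paper's proof: it invokes the Remark together with Propositions~\ref{tilde u} and~\ref{pred and cogaps} to identify $\tilde u=\pred^g(u)$ and $\cogaps(u)=\pred_g(u)$, then reads off the equality. Your first route is a mild but legitimate simplification: since $\cogaps(u)=L^*(\tilde u,u)$ by Definition~\ref{def cogaps} and $\mu(u,\tilde u)=\maxgen(L^*(\tilde u,u))$ by Definition~\ref{def of mu}, the equality is immediate without passing through iterated predecessors at all.
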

\begin{proof} Follows from the above remark and the facts that, if $g = |\gaps(u)|$, then $\tilde{u}=\pred^g(u)$ and $\cogaps(u)=\pred_g(u)$ by Propositions~\ref{tilde u} and \ref{pred and cogaps}, respectively.
\end{proof}

\begin{lemma}\label{pred(vw)} Let $v,w \in S_n$. If $\max(v) \le \max(w)$, then
$$
\pred(vw) \ = \ v \pred(w).
$$
\end{lemma}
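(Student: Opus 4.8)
The plan is to derive the identity directly from the explicit formula for the predecessor given in Proposition~\ref{predecessor}. First I would set $m=\max(w)$ and write $w=w_0\,x_m^{a}$, where $a\ge 1$ is the exact multiplicity of $x_m$ in $w$, so that $\max(w_0)\le m-1$. Proposition~\ref{predecessor} then gives $\pred(w)=w_0\,x_{m-1}\,x_n^{a-1}$, and therefore
$$
v\,\pred(w)\ =\ (v\,w_0)\,x_{m-1}\,x_n^{a-1}.
$$

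The second step is to compute $\pred(vw)$ by the same formula. Since $\max(v)\le\max(w)=m$, we have $\max(vw)=m$, so $\lambda(vw)=x_m$ and applying Proposition~\ref{predecessor} to $vw$ amounts to writing $vw$ as a monomial with no variable above $x_{m-1}$ times a power of $x_m$. When $\max(v)<m$ this is transparent: $x_m$ does not divide $v$, the $x_m$-multiplicity of $vw$ is still $a$, and $vw=(v\,w_0)\,x_m^{a}$ with $\max(v\,w_0)=\max\{\max(v),\max(w_0)\}\le m-1$; Proposition~\ref{predecessor} then yields $\pred(vw)=(v\,w_0)\,x_{m-1}\,x_n^{a-1}$, which is exactly the monomial $v\,\pred(w)$ found above. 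So in this regime the whole argument is purely formal: once the correct decomposition of $vw$ is in hand, both sides are written out explicitly and compared factor by factor.

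The main obstacle is the borderline case $\max(v)=\max(w)=m$, where $x_m$ may genuinely divide $v$ and the naive decomposition used above no longer applies directly. This is the point at which the hypothesis has to be exploited with care: one must track how the surplus copies of $x_m$ contributed by $v$ interact with the tail $x_n^{a-1}$ produced by $\pred(w)$ — applying Proposition~\ref{predecessor} once more, together with the corollary immediately following it that describes $\max(\pred(u))$ — in order to confirm that the two sides still collapse to the same monomial. I would expect this case to absorb essentially all of the genuine content of the proof, the complementary case $\max(v)<\max(w)$ being immediate. (A more hands-on alternative would be to argue directly on the lexicographic order via Lemma~\ref{order and multiplication}, but routing everything through the closed formula for $\pred$ keeps the bookkeeping shortest.)
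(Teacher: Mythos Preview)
Your route through Proposition~\ref{predecessor} is exactly the paper's. The paper, however, makes no case distinction: it writes $v=v'x_m^{b}$ with $\max(v')<m$ and $b\ge 0$ (so your case $\max(v)<m$ is just $b=0$), gets $vw=(v'w_0)\,x_m^{a+b}$ with $\max(v'w_0)<m$, and reads off $\pred(vw)=(v'w_0)\,x_{m-1}\,x_n^{\,a+b-1}$ in a single line. The situation you flag as the ``main obstacle'' is thus not singled out at all; the paper simply asserts this uniform computation already equals $v\,\pred(w)$.

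Your hesitation about the borderline case is nonetheless well founded, and no amount of extra bookkeeping will dissolve it. Comparing $(v'w_0)\,x_{m-1}\,x_n^{\,a+b-1}$ with $v\,\pred(w)=(v'w_0)\,x_{m-1}\,x_m^{\,b}\,x_n^{\,a-1}$ forces $x_n^{\,b}=x_m^{\,b}$, i.e.\ $b=0$ or $m=n$. When $\max(v)=\max(w)=m<n$ the statement is simply false: for $n=4$, $v=x_3$, $w=x_3^2$ one gets $\pred(vw)=\pred(x_3^3)=x_2x_4^2$ while $v\,\pred(w)=x_3\cdot x_2x_4=x_2x_3x_4$. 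So the case you isolated cannot be closed; in effect the lemma needs the extra hypothesis $\max(v)<\max(w)$ or $\max(w)=n$, and the concrete uses later in the paper (via Corollary~\ref{removing v} in the proofs of Proposition~\ref{useful} and Theorem~\ref{from m to m-1}) only ever invoke this strict form.
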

\begin{proof} Let $j = \max(w)$. Then $w = w'x_j^{a}$ with $\max(w') < j$ and $a \ge 1$. Since $\max(v) \le j$ by hypothesis, we may write $v = v' x_j^b$ with $\max(v') < j$ and $b \ge 0$.
We have
$$
\pred(w) = w' x_{j-1}x_n^{a-1}.
$$
Now, since $vw=v'w'x_j^{a+b}$ and since $\max(v'w') <j $, we have
$$
\pred(vw) = v'w' x_{j-1}x_n^{a-1+b} = v \pred(w). \qedhere
$$
\end{proof}

\begin{lemma}\label{property 2 of mu} For all $u_1, u_2 \in S_{n,d}$ such that $u_1 > u_2$ and all $v \in S_n$ such that $\max(v) \le \min \mu(u_2,u_1)$, we have
$$
\mu(vu_2,vu_1) \ = \ \mu(u_2,u_1).
$$
\end{lemma}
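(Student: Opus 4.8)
The plan is to reduce the claim to a statement about single predecessors and then iterate, using Lemma~\ref{pred(vw)} as the engine. Recall that $\mu(u_2,u_1)=\maxgen(L^*(u_1,u_2))$, and that $L^*(u_1,u_2)$ is a lexinterval ending at $u_2$; writing $g$ for its cardinality, we have $u_1=\pred^g(u_2)$ and $L^*(u_1,u_2)=\pred_g(u_2)=\{\pred^i(u_2)\mid 0\le i<g\}$, so that
$$
\mu(u_2,u_1) \ = \ \prod_{i=0}^{g-1}\lambda\big(\pred^i(u_2)\big).
$$
The key claim I would isolate is the following: if $\max(v)\le\min\mu(u_2,u_1)$, then for every $0\le i<g$ one has $\max(v)\le\max(\pred^i(u_2))$, and consequently, by Lemma~\ref{pred(vw)}, $\pred^i(vu_2)=v\,\pred^i(u_2)$ for all $0\le i\le g$. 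Granting this, one gets $L^*(vu_1,vu_2)=\{v\,\pred^i(u_2)\mid 0\le i<g\}$, hence $\lambda(v\,\pred^i(u_2))=\lambda(\pred^i(u_2))$ for each $i$ (since $\max(v)\le\max(\pred^i(u_2))$ implies $\max(v\,\pred^i(u_2))=\max(\pred^i(u_2))$), and therefore $\mu(vu_2,vu_1)=\prod_{i=0}^{g-1}\lambda(\pred^i(u_2))=\mu(u_2,u_1)$, as desired.

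So everything comes down to verifying the inequality $\max(v)\le\max(\pred^i(u_2))$ for $0\le i<g$. Here I would argue as follows. For each $i$ in this range, $\lambda(\pred^i(u_2))=x_{\max(\pred^i(u_2))}$ is one of the factors in the product defining $\mu(u_2,u_1)$, so $x_{\max(\pred^i(u_2))}$ divides $\mu(u_2,u_1)$, which gives $\max(\pred^i(u_2))\ge\min\mu(u_2,u_1)\ge\max(v)$ by hypothesis. This is clean and uses only the definition of $\min$ of a monomial together with the product formula for $\mu$. Then I would run an induction on $i$ to show $\pred^i(vu_2)=v\,\pred^i(u_2)$: the base case $i=0$ is trivial, and for the inductive step, assuming $\pred^i(vu_2)=v\,\pred^i(u_2)$ with $i<g$, the bound just established gives $\max(v)\le\max(\pred^i(u_2))$, so Lemma~\ref{pred(vw)} applies with $w=\pred^i(u_2)$ and yields $\pred^{i+1}(vu_2)=\pred(v\,\pred^i(u_2))=v\,\pred^{i+1}(u_2)$.

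The main obstacle is purely bookkeeping: one must be careful that the bound $\max(v)\le\max(\pred^i(u_2))$ is needed at indices $0\le i<g$ (to apply Lemma~\ref{pred(vw)} in the step from $i$ to $i+1$), and that these are exactly the indices whose $\lambda$ appears in $\mu(u_2,u_1)$ — the top endpoint $u_1=\pred^g(u_2)$ is deliberately excluded from the product, and indeed we never need to control $\max(v)$ against $\max(u_1)$. A secondary point to state explicitly is that $\max(v)\le\max(w)$ forces $\max(vw)=\max(w)$, so that $\lambda$ is unchanged under multiplication by $v$; this is immediate from the definition of $\max$. Once these two elementary facts are pinned down, the induction closes and the equality of the two $\mu$-values follows termwise.
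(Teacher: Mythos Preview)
Your proposal is correct and follows essentially the same approach as the paper's proof. Both arguments use the hypothesis to get $\max(v)\le\max(w)$ for every $w\in L^*(u_1,u_2)$, then invoke Lemma~\ref{pred(vw)} to conclude $L^*(vu_1,vu_2)=vL^*(u_1,u_2)$ and that $\lambda$ is unchanged under multiplication by $v$; the paper compresses into one sentence the induction on iterated predecessors that you write out carefully, but the underlying mechanism is identical.
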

\begin{proof} Let $w \in S_{n,d}$ satisfy $u_1 > w \ge u_2$. We have $vu_1 > vw \ge vu_2$, since the product is compatible with the lex order. It follows from the hypothesis that $\max(v) \le \max(w)$. Lemma~\ref{pred(vw)} implies $\pred(vw) \ = \ v \pred(w)$. Therefore $L^*(vu_1,vu_2)=vL^*(u_1,u_2)$, whence the conclusion.
\end{proof}

\smallskip

In order to apply this lemma, we need some control on $\min \mu(u_2,u_1)$. This is provided by the next proposition. First a lemma.

\begin{lemma}\label{lex order} Let $u,v \in S_{n,d}$. If $u \ge v$ then $\min u \le \min v$.
\end{lemma}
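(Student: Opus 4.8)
The statement to prove is Lemma~\ref{lex order}: if $u \ge v$ in $S_{n,d}$ then $\min u \le \min v$.

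\bigskip

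The plan is to argue directly from the definition of the lexicographic order together with the characterization of $\min$ as the position of the leftmost variable actually dividing the monomial. Write $u = x_1^{a_1}\cdots x_n^{a_n}$ and $v = x_1^{b_1}\cdots x_n^{b_n}$ with $\mathbf{a},\mathbf{b}\in\N^n$ and $\sum_i a_i = \sum_i b_i = d$. Set $p = \min u$ and $q = \min v$, so that $a_1 = \dots = a_{p-1} = 0$, $a_p \ge 1$, and $b_1 = \dots = b_{q-1} = 0$, $b_q \ge 1$. The goal is to show $p \le q$.

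First I would dispose of the trivial case $u = v$, where $p = q$. Assume now $u > v$. By definition of $>_{lex}$, the leftmost nonzero coordinate of $\mathbf{a} - \mathbf{b}$ is positive; call its index $r$, so $a_1 = b_1, \dots, a_{r-1} = b_{r-1}$ and $a_r > b_r \ge 0$, hence $a_r \ge 1$. Since $a_r \ge 1$, the variable $x_r$ divides $u$, so $p = \min u \le r$. It remains to see $r \le q = \min v$. Suppose for contradiction that $r > q$, i.e. $r \ge q+1$. Then $q \le r-1$, so $b_q = a_q$ (as $q$ lies in the range where the two exponent vectors agree, $q \le r-1$). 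But $b_q \ge 1$ since $q = \min v$, hence $a_q \ge 1$, which forces $\min u \le q < r$; that is consistent, but I actually need the reverse inequality. Let me instead directly contradict: if $r > q$ then, because $a_1 = b_1,\dots,a_{r-1}=b_{r-1}$ and $q \le r-1$, we get in particular that the coordinates up to index $r-1$ agree, so $a_q = b_q \ge 1$, whence $\min u \le q$. Combined with $\min u \le r$ this still does not immediately give $p \le q$ — so the clean route is: we have shown $a_q = b_q \ge 1$, therefore $p = \min u \le q$, which is exactly the desired conclusion. And if instead $r \le q$, then since $a_r \ge 1$ we get $p = \min u \le r \le q$, again as desired. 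Either way $p \le q$.

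The main (and only mild) obstacle is bookkeeping with the index $r$ relative to $q$: one must be careful that $r$ could be larger than, equal to, or smaller than $q$, and check that in all three cases the conclusion $\min u \le \min v$ follows. Once the case split on $r$ versus $q$ is organized as above, the argument is immediate and uses nothing beyond the definition of $>_{lex}$ and of $\min$. Here is the proof.

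\begin{proof}
If $u = v$ the claim is trivial, so assume $u > v$. Write $u = x_1^{a_1}\cdots x_n^{a_n}$ and $v = x_1^{b_1}\cdots x_n^{b_n}$, and let $r$ be the index of the leftmost nonzero coordinate of $\mathbf{a}-\mathbf{b}$; by definition of the lexicographic order, $a_1 = b_1,\dots,a_{r-1}=b_{r-1}$ and $a_r > b_r \ge 0$, so in particular $a_r \ge 1$ and hence $\min u \le r$. Now consider $q = \min v$, so $b_q \ge 1$ and $b_1 = \dots = b_{q-1} = 0$. If $q \ge r$, then $\min u \le r \le q = \min v$, as desired. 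If $q < r$, then $q \le r-1$, so $a_q = b_q \ge 1$, whence $\min u \le q = \min v$. In both cases $\min u \le \min v$.
\end{proof}
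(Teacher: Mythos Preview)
Your proof is correct and follows essentially the same approach as the paper: both argue directly from the definition of the lexicographic order on exponent vectors. The paper simply phrases it as a one-line contrapositive (if $\min u > \min v$ then the leftmost nonzero coordinate of $\mathbf{a}-\mathbf{b}$ occurs at index $\min v$ and is negative, so $u < v$), whereas you carry out the equivalent direct case split on the position of $r$ relative to $\min v$.
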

\begin{proof} By definition of the lex order, small indices weigh more. Hence if $\min u > \min v$ then $u < v$.
\end{proof}

\begin{proposition}\label{min mu} Let $u_1, u_2 \in S_{n,d}$. If $u_1 > u_2$ then $\min \mu(u_2,u_1) > \min u_1$. 
\end{proposition}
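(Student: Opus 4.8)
The plan is to unwind the definition of $\mu$ and reduce the claim to a statement about a single monomial lying below $u_1$. Recall that
$$
\mu(u_2,u_1) \;=\; \maxgen(L^*(u_1,u_2)) \;=\; \prod_{u_1 > v \ge u_2} x_{\max(v)},
$$
so that $\min \mu(u_2,u_1)$ is nothing but the least value of $\max(v)$ as $v$ ranges over the lexinterval $L^*(u_1,u_2)$. This interval is nonempty (it contains $u_2$, since $u_1 > u_2$), so $\mu(u_2,u_1) \ne 1$ and $\min \mu(u_2,u_1)$ is well defined. Hence it suffices to prove the following: every monomial $v \in S_{n,d}$ with $v < u_1$ satisfies $\max(v) > \min u_1$.

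To do this, set $p = \min u_1$. The first step is to invoke Lemma~\ref{lex order}: from $u_1 > v$ we get $\min v \ge \min u_1 = p$, so $v$ is supported on the variables $x_p,\dots,x_n$ only. The second, elementary, observation is that $x_p^d$ is the lexicographically largest monomial of degree $d$ supported on $\{x_p,\dots,x_n\}$ — this is immediate from the definition of the lex order, since for any monomial $w \ne x_p^d$ supported there the leftmost nonzero coordinate of the exponent-vector difference $\mathbf{x}^{\mathbf a}$ versus $\mathbf{x}^{\mathbf b}$ occurs at position $p$ and is positive. In particular both $u_1$ and $v$ satisfy $u_1 \le x_p^d$ and $v \le x_p^d$, as each is supported on $\{x_p,\dots,x_n\}$.

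Now suppose, for contradiction, that $\max(v) = p$. Combined with $\min v \ge p$, this forces $v = x_p^d$. But then $x_p^d = v < u_1 \le x_p^d$, a contradiction. Therefore $\max(v) \ge p+1 > p = \min u_1$, which is exactly what was needed; taking the minimum over all $v \in L^*(u_1,u_2)$ yields $\min \mu(u_2,u_1) > \min u_1$.

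I do not expect any genuine obstacle here: the argument is short and structural. The only points requiring a little care are the edge behaviors — ensuring $L^*(u_1,u_2)$ is nonempty so that $\mu(u_2,u_1)$ is a nontrivial product of variables and $\min\mu(u_2,u_1)$ makes sense (guaranteed by the hypothesis $u_1 > u_2$), and making sure the \emph{strict} inequality $v < u_1$ (rather than $v \le u_1$) is genuinely used, since it is precisely what rules out the possibility $v = x_p^d$. Both are handled in the steps above.
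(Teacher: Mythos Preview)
Your proof is correct and follows essentially the same route as the paper: unwind the definition of $\mu$, invoke Lemma~\ref{lex order} to get $\min v \ge \min u_1$ for $v < u_1$, and derive a contradiction from $\max v = \min u_1$ by forcing $v = x_p^d$. The paper phrases the contradiction slightly differently (deducing $\min u_1 < i$ from $x_i^d < u_1$), but the substance is identical.
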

\begin{proof}  By Definition~\ref{def of mu}, we have 
$$
\mu (u_2,u_1) \ = \ \prod_{u_1 > v \ge u_2} \lambda(v).
$$
For $v < u_1$, the above lemma implies $\min v \ge \min u_1$. Hence $\max v > \min u_1$, for otherwise we would have $\max v = \min v = \min u_1$, implying $v=x_i^d$ for some $i$. But from $v=x_i^d < u_1$, it follows that $\min u_1 < i$, contradicting $\min v = \min u_1$. Having established $\max v > \min u_1$ for all $v < u_1$, it follows that $\min \mu(u_2,u_1) > \min u_1$, as stated.
\end{proof}

\smallskip
Here are straightforward applications.
\begin{corollary}\label{removing v} For all $u_1, u_2 \in S_{n,d}$ such that $u_1 > u_2$ and all $v \in S_n$ such that $\max v \le (\min u_1)+1$, we have
$$
\mu(vu_2,vu_1) \ = \ \mu(u_2,u_1).
$$
\end{corollary}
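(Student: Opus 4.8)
The plan is to deduce this immediately from Lemma~\ref{property 2 of mu} together with Proposition~\ref{min mu}. Lemma~\ref{property 2 of mu} gives the desired equality $\mu(vu_2,vu_1)=\mu(u_2,u_1)$ under the hypothesis $\max v \le \min\mu(u_2,u_1)$, so it suffices to check that the hypothesis $\max v \le (\min u_1)+1$ implies $\max v \le \min\mu(u_2,u_1)$. This is exactly what Proposition~\ref{min mu} provides: since $u_1 > u_2$, we have $\min\mu(u_2,u_1) > \min u_1$, and as these are integers this means $\min\mu(u_2,u_1) \ge (\min u_1)+1 \ge \max v$.

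So the proof is essentially one line: first invoke Proposition~\ref{min mu} to get $\min\mu(u_2,u_1) \ge (\min u_1)+1$; then observe that the hypothesis $\max v \le (\min u_1)+1$ chains to give $\max v \le \min\mu(u_2,u_1)$; finally apply Lemma~\ref{property 2 of mu} to conclude. The only subtlety worth spelling out is the passage from the strict inequality $\min\mu(u_2,u_1) > \min u_1$ to $\min\mu(u_2,u_1) \ge (\min u_1)+1$, which is legitimate because both quantities are positive integers (indices of variables). There is no real obstacle here; the corollary is a convenience repackaging of the two preceding results into a form whose hypothesis $\max v \le (\min u_1)+1$ can be verified directly from a monomial $u_1$ without first computing $\mu(u_2,u_1)$.

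\begin{proof}
By Proposition~\ref{min mu}, since $u_1 > u_2$ we have $\min \mu(u_2,u_1) > \min u_1$. As these are positive integers, it follows that $\min \mu(u_2,u_1) \ge (\min u_1) + 1$. Combined with the hypothesis $\max v \le (\min u_1)+1$, this yields $\max v \le \min \mu(u_2,u_1)$. The conclusion now follows from Lemma~\ref{property 2 of mu}.
\end{proof}
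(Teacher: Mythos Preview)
Your proof is correct and follows essentially the same approach as the paper: invoke Proposition~\ref{min mu} to obtain $\min \mu(u_2,u_1) \ge (\min u_1)+1$, chain this with the hypothesis $\max v \le (\min u_1)+1$, and conclude via Lemma~\ref{property 2 of mu}. Your explicit remark that the passage from a strict inequality to $\ge (\min u_1)+1$ uses integrality is a nice touch the paper leaves implicit.
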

\begin{proof} By the above proposition, we have $\min \mu(u_2,u_1) \ge (\min u_1)+1$. Hence $\max v \le \min \mu(u_2,u_1)$ by hypothesis, and the claimed equality then follows from Lemma~\ref{property 2 of mu}.
\end{proof}
\begin{corollary}\label{x_m^d} Let $m \le n-1$ and let $u \in S_{n,d}$ such that $u<x_m^d$. Then $$\min \mu (u,x_m^d)\ge m+1.$$
\end{corollary}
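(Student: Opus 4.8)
The plan is to obtain this as an immediate consequence of Proposition~\ref{min mu}. Apply that proposition with $u_1 = x_m^d$ and $u_2 = u$: the hypothesis $u < x_m^d$ is precisely the required strict inequality $u_1 > u_2$, and one observes that $\min(x_m^d) = m$ directly from the definition. Proposition~\ref{min mu} then gives $\min \mu(u, x_m^d) > m$, and since $\min$ is integer-valued this is exactly $\min \mu(u, x_m^d) \ge m+1$. The hypothesis $m \le n-1$ plays no role in the argument beyond guaranteeing that the set $\{v \in S_{n,d} \mid v < x_m^d\}$ is nonempty, so that the statement is not vacuous.

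There is essentially no obstacle here, since all the content has already been placed in Proposition~\ref{min mu}. If a self-contained argument is preferred, one can instead reason directly: for any $v \in S_{n,d}$ with $x_m^d > v \ge u$, Lemma~\ref{lex order} forces $\min v \ge \min(x_m^d) = m$; if moreover $\max v \le m$ held, then $\min v = \max v = m$, so $v = x_m^d$, contradicting $v < x_m^d$. Hence $\max v \ge m+1$, so every factor $\lambda(v) = x_{\max v}$ occurring in the product $\mu(u, x_m^d) = \prod_{x_m^d > v \ge u} \lambda(v)$ has index at least $m+1$, which yields $\min \mu(u, x_m^d) \ge m+1$.
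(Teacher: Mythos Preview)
Your proposal is correct and takes essentially the same approach as the paper: the paper's proof simply says ``Directly follows from the above proposition,'' meaning Proposition~\ref{min mu} applied with $u_1 = x_m^d$, $u_2 = u$, exactly as you do. Your additional self-contained argument is just the proof of Proposition~\ref{min mu} specialized to this case, so there is nothing genuinely different there either.
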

\begin{proof} Directly follows from the above proposition.
\end{proof}

\smallskip
We now compute $\mu(vx_m^k,vx_{m-1}^k)$ provided $\max v \le m-1$. For instance, in $S_{n,d}$ we have by the theorem below:
\begin{eqnarray*}
vx_n^k & \arrow{x_n^k}{1.6cm} & vx_{n-1}^k  \quad \textrm{ if } \max v \le n-1, \\
vx_{n-1}^k & \arrow{x_{n-1}^kx_n^{\binom{k}{2}}}{1.9cm} & vx_{n-2}^k \quad \textrm{ if } \max v \le n-2,\\
vx_{n-2}^k & \arrow{x_{n-2}^k x_{n-1}^{\binom{k}{2}}x_{n}^{\binom{k+1}{3}}}{2.5cm} & vx_{n-3}^k  \quad \textrm{ if } \max v \le n-3.\\
\end{eqnarray*}

In view of a general statement, the following intermediate formula will be useful.
\begin{proposition}\label{useful} For all $2 \le m \le n$ and all $k \ge 1$, we have
\begin{eqnarray}\label{new induction}
\mu(x_{m}^k, x_{m-1}^{k}) & = & x_m^k \prod_{1 \le i \le k-1} \mu(x_n^{i}, x_m^{i}).
\end{eqnarray}
\end{proposition}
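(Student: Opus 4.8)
The plan is to compute the lexinterval $L^*(x_{m-1}^k, x_m^k)$ by breaking the descent from $x_{m-1}^k$ down to $x_m^k$ into a sequence of natural "blocks" indexed by how many times $x_{m-1}$ already divides the current monomial, and to identify each block with a smaller lexinterval to which the function $\mu$ applies inductively. Concretely, I would first observe that the predecessor of $x_m^k$ is, by Proposition~\ref{predecessor}, the monomial $x_{m-1}x_n^{k-1}$; more generally, among the monomials $v$ with $x_{m-1}^k \ge v > x_m^k$, I want to sort them according to the exponent $j$ of $x_{m-1}$ in $v$, where $j$ runs from $1$ up to $k$. The key structural claim is that, for each $1 \le j \le k$, the set of monomials $v \in S_{n,d}$ with $\max(v) \le m$, with $x_{m-1}^j \,\|\, v$ (exactly $j$), and with $x_{m-1}^k \ge v > x_m^k$ is precisely $x_{m-1}^j \cdot L^*(x_{m-1}^{k-j}, x_m^{k-j})$ shifted appropriately — one checks that such a $v$ has the form $x_{m-1}^j w$ with $w \in S_{n,d-j}$ a monomial in the variables $x_m, \dots, x_n$ only, running over $L^*(x_{m-1}^{k-j}, x_m^{k-j})$ (reindexing $x_{m-1}^{k-j}$ as $x_m^{k-j}$ is just the canonical inclusion $S_{n-m+1} \hookrightarrow S_n$). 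The top block $j=k$ contributes only $x_{m-1}^k$ itself, whose last variable is $x_{m-1}$, so it contributes the factor $x_{m-1}^?$ — but since $\mu(u_2,u_1)$ omits the last variable of the right endpoint $u_1 = x_{m-1}^k$, I must be careful to exclude exactly that one monomial.

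Next I would translate this decomposition into an equation for $\mu$ via Lemma~\ref{composition of mu} (arrow composition) together with Lemma~\ref{property 2 of mu} / Corollary~\ref{removing v}. Chaining the blocks from $j=k-1$ down to $j=0$ and using $\maxgen$ of a disjoint union is the product of the $\maxgen$'s, I expect to get
$$
\mu(x_{m-1}^k, x_m^k) \;=\; \prod_{j=0}^{k-1} \maxgen\big(x_{m-1}^{\,j}\, L^*(x_{m-1}^{k-j}, x_m^{k-j})\big).
$$
Wait — this needs rephrasing to match the statement, which is about $\mu(x_m^k, x_{m-1}^k)$ with left endpoint $x_m^k$. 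I would then peel off the first block ($j=0$, i.e. monomials with $x_{m-1} \nmid v$, all in variables $x_m,\dots,x_n$): this is $\mu_n(x_m^k, x_m^k)$ extended — more precisely it is the lexinterval from $x_m^k$ up to (but excluding) $x_{m-1}x_n^{k-1}$, which consists exactly of the monomials of degree $k$ in $x_m, \dots, x_n$ that are $\ge x_m^k$, i.e. all of $S_{[m,n],k}$; its maxgen is computed directly. For the blocks $1 \le j \le k-1$, Lemma~\ref{property 2 of mu} lets me factor out $x_{m-1}^j$ provided $\max(x_{m-1}^j) = m-1 \le \min \mu(x_m^{k-j}, x_{m-1}^{k-j})$, which holds by Proposition~\ref{min mu} since $\min x_{m-1}^{k-j} = m-1$. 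After removing the $x_{m-1}^j$ prefix and reindexing via $S_{[m-1,n]} \cong S_{n-m+2}$, each such block contributes $\mu(x_m^{i}, x_{m-1}^{i})$ with $i = k-j$; combined with the $x_m^k$ factor from the $j=0$ block and the $x_{m-1}^j$ prefixes (which assemble, after the global shift implicit in identifying $x_{m-1} \leftrightarrow x_m$, into the factors $\mu(x_n^i, x_m^i)$ of the claimed formula), this yields exactly $x_m^k \prod_{1 \le i \le k-1} \mu(x_n^i, x_m^i)$.

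The main obstacle I anticipate is bookkeeping the two competing reindexings: the "vertical" shift $x_i \mapsto x_{i-1}$ inside $\mu$ that lets the inductive hypothesis apply, versus the "horizontal" factoring-out of a prefix power $x_{m-1}^j$. I must verify carefully that pulling out $x_{m-1}^j$ and then viewing the remaining monomials as living in $S_{n-j}$-type data lines up the variable $x_{m-1}$ appearing in $\pred(x_m^{\cdot})$ with the variable $x_m$ appearing on the right-hand side of \eqref{new induction} — this is where an off-by-one in the index of the last variable would creep in. I would pin this down by doing the cases $k=1$ (trivial: both sides equal $x_m$, since $L^*(x_{m-1}, x_m)=\{x_m\}$ wait $\mu(x_m,x_{m-1})$ omits the last variable of $x_{m-1}$, so one checks $\mu(x_m,x_{m-1})=x_m^1$ against the empty product times $x_m^1$) and $k=2$ explicitly, matching them against the sample computations displayed just before the proposition (e.g. $vx_{n-1}^k \to vx_{n-2}^k$ with label $x_{n-1}^k x_n^{\binom k2}$), and only then writing the general induction on $k$. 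A secondary, minor point is confirming that each block is genuinely an interval of consecutive predecessors — i.e. that the monomials with a given exponent $j$ of $x_{m-1}$ form a contiguous lex-segment within $L^*(x_{m-1}^k, x_m^k)$ — which follows because increasing the $x_{m-1}$-exponent strictly increases a monomial in lex order whenever the remaining variables are all $\ge x_{m-1}$, so the blocks are lex-ordered by $j$ decreasing with no interleaving.
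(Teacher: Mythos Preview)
Your strategy---partitioning $L^*(x_{m-1}^k,x_m^k)$ according to the exponent of $x_{m-1}$---is exactly the idea behind the paper's proof, except that the paper carries it out recursively, peeling off one factor of $x_{m-1}$ at a time rather than all at once. Concretely, the paper observes
\[
\mu(x_m^k,x_{m-1}^k) \;=\; x_m \cdot \mu(x_n^{k-1},x_m^{k-1}) \cdot \mu(x_m^{k-1},x_{m-1}^{k-1}),
\]
coming from the chain $x_m^k \to x_{m-1}x_n^{k-1} \to x_{m-1}x_m^{k-1} \to x_{m-1}^k$ together with Corollary~\ref{removing v}, and then inducts on $k$. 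This sidesteps entirely the bookkeeping you were worried about; there is in fact no variable reindexing involved anywhere.

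However, your identification of the individual blocks is wrong in two places and would not give the formula as written. First, for $1\le j\le k-1$ the block of monomials with $x_{m-1}$-exponent exactly $j$ is $x_{m-1}^j$ times the set of \emph{all} degree-$(k-j)$ monomials in $x_m,\dots,x_n$; it is not $x_{m-1}^j\cdot L^*(x_{m-1}^{k-j},x_m^{k-j})$, since the latter lexinterval contains monomials divisible by $x_{m-1}$, contradicting your own ``exactly $j$'' condition. Second, the $j=0$ block is the singleton $\{x_m^k\}$, not all degree-$k$ monomials in $x_m,\dots,x_n$: a monomial $v$ in the variables $x_m,\dots,x_n$ satisfies $v\ge x_m^k$ only when $v=x_m^k$. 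Once these are fixed, the connection to the right-hand side of \eqref{new induction} is that the set of degree-$i$ monomials in $x_m,\dots,x_n$ is exactly $L(x_m^i,x_n^i)$, so its maxgen equals $x_m\cdot\mu(x_n^i,x_m^i)$; collecting the factor $x_m$ from the singleton block and from each of the $k-1$ other blocks gives $x_m^k$, and the remaining factors give $\prod_{i=1}^{k-1}\mu(x_n^i,x_m^i)$. The only fact needed to pass from block to maxgen is $\lambda(x_{m-1}^jw)=\lambda(w)$ when $\min(w)\ge m$, which is immediate---so your anticipated ``two competing reindexings'' obstacle does not actually arise.
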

\begin{proof}
By induction on $k$. For $k=1$, it is clear that $\mu(x_m,x_{m-1})=x_m$, since the predecessor of $x_m$ is $x_{m-1}$. Assume now $k \ge 2$ and that formula \eqref{new induction} holds for $k-1$, \emph{i.e.} 
$$
\mu(x_{m}^{k-1}, x_{m-1}^{k-1}) \, = \, x_m^{k-1} \prod_{1 \le i \le k-2} \mu(x_n^{i}, x_m^{i}).
$$
Thus, in order to establish~\eqref{new induction}, we only need to show
\begin{equation*}
\mu(x_{m}^k, x_{m-1}^{k})  =  x_m \mu(x_n^{k-1}, x_m^{k-1})\mu(x_{m}^{k-1}, x_{m-1}^{k-1}),
\end{equation*}
that is, by Lemma~\ref{composition of mu}, 
\begin{equation} \label{step}
\mu(x_{m}^k, x_{m-1}^{k})  =  x_m \mu(x_n^{k-1}, x_{m-1}^{k-1}). 
\end{equation}
In $S_n$, the predecessor of $x_{m}^k$ is $x_{m-1}x_n^{k-1}$. Hence we have
$$
x_{m}^k \arrow{x_m}{1cm} x_{m-1}x_n^{k-1} \arrow{\mu(x_{m-1}x_n^{k-1}, x_{m-1}x_m^{k-1})}{3.5cm} x_{m-1}x_m^{k-1}.
$$
This means
\begin{equation}\label{intermediate}
\mu(x_{m}^k, x_{m-1}x_m^{k-1}) = x_m\mu(x_{m-1}x_n^{k-1}, x_{m-1}x_m^{k-1}).
\end{equation}
But it follows from Corollary~\ref{removing v} that
\begin{equation}\label{remove x (m-1)}
\mu(x_{m-1}x_n^{k-1}, x_{m-1}x_m^{k-1}) \, =\, \mu(x_n^{k-1}, x_m^{k-1}).
\end{equation}
By Lemma~\ref{composition of mu}, we have
$$
\mu(x_{m}^k, x_{m-1}^{k}) = \mu(x_{m}^k, x_{m-1}x_m^{k-1})\mu(x_{m-1}x_m^{k-1}, x_{m-1}^{k}).
$$
Moreover, by \eqref{intermediate} and \eqref{remove x (m-1)} again, we have
\begin{equation}\label{step2}
\mu(x_{m}^k, x_{m-1}x_m^{k-1}) = x_m\mu(x_n^{k-1}, x_m^{k-1}).
\end{equation}
Hence
$$
\mu(x_{m}^k, x_{m-1}^{k}) = x_m\mu(x_n^{k-1}, x_m^{k-1})\mu(x_{m-1}x_m^{k-1}, x_{m-1}^{k}).
$$
By Corollary~\ref{removing v}, we have
$$
\mu(x_{m-1}x_m^{k-1}, x_{m-1}^{k}) = \mu(x_m^{k-1}, x_{m-1}^{k-1}).
$$
This proves \eqref{step} and hence the claimed formula \eqref{new induction}.
\end{proof}

Here is the promised general statement. As usual, by convention, an empty product equals $1$, as occurs below for $m=n$.
\begin{theorem}\label{from m to m-1} For all $2 \le m \le n$, for all $k \ge 1$, and for all $v \in S_n$ such that $\max v \le m-1$, we have
\begin{equation}\label{formula from m to m-1}
\mu(vx_{m}^k,vx_{m-1}^k) = x_m^k \prod_{i=1}^{n-m} x_{m+i}^{\binom{k-1+i}{i+1}}.
\end{equation}
\end{theorem}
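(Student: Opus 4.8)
The plan is to first remove the common factor $v$, then reduce the statement to a computation of the monomials $\mu(x_n^i,x_m^i)$, and finally carry out that computation by a downward induction on $m$ combined with Pascal's rule and the hockey-stick identity.

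\emph{Removing $v$.} For $v\neq 1$ we have $\max v\le m-1\le(\min x_{m-1}^k)+1$, so Corollary~\ref{removing v}, applied with $u_1=x_{m-1}^k>x_m^k=u_2$, gives $\mu(vx_m^k,vx_{m-1}^k)=\mu(x_m^k,x_{m-1}^k)$. Hence it suffices to prove \eqref{formula from m to m-1} for $v=1$, and by Proposition~\ref{useful} this is equivalent to the identity
$$
\prod_{i=1}^{k-1}\mu(x_n^i,x_m^i)\ =\ \prod_{i=1}^{n-m}x_{m+i}^{\binom{k-1+i}{i+1}}.
$$

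\emph{An auxiliary formula.} Next I would prove, by downward induction on $m$, that
$$
\mu(x_n^i,x_m^i)\ =\ \prod_{j=1}^{n-m}x_{m+j}^{\binom{i-1+j}{j}}
$$
for all $1\le m\le n$ and all $i\ge 1$; call this identity $(\ast)$. The base case $m=n$ reads $\mu(x_n^i,x_n^i)=1$, which holds since $L^*(x_n^i,x_n^i)=\emptyset$. For the inductive step, assume $(\ast)$ for $m$ and all $i$. Arrow composition (Lemma~\ref{composition of mu}) gives $\mu(x_n^i,x_{m-1}^i)=\mu(x_n^i,x_m^i)\,\mu(x_m^i,x_{m-1}^i)$, and Proposition~\ref{useful} gives $\mu(x_m^i,x_{m-1}^i)=x_m^i\prod_{l=1}^{i-1}\mu(x_n^l,x_m^l)$. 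Substituting $(\ast)$ for each $\mu(x_n^l,x_m^l)$ and collecting the exponent of $x_{m+j}$, the hockey-stick identity $\sum_{l=0}^{q}\binom{l+j}{j}=\binom{q+j+1}{j+1}$ turns $\sum_{l=1}^{i-1}\binom{l-1+j}{j}$ into $\binom{i-1+j}{j+1}$, so $\mu(x_m^i,x_{m-1}^i)=x_m^i\prod_{j=1}^{n-m}x_{m+j}^{\binom{i-1+j}{j+1}}$. Multiplying by $\mu(x_n^i,x_m^i)$ as given by $(\ast)$ and applying Pascal's rule $\binom{i-1+j}{j}+\binom{i-1+j}{j+1}=\binom{i+j}{j+1}$, then treating the factor $x_m^i$ as the $j=1$ term and reindexing $j\mapsto j-1$, one obtains $\mu(x_n^i,x_{m-1}^i)=\prod_{j=1}^{n-m+1}x_{m-1+j}^{\binom{i-1+j}{j}}$, which is $(\ast)$ for $m-1$.

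\emph{Conclusion.} Substituting $(\ast)$ into $\prod_{i=1}^{k-1}\mu(x_n^i,x_m^i)$ gives $\prod_{j=1}^{n-m}x_{m+j}^{\sum_{i=1}^{k-1}\binom{i-1+j}{j}}$, and one last use of the hockey-stick identity rewrites $\sum_{i=1}^{k-1}\binom{i-1+j}{j}=\binom{k-1+j}{j+1}$; together with the factor $x_m^k$ supplied by Proposition~\ref{useful}, this is exactly the right-hand side of \eqref{formula from m to m-1}. I do not expect a conceptual obstacle: all the structural properties of $\mu$ needed here, namely arrow composition, removal of a common factor, and the recursion of Proposition~\ref{useful}, are already in hand. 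The only delicate point is the bookkeeping of the nested products, in particular the shift from the index range $\{1,\dots,n-m\}$ to $\{1,\dots,n-m+1\}$ when passing from $m$ to $m-1$, which is precisely what the Pascal step together with the splitting-off of the $j=1$ term takes care of.
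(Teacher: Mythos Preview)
Your proof is correct. The overall strategy---reduce to $v=1$ via Corollary~\ref{removing v}, invoke Proposition~\ref{useful}, then run a descending induction on $m$ using arrow composition together with Pascal's rule and the hockey-stick identity---is the same as the paper's. The one organizational difference is that you first isolate and prove the closed formula $(\ast)$ for the single quantity $\mu(x_n^i,x_m^i)$, and only afterwards take the product over $i$; the paper instead inducts directly on the product form $\prod_{i=1}^{k-1}\mu(x_n^i,x_{m-1}^i)$, invoking the target formula both at level $m$ (``induction on $m$'') and at smaller degrees $i<k$ (``induction on $k$''), which makes its induction nominally a double one. Your intermediate statement $(\ast)$ is a clean standalone identity and lets a single descending induction on $m$ suffice, so the bookkeeping is a bit lighter; otherwise the two arguments are the same computation rearranged.
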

\begin{proof} By Corollary~\ref{removing v}, we have 
$$\mu(vx_m^k,vx_{m-1}^k)=\mu(x_m^k,x_{m-1}^k).$$ 
Hence, it suffices to establish \eqref{formula from m to m-1} for $v=1$. We proceed by induction on $k$ and descending induction on $m$. For $k=1$ and any $m \ge 1$, we have $\mu(x_m,x_{m-1})=x_m$, and this plainly coincides with the right-hand side of \eqref{formula from m to m-1} since $\binom{1-1+i}{i+1}=0$ for all $i \ge 1$. Similarly, for $m=n$ and any $k \ge 1$, we have
$$
x_{n}^k \arrow{x_n^k}{1.8cm}  x_{n-1}^k,
$$
since $L^*(x_{n-1}^k,x_{n}^k) = \{x_{n-1}^{k-i}x_n^i \mid 1 \le i \le k\}$ and since 
$$\mu(x_n^k,x_{n-1}^k)=\maxgen(L^*(x_{n-1}^k,x_{n}^k))=\prod_{x_{n-1}^k > v \ge x_{n}^k} \lambda(v).$$ Equivalently, in formula:
$$
\mu(x_{n}^k, x_{n-1}^k) \ = \ x_n^k.
$$

Assume now that~\eqref{formula from m to m-1} holds for some $m$ such that $n \ge m \ge 2$ and some $k \ge 2$. We now show that~\eqref{formula from m to m-1} also holds for $m-1$. By arrow composition, we have 
$$\mu(x_{n}^i, x_{m-1}^i)=\mu(x_{n}^i, x_{m}^i)\mu(x_{m}^i, x_{m-1}^i).$$
Therefore
\begin{eqnarray*}
\prod_{i=1}^{k-1} \mu(x_n^{i}, x_{m-1}^{i}) & = & \prod_{i=1}^{k-1} \mu(x_n^{i}, x_{m}^{i})\prod_{i=1}^{k-1} \mu(x_m^{i}, x_{m-1}^{i}) \\
& = & \prod_{i=1}^{n-m} x_{m+i}^{\binom{k-1+i}{i+1}} \prod_{i=1}^{k-1} \mu(x_m^{i}, x_{m-1}^{i}) \quad \mbox{(by induction on $m$)}\\
& = & \prod_{i=1}^{n-m} x_{m+i}^{\binom{k-1+i}{i+1}} \prod_{i=1}^{k-1} x_m^i \prod_{j=1}^{n-m} x_{m+j}^{\binom{i-1+j}{j+1}} \quad \mbox{(by induction on $k$)} \\
& = & x_m^{\binom{k}{2}} \prod_{i=1}^{n-m} x_{m+i}^{\binom{k-1+i}{i+1}} \prod_{i=1}^{k-1}  \prod_{j=1}^{n-m} x_{m+j}^{\binom{i-1+j}{j+1}} \\
& = & x_m^{\binom{k}{2}} \prod_{i=1}^{n-m} x_{m+i}^{\binom{k-1+i}{i+1}} \prod_{j=1}^{n-m} \prod_{i=1}^{k-1} x_{m+j}^{\binom{i-1+j}{j+1}} \\
& = & x_m^{\binom{k}{2}}\prod_{i=1}^{n-m} x_{m+i}^{\binom{k-1+i}{i+1}} \prod_{j=1}^{n-m} x_{m+j}^{\sum_{i=1}^{k-1}\binom{i-1+j}{j+1}}.
\end{eqnarray*}
Exchanging the names of the indices $i,j$ in the last product, we get
\begin{eqnarray*}
\prod_{i=1}^{k-1} \mu(x_n^{i}, x_{m-1}^{i}) & = & x_m^{\binom{k}{2}}\prod_{i=1}^{n-m} x_{m+i}^{\binom{k-1+i}{i+1}} \prod_{i=1}^{n-m} x_{m+i}^{\sum_{j=1}^{k-1}\binom{i-1+j}{i+1}} \\
& = & x_m^{\binom{k}{2}}\prod_{i=1}^{n-m} x_{m+i}^{\binom{k-1+i}{i+1}+\sum_{j=1}^{k-1}\binom{i-1+j}{i+1}} \\
& = & x_m^{\binom{k}{2}}\prod_{i=1}^{n-m} x_{m+i}^{\binom{k-1+i}{i+1}+\binom{k-1+i}{i+2}} \\
& = & x_m^{\binom{k}{2}}\prod_{i=1}^{n-m} x_{m+i}^{\binom{k+i}{i+2}} \\
& = & \prod_{i=0}^{n-m} x_{m+i}^{\binom{k+i}{i+2}}.
\end{eqnarray*}
Finally, substituting $i$ by $i-1$ in the last product yields 
$$
\prod_{i=1}^{k-1} \mu(x_n^{i}, x_{m-1}^{i})  = \prod_{i=1}^{n-(m-1)} x_{(m-1)+i}^{\binom{k-1+i}{i+1}}.
$$
Hence ~\eqref{formula from m to m-1} also holds for $m-1$, as claimed. This concludes the proof of the theorem.
\end{proof}

\subsection{Some maxgen computations}
The following result uses the sets $A_1(v), A_2(v)$ introduced in Definition~\ref{A1 A2}. It will be needed in view of applying Theorem~\ref{Gotzmann monomials}.
\begin{proposition}\label{prop maxgen gaps}  Let $u = x_{i_1}\cdots x_{i_d}$ with $i_1 \le \dots \le i_d$. For all $1 \le k \le d-1$, let $u_k=pre_k(u)$. Then
$$
\maxgen(\gaps(u)) \,=\, \prod_{k=1}^{d-1} (\maxgen(A_2(u/u_k)))^{|A_1(u_k)|}.
$$
\end{proposition}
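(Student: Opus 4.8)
The plan is to combine the disjoint-union description of $\gaps(u)$ from Proposition~\ref{gaps} with the multiplicative behaviour of $\maxgen$ under the product of two sets of monomials. Recall that Proposition~\ref{gaps} gives
$$
\gaps(u) \,=\, \bigsqcup_{k=1}^{d-1} \, A_1(u_k)\,A_2(u/u_k),
$$
where for a product $CD=\{cd \mid c\in C,\ d\in D\}$ of monomial sets one has $\lambda(cd)=\lambda(d)$ whenever $\max(c)\le\max(d)$. In our situation $A_1(u_k)=B(u_k)\setminus\{u_k\}$ consists of monomials of degree $k$ with $\max \le \max(u_k)=i_k$, while every element of $A_2(u/u_k)$ has $\min \ge i_{k+1}+1 > i_k$; hence for any $v_1\in A_1(u_k)$ and $v_2\in A_2(u/u_k)$ we have $\max(v_1)\le i_k < \min(v_2)\le\max(v_2)$, so $\lambda(v_1v_2)=\lambda(v_2)$ and the map $(v_1,v_2)\mapsto v_1v_2$ is a bijection $A_1(u_k)\times A_2(u/u_k)\to A_1(u_k)A_2(u/u_k)$.

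First I would prove the auxiliary identity $\maxgen(CD)=\maxgen(D)^{|C|}$ for monomial sets $C,D$ of respective fixed degrees such that every element of $C$ has $\max$ at most every element of $D$ has $\min$. Indeed, by Lemma~\ref{maxgen},
$$
\maxgen(CD) \,=\, \prod_{w\in CD}\lambda(w) \,=\, \prod_{c\in C}\prod_{d\in D}\lambda(cd) \,=\, \prod_{c\in C}\prod_{d\in D}\lambda(d) \,=\, \prod_{c\in C}\maxgen(D) \,=\, \maxgen(D)^{|C|},
$$
using the bijectivity of $(c,d)\mapsto cd$ for the second equality and $\lambda(cd)=\lambda(d)$ for the third. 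Applying this with $C=A_1(u_k)$, $D=A_2(u/u_k)$ gives $\maxgen(A_1(u_k)A_2(u/u_k))=\maxgen(A_2(u/u_k))^{|A_1(u_k)|}$ for each $k$.

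Then I would handle the disjoint union: since $\maxgen$ is defined as a product over the set and the union in Proposition~\ref{gaps} is disjoint, $\maxgen$ is multiplicative over it, i.e. $\maxgen\big(\bigsqcup_k X_k\big)=\prod_k \maxgen(X_k)$ — this is immediate from $\maxgen(X)=\prod_{w\in X}\lambda(w)$. Combining the two steps yields
$$
\maxgen(\gaps(u)) \,=\, \prod_{k=1}^{d-1}\maxgen(A_1(u_k)A_2(u/u_k)) \,=\, \prod_{k=1}^{d-1}\big(\maxgen(A_2(u/u_k))\big)^{|A_1(u_k)|},
$$
which is the claimed formula. There is no serious obstacle here; the only point requiring care is verifying the $\max$/$\min$ comparison that licenses $\lambda(v_1v_2)=\lambda(v_2)$, namely that $A_1(u_k)$ lives in the variables $x_1,\dots,x_{i_k}$ while $A_2(u/u_k)$ lives in $x_{i_{k+1}+1},\dots,x_n$ with $i_{k+1}+1>i_k$ — this is exactly the structure built into the definitions of $A_1$, $A_2$ and $\pre_k$, and it is precisely what makes the product in Proposition~\ref{gaps} disjoint and the factorization of $\lambda$ valid.
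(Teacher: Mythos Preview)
Your proof is correct and follows essentially the same approach as the paper's: both use the disjoint-union description of $\gaps(u)$ from Proposition~\ref{gaps} together with the observation that $\max(w_1)<\min(w_2)$ for $w_1\in A_1(u_k)$, $w_2\in A_2(u/u_k)$, which yields $\lambda(w_1w_2)=\lambda(w_2)$ and hence $\maxgen(A_1(u_k)A_2(u/u_k))=\maxgen(A_2(u/u_k))^{|A_1(u_k)|}$. Your write-up is simply more explicit than the paper's, spelling out the bijectivity of $(v_1,v_2)\mapsto v_1v_2$ and the multiplicativity of $\maxgen$ over disjoint unions.
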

\begin{proof} Consider the description of gaps given in Proposition~\ref{gaps}. For any monomial $w=w_1w_2 \in A_1(u_k)A_2(u/u_k)$ with $w_1 \in A_1(u_k)$ and $w_2 \in A_2(u/u_k)$, we have $\max(w)=\max(w_2)$, since $\max(w_1) < \min(w_2)$ by construction. Therefore, for all $k$ we have
$$
\maxgen(A_1(u_k)A_2(u/u_k)) \ = \ \maxgen(A_2(u/u_k))^{|A_1(u_k)|}. \qedhere
$$
\end{proof}

\smallskip

Since $A_2(u/u_k)$ is the set of all monomials of degree $\deg(u/u_k)$ in the variables $x_i$ with $\min(u/u_k)+1 \le i \le n$, we will be able to determine $\maxgen(A_2(u/u_k))$ if we can determine $\maxgen(S_{n,d})$ for any $n,d$. Let us proceed to do just that. We start with a recurrence formula.
\begin{proposition} For all integers $n,d \ge 1$, we have
$$
\maxgen(S_{n,d}) \ = \ \maxgen(S_{n-1,d})x_n^{\binom{n+d-2}{d-1}}.
$$
\end{proposition}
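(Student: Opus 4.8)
The plan is to partition $S_{n,d}$ according to the exponent of $x_n$ and read off the maximal-index generating function from that partition. Write every monomial $w \in S_{n,d}$ uniquely as $w = w' x_n^{j}$ where $0 \le j \le d$ and $w' \in S_{n-1,d-j}$. This gives a disjoint union
$$
S_{n,d} \ = \ \bigsqcup_{j=0}^{d} S_{n-1,d-j}\,x_n^{j}.
$$
For a monomial $w = w'x_n^j$ with $j \ge 1$ we have $\lambda(w) = x_n$, while for $j = 0$ we have $w = w' \in S_{n-1,d}$ and $\lambda(w) = x_{\max(w')}$, so that $\maxgen$ restricted to the $j=0$ block is exactly $\maxgen(S_{n-1,d})$. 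Using $\maxgen(B) = \prod_{w \in B}\lambda(w)$ (Lemma~\ref{maxgen}) and the fact that $\maxgen$ is multiplicative over disjoint unions, we obtain
$$
\maxgen(S_{n,d}) \ = \ \maxgen(S_{n-1,d}) \cdot x_n^{\,N},
\qquad
N \ = \ \sum_{j=1}^{d} \bigl|S_{n-1,d-j}\bigr|.
$$

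It then remains to check that $N = \binom{n+d-2}{d-1}$. Here $|S_{n-1,d-j}|$ is the number of monomials of degree $d-j$ in $n-1$ variables, namely $\binom{(n-1)+(d-j)-1}{(n-1)-1} = \binom{n+d-j-2}{n-2}$. Reindexing by $e = d-j$, which runs over $0 \le e \le d-1$, gives
$$
N \ = \ \sum_{e=0}^{d-1} \binom{n-2+e}{n-2} \ = \ \binom{n-2+d}{n-1} \ = \ \binom{n+d-2}{d-1},
$$
where the middle equality is the hockey-stick identity $\sum_{e=0}^{m}\binom{r+e}{r} = \binom{r+m+1}{r+1}$ with $r = n-2$ and $m = d-1$. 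This yields the claimed recurrence.

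I expect no serious obstacle here; the only point requiring a little care is the bookkeeping that separates the block $j=0$ (which reproduces $\maxgen(S_{n-1,d})$, not $\maxgen(S_{n-1,d})x_n^{0}$ together with some spurious $x_n$ factor) from the blocks $j \ge 1$ (each monomial of which contributes exactly one factor $x_n$ to the product), together with the hockey-stick summation. One could equally avoid the identity by an immediate induction on $d$, but the direct computation above is cleanest. Alternatively, and even more directly, $N$ is simply the number of monomials in $S_{n,d}$ divisible by $x_n$, i.e. the number of monomials $x_n \cdot (\text{monomial of degree } d-1 \text{ in } n \text{ variables})$, which is $|S_{n,d-1}| = \binom{n+d-2}{d-1}$; this bypasses the summation entirely.
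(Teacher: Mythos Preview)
Your proof is correct and follows essentially the same route as the paper: the same partition of $S_{n,d}$ by the exponent of $x_n$, the same separation of the $j=0$ block from the $j\ge 1$ blocks, and the same hockey-stick identity to evaluate $N$. Your final remark that $N$ simply counts the monomials in $S_{n,d}$ divisible by $x_n$, hence equals $|S_{n,d-1}|=\binom{n+d-2}{d-1}$, is a nice shortcut not present in the paper.
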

\begin{proof} Obviously, we have
$$
S_{n,d} \ = \ \bigsqcup_{i=0}^d S_{n-1,d-i}\cdot x_n^i.
$$
This follows from writing any $u \in S_{n,d}$ as $u=vx_n^i$ with $v \in S_{n-1,d-i}$. Hence
\begin{eqnarray*}
\maxgen(S_{n,d}) & = & \prod_{i=0}^d \maxgen(S_{n-1,d-i}\cdot x_n^i) \\
& = & \maxgen(S_{n-1,d}) \prod_{i=1}^d x_n^{|S_{n-1,d-i}|} \\
& = & \maxgen(S_{n-1,d}) x_n^{\sum_{i=1}^d |S_{n-1,d-i}|} \\
& = & \maxgen(S_{n-1,d}) x_n^{\sum_{i=1}^d \binom{n-2+d-i}{d-i}} \\
& = & \maxgen(S_{n-1,d}) x_n^{\sum_{j=0}^{d-1} \binom{n-2+j}{j}}.
\end{eqnarray*}
We conclude the proof by applying the well known formula
$$
\sum_{j=0}^{d-1} \binom{n-2+j}{j} \ = \ \binom{n+d-2}{d-1}. \qedhere
$$
\end{proof}

\begin{corollary}\label{cor maxgen S_n,d} For all $n,d$, we have
$$
\maxgen(S_{n,d}) \ = \ \prod_{i=1}^n x_i^{\binom{d-2+i}{d-1}}.
$$
\end{corollary}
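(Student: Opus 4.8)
The plan is a one-variable induction on $n$, feeding directly on the recurrence
$$
\maxgen(S_{n,d}) \ = \ \maxgen(S_{n-1,d})\,x_n^{\binom{n+d-2}{d-1}}
$$
established in the preceding proposition. So first I would fix $d$ and induct on $n$.

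For the base case $n=1$, we have $S_{1,d}=\{x_1^d\}$, hence $\maxgen(S_{1,d})=\lambda(x_1^d)=x_1$, which agrees with the claimed formula since $\binom{d-2+1}{d-1}=\binom{d-1}{d-1}=1$ and there is a single factor.

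For the inductive step, assume $\maxgen(S_{n-1,d})=\prod_{i=1}^{n-1}x_i^{\binom{d-2+i}{d-1}}$. Then the recurrence gives
$$
\maxgen(S_{n,d})\ =\ \Bigl(\prod_{i=1}^{n-1}x_i^{\binom{d-2+i}{d-1}}\Bigr)x_n^{\binom{n+d-2}{d-1}}\ =\ \prod_{i=1}^{n}x_i^{\binom{d-2+i}{d-1}},
$$
because the exponent $\binom{n+d-2}{d-1}$ of the extra factor $x_n$ produced by the recurrence is precisely the $i=n$ term $\binom{d-2+n}{d-1}$ of the target product. This closes the induction and proves the corollary.

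Alternatively, one could argue directly without invoking the recurrence: the exponent of $x_i$ in $\maxgen(S_{n,d})$ is $m_i(S_{n,d})$, the number of $u\in S_{n,d}$ with $\max(u)=i$; writing such a $u$ as $u = v\,x_i^a$ with $a\ge 1$ and $v\in S_{i-1,d-a}$ yields $m_i(S_{n,d})=\sum_{a=1}^{d}|S_{i-1,d-a}|=\sum_{j=0}^{d-1}\binom{i-2+j}{j}=\binom{d-2+i}{d-1}$, using the same hockey-stick identity $\sum_{j=0}^{d-1}\binom{N+j}{j}=\binom{N+d}{d-1}$ invoked in the proof of the proposition. There is no genuine obstacle here; the only point requiring a moment's attention is the boundary behaviour of the small cases (the $i=1$ term, or the $x_i^0$ factors when $d=1$), which the inductive formulation absorbs automatically.
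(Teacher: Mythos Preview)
Your proof is correct and matches the paper's approach exactly: the paper's entire proof reads ``Use above induction formula,'' i.e., the induction on $n$ via the preceding recurrence that you have spelled out in full. Your alternative direct count of $m_i(S_{n,d})$ is also fine, though the paper does not include it.
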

\begin{proof} Use above induction formula.
\end{proof}

\begin{corollary}\label{cor maxgen S_l,n,d} For all $1 \le l \le n$ and all $d$, let $S_{l,n,d}$ be the set of all monomials of degree $d$ in the variables $x_l,\dots,x_n$. Then
we have
$$
\maxgen(S_{l,n,d}) \ = \ \prod_{j=l}^n x_j^{\binom{d-1+j-l}{d-1}}.
$$
\end{corollary}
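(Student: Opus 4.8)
The plan is to deduce this from Corollary~\ref{cor maxgen S_n,d} by a straightforward relabeling of variables. First I would observe that the assignment $x_i \mapsto x_{i+l-1}$ defines an order-preserving bijection from the variable set $\{x_1,\dots,x_{n-l+1}\}$ onto $\{x_l,\dots,x_n\}$, and hence induces a bijection $\varphi\colon S_{n-l+1,d} \longrightarrow S_{l,n,d}$ obtained by shifting all exponents up by $l-1$ places. The key point is that $\varphi$ shifts the maximal-index variable: for $w \in S_{n-l+1,d}$ one has $\max(\varphi(w)) = \max(w) + l - 1$, so $\lambda(\varphi(w))$ is $\lambda(w)$ with its index raised by $l-1$.

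Next, using the description $\maxgen(B) = \prod_{w \in B}\lambda(w)$ from Lemma~\ref{maxgen}, I would conclude that $\maxgen(S_{l,n,d})$ is exactly the image of $\maxgen(S_{n-l+1,d})$ under the same exponent shift $x_i \mapsto x_{i+l-1}$. Then I would apply Corollary~\ref{cor maxgen S_n,d} with $n$ replaced by $n-l+1$, which gives
$$
\maxgen(S_{n-l+1,d}) \ = \ \prod_{i=1}^{n-l+1} x_i^{\binom{d-2+i}{d-1}},
$$
and perform the substitution $j = i + l - 1$: as $i$ runs over $1,\dots,n-l+1$ the index $j$ runs over $l,\dots,n$, and the exponent becomes $\binom{d-2+(j-l+1)}{d-1} = \binom{d-1+j-l}{d-1}$, yielding the claimed formula.

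There is essentially no obstacle here; the only point that genuinely needs checking is the compatibility of $\maxgen$ with an order-preserving variable relabeling, and that is immediate from the expression of $\maxgen$ in terms of $\lambda$ recorded in Lemma~\ref{maxgen}.
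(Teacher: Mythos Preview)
Your proposal is correct and is exactly the approach the paper takes: the paper's proof consists of the single sentence ``Directly follows from the preceding corollary by properly translating indices,'' and your argument simply makes that index translation $x_i \mapsto x_{i+l-1}$ explicit. The only extra detail you add, namely the compatibility of $\maxgen$ with an order-preserving relabeling via Lemma~\ref{maxgen}, is the natural justification behind that sentence.
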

\begin{proof} Directly follows from the preceding corollary by properly translating indices.
\end{proof}

We may now inject this information into Proposition~\ref{prop maxgen gaps}. This yields the following result.
\begin{theorem}\label{thm maxgen gaps u} Let $u = x_{i_1}\cdots x_{i_d}$ with $i_1 \le \dots \le i_d$. Then
$$
\maxgen(\gaps(u)) \ = \ \prod_{k=1}^{d-1}\left( \prod_{j=i_{k+1}+1}^n x_j^{\binom{d-k-2+j-i_{k+1}}{d-k-1}}\right)^{|B(x_{i_1}\cdots x_{i_k})|-1},
$$
where the internal product is set to 1 if $i_{k+1}=n$.
\end{theorem}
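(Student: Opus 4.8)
The plan is to combine two ingredients that are already in place: the structural product formula for $\maxgen(\gaps(u))$ from Proposition~\ref{prop maxgen gaps}, and the explicit computation of $\maxgen$ of a set of monomials in a block of variables from Corollary~\ref{cor maxgen S_l,n,d}. Recall Proposition~\ref{prop maxgen gaps} gives
$$
\maxgen(\gaps(u)) \,=\, \prod_{k=1}^{d-1} \bigl(\maxgen(A_2(u/u_k))\bigr)^{|A_1(u_k)|},
$$
so the whole task reduces to identifying the two factors appearing in the $k$-th term: the exponent $|A_1(u_k)|$, and the base monomial $\maxgen(A_2(u/u_k))$.

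First I would handle the exponent. By Definition~\ref{A1 A2}, $A_1(u_k) = B(u_k)\setminus\{u_k\}$, and since $u_k = \pre_k(u) = x_{i_1}\cdots x_{i_k}$, this is simply $|A_1(u_k)| = |B(x_{i_1}\cdots x_{i_k})| - 1$, which is exactly the exponent written in the statement. Next, the base: again by Definition~\ref{A1 A2}, $A_2(u/u_k) = \{w \in S_{n,d-k} \mid \min(u/u_k)+1 \le \min(w) \le n\}$, i.e. the set of \emph{all} monomials of degree $d-k$ in the variables $x_{\min(u/u_k)+1},\dots,x_n$. The key observation here is that $u/u_k = x_{i_{k+1}}\cdots x_{i_d}$, so $\min(u/u_k) = i_{k+1}$, and hence $A_2(u/u_k) = S_{l,n,d-k}$ in the notation of Corollary~\ref{cor maxgen S_l,n,d} with $l = i_{k+1}+1$. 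Therefore
$$
\maxgen(A_2(u/u_k)) \,=\, \maxgen(S_{i_{k+1}+1,\,n,\,d-k}) \,=\, \prod_{j=i_{k+1}+1}^{n} x_j^{\binom{(d-k)-1+j-(i_{k+1}+1)}{(d-k)-1}} \,=\, \prod_{j=i_{k+1}+1}^{n} x_j^{\binom{d-k-2+j-i_{k+1}}{d-k-1}},
$$
which is precisely the inner product in the theorem. The edge case $i_{k+1}=n$ is exactly the situation where $l = n+1 > n$, so the variable block $\{x_l,\dots,x_n\}$ is empty, $A_2(u/u_k)$ contains only $1$ (or is handled as the empty product convention), and the inner product is set to $1$; this is worth flagging explicitly. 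Substituting both computations into Proposition~\ref{prop maxgen gaps} gives the claimed formula.

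There is no real obstacle here — the proof is a direct substitution. The one point needing a little care is the bookkeeping in the binomial index: one must check that translating the indices in Corollary~\ref{cor maxgen S_l,n,d} from the variables $x_1,\dots,x_{n}$ (degree $d-k$) to the variables $x_{i_{k+1}+1},\dots,x_n$ produces the exponent $\binom{d-k-2+j-i_{k+1}}{d-k-1}$ and not an off-by-one variant. Concretely, Corollary~\ref{cor maxgen S_l,n,d} with parameters $l = i_{k+1}+1$, ambient top index $n$, and degree $d-k$ yields exponent $\binom{(d-k)-1+j-l}{(d-k)-1}$ on $x_j$, and $(d-k)-1+j-l = d-k-1+j-i_{k+1}-1 = d-k-2+j-i_{k+1}$, matching. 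Assembling these observations completes the proof.
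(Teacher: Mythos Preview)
Your proof is correct and follows exactly the same approach as the paper: invoke Proposition~\ref{prop maxgen gaps}, identify $|A_1(u_k)|=|B(x_{i_1}\cdots x_{i_k})|-1$, recognize $A_2(u/u_k)$ as $S_{i_{k+1}+1,n,d-k}$, and apply Corollary~\ref{cor maxgen S_l,n,d}. Your version is in fact more detailed, spelling out the binomial index check and the edge case $i_{k+1}=n$ that the paper leaves implicit.
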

\begin{proof} The proof follows from Proposition~\ref{prop maxgen gaps} together with the above corollary. Using Definition~\ref{A1 A2} for $A_1(v), A_2(v)$, and since $u_k=x_{i_1}\dots x_{i_k}$, we have
$$
|A_1(u_k)| \ = \ |B(x_{i_1}\dots x_{i_k})|-1.
$$
Moreover, $A_2(u/u_k)$ is the set of all monomials of degree $\deg(u/u_k)$ in the variables $x_i$ with $\min(u/u_k)+1 \le i \le n$. Therefore, in order to determine $\maxgen(A_2(u/u_k))$, it remains to apply Corollary~\ref{cor maxgen S_l,n,d}, using $l=i_{k+1}+1$ since $u/u_k=x_{i_{k+1}}\dots x_{i_d}$ and so $\min(u/u_k) = i_{k+1}$.
\end{proof}

\section{On the first and last variables}\label{sec x1 xn}

For the determination of Gotzmann monomials in $S_n$, both variables $x_1$ and $x_n$ behave in some specific ways. This section describes how.

\subsection{Neutrality of $x_1$}
Our purpose here is to show that \textit{a monomial $u \in S_{n}$ is Gotzmann if and only if $x_1u$ is}. We start with some intermediate results.
\begin{lemma}\label{div by x_1} Let $u,v \in S_{n,d}$ such that $u \ge v$. If $x_1$ divides $v$ then $x_1$ divides $u$.
\end{lemma}
\begin{proof} Write $u=x_{i_1}\cdots x_{i_d}$, $v=x_{j_1}\cdots x_{j_d}$ with $1 \le i_1\le \dots \le i_d \le n$ and $1 \le j_1\le \dots \le j_d \le n$. Without loss of generality, we may assume $u > v$. Hence there exists an index $1 \le k \le d$ such that 
$$
i_1=j_1, \dots, i_{k-1}=j_{k-1}, i_k < j_k. 
$$
Therefore $i_1 \le j_1$. Assume $x_1$ divides $v$. This is equivalent to $j_1=1$. Since $1 \le i_1 \le j_1$, then $i_1=1$ whence $x_1$ divides $u$ and we are done. 
\end{proof}
\begin{lemma}\label{L(x_1u)} Let $u \in S_{n,d}$. Then $x_1L(u) \,=\,  L(x_1u)$.
\end{lemma}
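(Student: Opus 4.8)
The plan is to prove the two inclusions $x_1L(u) \subseteq L(x_1u)$ and $L(x_1u) \subseteq x_1L(u)$ separately, using Lemma~\ref{order and multiplication} and Lemma~\ref{div by x_1}.

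For the inclusion $x_1L(u) \subseteq L(x_1u)$: take $v \in L(u)$, so $v \in S_{n,d}$ and $v \ge u$. Then $x_1v \in S_{n,d+1}$, and by Lemma~\ref{order and multiplication} applied with $i=1$, we have $x_1v \ge x_1u$, hence $x_1v \in L(x_1u)$. This direction is immediate and uses only compatibility of the lex order with multiplication.

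For the reverse inclusion $L(x_1u) \subseteq x_1L(u)$: take $w \in L(x_1u)$, so $w \in S_{n,d+1}$ and $w \ge x_1u$. Since $x_1$ divides $x_1u$, Lemma~\ref{div by x_1} gives that $x_1$ divides $w$. Write $w = x_1v$ with $v \in S_{n,d}$. Then $x_1v = w \ge x_1u$, and by Lemma~\ref{order and multiplication} again (the ``only if'' direction, i.e. $x_1v \ge x_1u \implies v \ge u$), we get $v \ge u$, so $v \in L(u)$ and $w = x_1v \in x_1L(u)$.

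The only mild subtlety — which is really the ``main obstacle'', though it is minor — is the use of Lemma~\ref{div by x_1} to guarantee that an arbitrary monomial $w$ lex-above $x_1u$ is actually divisible by $x_1$, so that it can be written as $x_1v$; without this one could not even form the candidate preimage $v$. Once that is in place, both directions are just the order-multiplication equivalence of Lemma~\ref{order and multiplication}. I would write the argument as two short displayed chains of implications, one per inclusion, and conclude by set equality.
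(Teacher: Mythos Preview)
Your proof is correct and follows essentially the same approach as the paper: both inclusions are proved separately, using Lemma~\ref{order and multiplication} for the forward inclusion and Lemma~\ref{div by x_1} followed by Lemma~\ref{order and multiplication} for the reverse inclusion. The paper's argument is virtually identical to yours, differing only in notation.
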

\begin{proof} Let $v \in L(u)$. Then $v \ge u$, whence $x_1v \ge x_1u$ by Lemma~\ref{order and multiplication}, \emph{i.e.} $x_1v \in L(x_1u)$. Therefore $x_1L(u) \subseteq L(x_1u)$. Conversely, let $v' \in L(x_1u) \subseteq S_{n,d+1}$. Then $v' \ge x_1u$. Hence, mutatis mutandis, $x_1$ divides $v'$ by Lemma~\ref{div by x_1}. Thus there exists $v \in S_{n,d}$ such that $v'=x_1v$. Now $x_1v \ge x_1u$ by hypothesis, whence $v \ge u$ by Lemma~\ref{order and multiplication} again, \emph{i.e.} $v \in L(u)$ and so $v' \in x_1L(u)$. Therefore $L(x_1u) \, \subseteq \, x_1L(u)$. 
\end{proof}
In particular, the lemma implies that \textit{multiplying any lexsegment by $x_1$ again yields a lexsegment}.
\begin{lemma}\label{x_1B^lex}
Let $B \subseteq S_{n,d}$. Then $(x_1B)^{lex}=x_1B^{lex}$.
\end{lemma}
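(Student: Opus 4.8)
The plan is to deduce this directly from Lemma~\ref{L(x_1u)} together with the uniqueness of a lexsegment of prescribed cardinality. First I would recall that $B^{lex}$ is a lexsegment in $S_{n,d}$, say $B^{lex}=L(w_B)$ for the appropriate monomial $w_B\in S_{n,d}$, and that $|B^{lex}|=|B|$ by definition. Applying Lemma~\ref{L(x_1u)} with $u=w_B$ gives $x_1B^{lex}=x_1L(w_B)=L(x_1w_B)$, so $x_1B^{lex}$ is itself a lexsegment, now sitting inside $S_{n,d+1}$.

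Next I would compare cardinalities. Since multiplication by $x_1$ is an injection $S_{n,d}\hookrightarrow S_{n,d+1}$, we have $|x_1B^{lex}|=|B^{lex}|=|B|=|x_1B|$. On the other hand, $(x_1B)^{lex}$ is by definition the unique lexsegment in $S_{n,d+1}$ with $|(x_1B)^{lex}|=|x_1B|$. Thus $x_1B^{lex}$ and $(x_1B)^{lex}$ are two lexsegments in $S_{n,d+1}$ of the same cardinality, hence equal by uniqueness, which is the desired identity $(x_1B)^{lex}=x_1B^{lex}$.

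There is essentially no obstacle here: all the real work has already been done in Lemma~\ref{L(x_1u)}, whose content is precisely that multiplying a lexsegment by $x_1$ yields a lexsegment. The only point to keep straight is that one should invoke $L(x_1w_B)$ inside $S_{n,d+1}$ (in the notation of Remark~\ref{caution}, this is $L_n$ with the same number $n$ of variables), so that no shift in the number of variables occurs and the uniqueness statement applies verbatim.
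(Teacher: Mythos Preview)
Your proof is correct and follows essentially the same approach as the paper's own proof: both arguments use Lemma~\ref{L(x_1u)} to see that $x_1B^{lex}$ is a lexsegment, then match cardinalities and invoke the uniqueness of a lexsegment of given size in $S_{n,d+1}$. The paper writes the chain of equalities $|(x_1B)^{lex}|=|x_1B|=|B|=|B^{lex}|=|x_1B^{lex}|$ explicitly, but the logic is identical to yours.
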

\begin{proof} We have $|B^{lex}|=|B|=|x_1B|$. Applying this to the set $x_1B$ yields 
\begin{equation}\label{five =}
|(x_1B)^{lex}|=|x_1B|=|B|=|B^{lex}|=|x_1B^{lex}|.
\end{equation}
Now $B^{lex}$ is a lexsegment, whence $x_1B^{lex}$ also is by Lemma~\ref{L(x_1u)} and the comment following it. Moreover, $x_1B^{lex}$ has the same cardinality as the lexsegment $(x_1B)^{lex}$ by \eqref{five =}. Whence these two lexsegments coincide.
\end{proof}

\begin{proposition}\label{gaps x_1u} Let $u \in S_{n,d}$. Then $\gaps(x_1u)=x_1\gaps(u)$.
\end{proposition}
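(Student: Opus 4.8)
The plan is to recall the defining partition $L(u) = B(u) \sqcup \gaps(u)$ and to transport it through multiplication by $x_1$. First I would note that $B(x_1u) = x_1 B(u)$. The inclusion $x_1 B(u) \subseteq B(x_1u)$ is immediate, since $x_1u \in x_1 B(u)$ and $x_1 B(u)$ is Borel-stable: if $x_1 v \in x_1 B(u)$ and $x_j \mid x_1 v$, then either $j \ge 2$, in which case $x_j \mid v$ and $x_i(x_1v)/x_j = x_1 \cdot (x_iv/x_j) \in x_1 B(u)$ for $i \le j$, or $j = 1$, in which case the operation only allows $i = 1$ and returns $x_1 v$ itself. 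For the reverse inclusion, every element $w$ of $B(x_1u)$ is reached from $x_1u$ by a sequence of Borel moves $w' \mapsto x_i w'/x_j$ with $i \le j$; since $x_1 u$ is divisible by $x_1$ and $w \ge x_1 u$, Lemma~\ref{div by x_1} shows $x_1 \mid w$, so $w = x_1 v$; one then checks (by induction on the length of the move sequence, using that at each stage the monomial stays divisible by $x_1$) that $v \in B(u)$, giving $B(x_1u) \subseteq x_1 B(u)$.

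Next I would combine this with Lemma~\ref{L(x_1u)}, which gives $L(x_1u) = x_1 L(u)$. Subtracting,
$$
\gaps(x_1u) \ = \ L(x_1u) \setminus B(x_1u) \ = \ x_1 L(u) \setminus x_1 B(u) \ = \ x_1\big(L(u) \setminus B(u)\big) \ = \ x_1 \gaps(u),
$$
where the middle equality uses that multiplication by $x_1$ is injective on monomials, so it commutes with set difference. This completes the argument.

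I do not expect a serious obstacle here; the only point requiring a little care is the identification $B(x_1u) = x_1 B(u)$, and specifically the verification that $x_1 B(u)$ is genuinely Borel-stable (the $j = 1$ case of the Borel condition is the subtle one, but it is harmless since it forces $i = 1$). Everything else is a formal consequence of already-established lemmas, chiefly Lemmas~\ref{order and multiplication}, \ref{div by x_1} and \ref{L(x_1u)}. An alternative, slicker route avoiding any discussion of $B$ would be to use the companion identity $\widetilde{x_1u} = x_1\tilde u$ together with $L(x_1u) = x_1L(u)$; but since $\gaps$ is defined directly via $B(u)$, the route through $B(x_1u) = x_1B(u)$ is the most transparent.
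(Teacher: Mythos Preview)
Your overall strategy---establish $L(x_1u)=x_1L(u)$ and $B(x_1u)=x_1B(u)$, then take the set difference---is sound and is essentially what the paper does (the paper chases elements directly in Proposition~\ref{gaps x_1u} but records $B(x_1u)=x_1B(u)$ explicitly as Step~1 of Theorem~\ref{u and x_1u}). However, your argument for $B(x_1u)=x_1B(u)$ has the two inclusions tangled.

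The reasoning you offer for ``$x_1B(u)\subseteq B(x_1u)$''---that $x_1B(u)$ is Borel-stable and contains $x_1u$---actually yields the \emph{opposite} inclusion $B(x_1u)\subseteq x_1B(u)$, by minimality of $B(x_1u)$ among Borel-stable sets containing $x_1u$. Your ``reverse inclusion'' paragraph then starts from $w\in B(x_1u)$ and concludes $w\in x_1B(u)$, so it proves $B(x_1u)\subseteq x_1B(u)$ a second time. The inclusion $x_1B(u)\subseteq B(x_1u)$ is therefore never established. The fix is short: for $v\in B(u)$, take a chain of Borel moves from $u$ to $v$; each move replaces some $x_j$ with $x_i$ for $i\le j$ and in particular never removes an $x_1$, so the same chain applied to $x_1u$ yields $x_1v$, whence $x_1v\in B(x_1u)$. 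With that correction your set-difference computation is valid.
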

\begin{proof}
Let $v' \in \gaps(x_1u)$. Then $v' > x_1u$, whence $x_1$ divides $v'$ by Lemma~\ref{div by x_1}. Let $v \in S_{n,d}$ such that $v'=x_1v$. Since $x_1v \in \gaps(x_1u)$, it follows that $v \in \gaps(u)$, since $v \ge u$ and $v$ cannot belong to $B(u)$ for otherwise $x_1v$ would belong to $B(x_1u)$. Hence $v' \in x_1\gaps(u)$. 

Conversely, let $v \in \gaps(u)$. Then $v > u$, whence $x_1v > x_1u$ and so $x_1v \in L(x_1u)$. Since $v \notin B(u)$, it follows that $x_1v \notin B(x_1u)$. Whence $x_1v \in \gaps(x_1u)$.
\end{proof}

\begin{theorem}\label{u and x_1u} Let $u \in S_n$. Then $u$ is Gotzmann if and only if $x_1u$ is Gotzmann.
\end{theorem}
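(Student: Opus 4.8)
The plan is to reduce the statement to the characterization of Gotzmann monomials given in Theorem~\ref{Gotzmann monomials}, namely that $u$ is Gotzmann if and only if $\maxgen(\gaps(u)) = \maxgen(\cogaps(u))$, and similarly $x_1u$ is Gotzmann if and only if $\maxgen(\gaps(x_1u)) = \maxgen(\cogaps(x_1u))$. So it suffices to show that multiplying by $x_1$ affects the gaps and cogaps in a compatible way — ideally, that it simply multiplies both by $x_1$, which does not change which variable is the last (maximal) one in any monomial, hence does not change the maxgen equality.

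First I would use Proposition~\ref{gaps x_1u}, which already gives $\gaps(x_1u) = x_1\gaps(u)$. Since every monomial $w$ and the monomial $x_1w$ have the same maximal index (as $x_1$ is never the last variable unless $w = x_1^d$, and even then $\max(x_1 \cdot x_1^d) = 1 = \max(x_1^d)$), we get $\lambda(x_1w) = \lambda(w)$ for all $w$, and therefore $\maxgen(x_1\gaps(u)) = \maxgen(\gaps(u))$. Next I would establish the analogous statement for cogaps, i.e. $\cogaps(x_1u) = x_1\cogaps(u)$. By Proposition~\ref{pred and cogaps}, $\cogaps(u) = \pred_g(u)$ where $g = |\gaps(u)|$, and by Proposition~\ref{gaps x_1u} we have $|\gaps(x_1u)| = |x_1\gaps(u)| = |\gaps(u)| = g$ as well, so $\cogaps(x_1u) = \pred_g(x_1u)$. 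It then remains to show $\pred_g(x_1u) = x_1\pred_g(u)$, which follows by induction from $\pred(x_1w) = x_1\pred(w)$ whenever $x_1w \ne x_1^d$; this is exactly Lemma~\ref{pred(vw)} applied with $v = x_1$, $w$ arbitrary (note $\max(x_1) = 1 \le \max(w)$ always holds). Hence $\maxgen(\cogaps(x_1u)) = \maxgen(x_1\cogaps(u)) = \maxgen(\cogaps(u))$ by the same $\lambda(x_1w) = \lambda(w)$ observation.

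Putting these together: $\maxgen(\gaps(x_1u)) = \maxgen(\gaps(u))$ and $\maxgen(\cogaps(x_1u)) = \maxgen(\cogaps(u))$, so the equality $\maxgen(\gaps(x_1u)) = \maxgen(\cogaps(x_1u))$ holds if and only if $\maxgen(\gaps(u)) = \maxgen(\cogaps(u))$. By Theorem~\ref{Gotzmann monomials}, this says precisely that $x_1u$ is Gotzmann if and only if $u$ is Gotzmann.

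I do not anticipate a serious obstacle here, since all the structural work (how gaps and predecessors interact with multiplication by $x_1$) has already been done in the preceding lemmas and propositions. The one point requiring a little care is the degenerate case $u = x_1^d$, where $x_1u = x_1^{d+1}$ — but then $\gaps(u) = \cogaps(u) = \emptyset$, $\gaps(x_1u) = \cogaps(x_1u) = \emptyset$, and both $u$ and $x_1u$ are trivially Gotzmann, so the equivalence holds vacuously. I would mention this briefly and otherwise assume $u \ne x_1^d$ when invoking the predecessor formula.
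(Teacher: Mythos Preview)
Your proof is correct and follows essentially the same overall strategy as the paper: establish $\maxgen(\gaps(x_1u)) = \maxgen(\gaps(u))$ and $\maxgen(\cogaps(x_1u)) = \maxgen(\cogaps(u))$, then invoke Theorem~\ref{Gotzmann monomials}. The gaps half is identical to the paper's (Proposition~\ref{gaps x_1u} plus $\lambda(x_1w)=\lambda(w)$).

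For the cogaps half you take a slightly different technical route. The paper argues via $\widetilde{x_1u} = x_1\tilde{u}$, which it derives from $B(x_1u)=x_1B(u)$ together with Lemmas~\ref{L(x_1u)} and~\ref{x_1B^lex}, and then computes $\cogaps(x_1u)=L(x_1u)\setminus L(\widetilde{x_1u}) = x_1(L(u)\setminus L(\tilde{u}))$. You instead use the predecessor description $\cogaps(\cdot)=\pred_g(\cdot)$ from Proposition~\ref{pred and cogaps} and show $\pred_g(x_1u)=x_1\pred_g(u)$ by iterating Lemma~\ref{pred(vw)} with $v=x_1$. Your approach is a bit more economical in that it reuses tools from Sections~\ref{sec pred and succ}--\ref{sec maxgen} rather than the $x_1$-specific Lemmas~\ref{div by x_1}--\ref{x_1B^lex}; the paper's approach has the mild advantage of explicitly identifying $\widetilde{x_1u}$, which is conceptually natural. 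Your handling of the degenerate case $u=x_1^d$ is fine.
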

\begin{proof}  First some preliminary steps. 

\medskip
\noindent
\textbf{Step 1.}
\textit{We have $B(x_1u) \,=\, x_1B(u)$.}

\smallskip
Indeed, by applying transformations of the form $v \mapsto v'=x_iv/x_j$ for $v \in B(x_1u)$ or $v \in x_1B(u)$, with $x_j$ dividing $v$ and $1 \le i < j$, the variable $x_1$ is not affected since $j \ge 2$. Whence the claimed equality.

\medskip
\noindent
\textbf{Step 2.}
\textit{We have $\widetilde{x_1u} \,=\, x_1\tilde{u}$.}

\smallskip
Indeed, it suffices to prove $L(\widetilde{x_1u}) \,=\, L(x_1\tilde{u})$. On the one hand, we have $L(\widetilde{x_1u})=B(x_1u)^{lex}$ by definition. Now $B(x_1u)=x_1B(u)$ by Step 1. Thus $L(\widetilde{x_1u})= (x_1B(u))^{lex}$, and $(x_1B(u))^{lex}=x_1B(u)^{lex}$ by Lemma~\ref{x_1B^lex}, and $x_1B(u)^{lex}=x_1L(\tilde{u})$ by definition. Finally, $x_1L(\tilde{u})= L(x_1\tilde{u})$ by Lemma~\ref{L(x_1u)} applied to $\tilde{u}$. This concludes the proof of Step 2.

\medskip
\noindent
\textbf{Step 3.}
\textit{For all $B \subseteq S_{n,d}$, we have $\maxgen(x_1B) \,=\,\maxgen(B)$.}

\smallskip
Indeed, this follows from Lemma~\ref{maxgen} and the obvious equality $\lambda(x_1v)=\lambda(v)$ for all $v \in S_{n,d}$.

\medskip
We may now compare the maxgen monomials of $\gaps(u), \cogaps(u)$ with those of $\gaps(x_1u), \cogaps(x_1u)$, respectively. First, by Proposition~\ref{gaps x_1u} and Step 3, we have
\begin{equation}\label{x_1u}
\maxgen(\gaps(x_1u)) \,=\, \maxgen(\gaps(u)).
\end{equation}
Symmetrically, we also have
\begin{equation}\label{cogaps x_1u}
\maxgen(\cogaps(x_1u)) \,=\, \maxgen(\cogaps(u)),
\end{equation}
as we now show:
\begin{eqnarray*}
\maxgen(\cogaps(x_1u)) & = & \maxgen(L(x_1u) \setminus L(\widetilde{x_1u}))\\
& = & \maxgen(L(x_1u) \setminus L(x_1\tilde{u}))  \quad \textrm{(by Step 2)}\\
& = & \maxgen(x_1(L(u) \setminus L(\tilde{u})))  \quad \textrm{(by Lemma~\ref{L(x_1u)})}\\
& = & \maxgen(L(u) \setminus L(\tilde{u}))  \quad \textrm{(by Step 3)}\\
& = & \maxgen(\cogaps(u)).
\end{eqnarray*}
The desired equivalence is now easy to establish. Indeed, it follows from \eqref{x_1u} and \eqref{cogaps x_1u} that
\begin{equation}\label{new  equality}
\maxgen(\gaps(x_1u)) \,=\, \maxgen(\cogaps(x_1u))
\end{equation}
if and only if
\begin{equation}\label{gotz}
\maxgen(\gaps(u)) \,=\, \maxgen(\cogaps(u)).
\end{equation}
Therefore, $x_1u$ is Gotzmann if and only if $u$ is Gotzmann.
\end{proof}

\subsection{On $\gaps(ux_n)$}

For use in the next section, we shall need to control $\maxgen(\gaps(ux_n))$.

\begin{definition} Let $u \in S_n$. For all $i \le n$, denote 
$$
\gaps(u, i) \,=\, \{v \in \gaps(u) \mid \max v = i\}.
$$
\end{definition}
Note that $\gaps(u, 1)$ is empty, for $x_1^d$ cannot be a gap since it obviously belongs to $B(u)$ for all $u \in S_{n,d}$.

\begin{theorem}\label{gaps ux_n} Let $u \in S_n$. Then for all $1 \le j \le n$, we have
\begin{equation}\label{gaps of x_n u}
\gaps(ux_n, j) \,=\, \bigsqcup_{i = 1}^j \,  \gaps(u, i)x_j.
\end{equation}
\end{theorem}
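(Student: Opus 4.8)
The plan is to translate both sides into the combinatorial description of gaps supplied by Lemma~\ref{lemma gaps}, exploiting that $x_n$ is the lexicographically smallest variable: appending it to $u$ cannot create a ``bad position'' at the last slot of the increasing index sequence.

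First I would fix notation. Write $u=x_{i_1}\cdots x_{i_d}$ with $i_1\le\dots\le i_d\le n$; then $ux_n=x_{i_1}\cdots x_{i_d}x_n$ is already non-decreasing, so the increasing index sequence of $ux_n$ is $(i_1,\dots,i_d,n)$. Also, any $w\in S_{n,d+1}$ with $\max(w)=j$ factors uniquely as $w=vx_j$ with $v=w/x_j\in S_{n,d}$ and $\max(v)\le j$, and the increasing index sequence of $w$ is that of $v$ with $j$ appended at the end. The right-hand side is a genuine disjoint union: the $\gaps(u,i)$ are pairwise disjoint, multiplication by $x_j$ is injective, and every element of $\gaps(u,i)x_j$ has $\max$ equal to $j$ once $i\le j$. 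So it remains to prove the two inclusions.

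For $\supseteq$, I would take $v\in\gaps(u,i)$ with $i=\max(v)\le j$ and invoke Lemma~\ref{lemma gaps} for the pair $(v,u)$ to get indices $1\le s<t\le d$ with $(j_1,\dots,j_{s-1})=(i_1,\dots,i_{s-1})$, $j_s<i_s$, $j_t>i_t$ (increasing sequences of $v$ and $u$). Since $s<t\le d<d+1$ and the increasing sequences of $vx_j$ and $ux_n$ agree with those of $v$ and $u$ in positions $1,\dots,d$, the same $s,t$ verify condition (2) of Lemma~\ref{lemma gaps} for the pair $(vx_j,ux_n)$. Hence $vx_j$ is a gap of $ux_n$, and as $\max(vx_j)=j$ we get $vx_j\in\gaps(ux_n,j)$.

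For $\subseteq$, I would take $w\in\gaps(ux_n,j)$, write $w=vx_j$ with $v=w/x_j\in S_{n,d}$, and apply Lemma~\ref{lemma gaps} to $(w,ux_n)$ to obtain indices $1\le s<t\le d+1$ with the corresponding properties, relative to the increasing sequences $(j'_1,\dots,j'_{d+1})$ of $w$ and $(i_1,\dots,i_d,n)$ of $ux_n$. The crucial point — the only step needing any thought, and the one I would flag as the ``main obstacle'' — is that $t=d+1$ is impossible: it would force $j'_{d+1}=\max(w)>n$, which is absurd. Hence $s<t\le d$; since the two sequences agree with those of $v$ and $u$ in positions $1,\dots,d$, Lemma~\ref{lemma gaps} applied to $(v,u)$ shows $v\in\gaps(u)$, so $v\in\gaps(u,i)$ with $i=\max(v)=j'_d\le j$, and therefore $w=vx_j\in\gaps(u,i)x_j$, which lies in the right-hand side. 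Apart from this point the argument is routine bookkeeping with non-decreasing index sequences, and the role of multiplying specifically by the last variable $x_n$ (rather than by an arbitrary $x_m$) is precisely to rule out $t=d+1$.
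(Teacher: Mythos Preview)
Your proof is correct and follows essentially the same approach as the paper: both arguments reduce to the characterization of gaps in Lemma~\ref{lemma gaps} and hinge on the single nontrivial observation that the index $t$ cannot equal $d+1$, since the $(d{+}1)$st entry of the increasing sequence of $ux_n$ is $n$ and no index can exceed it. The paper phrases this via the equivalent formulation $w\in\gaps(ux_n)\iff w/\lambda(w)\in\gaps(u)$, but the content of the argument is the same as yours.
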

Here is an equivalent formulation.

\begin{theorem} Let $u \in S_n$. Then, for any $w \in S_n$, we have
$$
w \in \gaps(ux_n) \ \Longleftrightarrow \ \frac{w}{\lambda(w)} \in \gaps(u).
$$
\end{theorem}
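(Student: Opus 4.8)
The plan is to prove the displayed equivalence directly from the index-level description of gaps in Lemma~\ref{lemma gaps}; this simultaneously yields the graded form, Theorem~\ref{gaps ux_n}. Fix $u\in S_{n,d}$ and write $u=x_{i_1}\cdots x_{i_d}$ with $i_1\le\dots\le i_d$. Since $x_n$ is the lexicographically smallest variable, the sorted exponent sequence of $ux_n$ is $(i_1,\dots,i_d,i_{d+1})$ with $i_{d+1}:=n$; in particular its first $d$ entries are those of $u$. One may assume $\deg w=d+1$ (and hence $w\neq 1$), since otherwise neither side of the equivalence can hold, for degree reasons.

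For the implication $\Rightarrow$, assume $w\in\gaps(ux_n)$ and write $w=x_{m_1}\cdots x_{m_{d+1}}$ with $m_1\le\dots\le m_{d+1}$. By Lemma~\ref{lemma gaps} there are indices $1\le s<t\le d+1$ with $(m_1,\dots,m_{s-1})=(i_1,\dots,i_{s-1})$, $m_s<i_s$ and $m_t>i_t$ (with the convention $i_{d+1}=n$). The crucial point is that $t\le d$: otherwise $t=d+1$ would force $m_{d+1}>i_{d+1}=n$, which is impossible. Hence the same pair $s<t$, now lying in $\{1,\dots,d\}$, witnesses — again via Lemma~\ref{lemma gaps} — that $x_{m_1}\cdots x_{m_d}\in\gaps(u)$; and this monomial is precisely $w/\lambda(w)$, because $\lambda(w)=x_{m_{d+1}}$ is the last letter of the sorted factorization of $w$.

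For the converse, set $v:=w/\lambda(w)$, assume $v\in\gaps(u)$, and put $x_j=\lambda(w)$, so that $w=vx_j$ with $j=\max w\ge\max v$; thus the sorted exponent sequence of $w$ is obtained from that of $v$ by appending $j$ at the end. Writing $v=x_{l_1}\cdots x_{l_d}$ with $l_1\le\dots\le l_d$, Lemma~\ref{lemma gaps} furnishes $1\le s<t\le d$ with $(l_1,\dots,l_{s-1})=(i_1,\dots,i_{s-1})$, $l_s<i_s$, $l_t>i_t$. Since $s<t\le d<d+1$ and the first $d$ letters of the sorted factorizations of $w$ and of $ux_n$ are $(l_1,\dots,l_d)$ and $(i_1,\dots,i_d)$, the same pair $s<t$ shows, once more via Lemma~\ref{lemma gaps}, that $w\in\gaps(ux_n)$.

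Finally, Theorem~\ref{gaps ux_n} is recovered by sorting $\gaps(ux_n)$ according to the value $j=\max w$, which is the index of $\lambda(w)$: the equivalence just proved says $w\in\gaps(ux_n,j)$ iff $w=vx_j$ with $v\in\gaps(u)$ and $\max v\le j$, i.e. iff $w$ belongs to $\bigl(\bigsqcup_{i=1}^{j}\gaps(u,i)\bigr)x_j$, the union being disjoint because the $\gaps(u,i)$ are disjoint and $v\mapsto vx_j$ is injective. I do not anticipate a real obstacle: the whole argument is bookkeeping with sorted exponent sequences, and the one delicate point is the inequality $t\le d$ in the forward direction, which is exactly where the maximality of the appended variable $x_n$ is used. (Alternatively, the same conclusion can be extracted from the structural decomposition of $\gaps$ in Proposition~\ref{gaps}, noting that the extra summand indexed by $k=d$ vanishes because $A_2(x_n)=\emptyset$.)
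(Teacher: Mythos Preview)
Your proof is correct and follows essentially the same route as the paper's: both arguments appeal directly to the index-level characterization in Lemma~\ref{lemma gaps}, and the only substantive step is your observation that $t\le d$ because $i_{d+1}=n$ cannot be exceeded. Your write-up is somewhat more explicit (and adds the derivation of Theorem~\ref{gaps ux_n} and an alternative via Proposition~\ref{gaps}), but the core argument is the same.
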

\begin{proof} Let $d = \deg(u)$, and write $u=x_{i_1}\cdots x_{i_d}$ with $1 \le i_1 \le \dots \le i_d \le n$. We may assume $\deg(w)=d+1$, for otherwise $w$ cannot be a gap of $ux_n$. Set $\max(w)=m$. Let $v=w/\lambda(w)$, and write $v=x_{j_1}\cdots x_{j_d}$ with $1 \le j_1 \le \dots \le j_d \le m$. 

By Lemma~\ref{gaps}, $v$ is a gap of $u$ if and only if there exist indices $1 \le s < t \le d$ such that $j_s < i_s$ and $j_t > i_t$. If these conditions are met, then since $w=vx_m$ with $m \ge \max(v)$, then automatically $w$ is a gap of $ux_n$, still by Lemma~\ref{gaps}. Conversely, if $w$ is a gap of $ux_n$, and since $\max(w) \le \max(ux_n)=n$, then the index $t \le d+1$ given by Lemma~\ref{gaps} necessarily satisfies $t \le d$. Hence $v$ is a gap of $u$.
\end{proof}

\begin{corollary}\label{maxgen gaps of x_n u} Let $u \in S_n$. If 
$\displaystyle
\maxgen(\gaps(u)) \,=\, \prod_{i=1}^n x_i^{k_i}
$
then 
$$
\maxgen(\gaps(ux_n)) \,=\, \prod_{j=1}^n x_j^{k_1+\dots+k_j}.
$$
\end{corollary}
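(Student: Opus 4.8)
The plan is to combine Theorem~\ref{gaps ux_n}, which describes $\gaps(ux_n,j)$ in terms of the sets $\gaps(u,i)$ for $i \le j$, with the fact that $\maxgen$ is the maximal-index generating function (Lemma~\ref{maxgen}). The key observation is that multiplying a set of monomials by the single variable $x_j$ has a clean effect on maxgen: if $C \subseteq S_{n,d}$ is any set all of whose elements $w$ satisfy $\max(w) \le j$, then every $w \in C$ contributes $\lambda(wx_j) = x_j$ to $\maxgen(Cx_j)$, so that $\maxgen(Cx_j) = x_j^{|C|}$. Applying this with $C = \gaps(u,i)$ (whose elements have $\max = i \le j$) gives $\maxgen(\gaps(u,i)x_j) = x_j^{|\gaps(u,i)|} = x_j^{k_i}$, using that $|\gaps(u,i)|$ is exactly the exponent $k_i$ of $x_i$ in $\maxgen(\gaps(u))$ by Lemma~\ref{maxgen} (sorting $\gaps(u)$ by the value of $\max$).

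First I would record, via Lemma~\ref{maxgen} applied to the partition $\gaps(u) = \bigsqcup_{i=1}^n \gaps(u,i)$, that $k_i = |\gaps(u,i)|$ for each $i$; in particular $k_1 = 0$ since $\gaps(u,1)$ is empty. Next, for a fixed $j$, I would take the disjoint union \eqref{gaps of x_n u} from Theorem~\ref{gaps ux_n} and apply $\maxgen$, which is multiplicative over disjoint unions (it is just a product over elements):
$$
\maxgen(\gaps(ux_n,j)) \ = \ \prod_{i=1}^j \maxgen\big(\gaps(u,i)x_j\big) \ = \ \prod_{i=1}^j x_j^{k_i} \ = \ x_j^{k_1+\dots+k_j}.
$$
Finally, since $\gaps(ux_n) = \bigsqcup_{j=1}^n \gaps(ux_n,j)$, one more application of the multiplicativity of $\maxgen$ over disjoint unions yields
$$
\maxgen(\gaps(ux_n)) \ = \ \prod_{j=1}^n \maxgen(\gaps(ux_n,j)) \ = \ \prod_{j=1}^n x_j^{k_1+\dots+k_j},
$$
which is the claimed formula.

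There is essentially no obstacle here: the whole content has been front-loaded into Theorem~\ref{gaps ux_n}. The only points requiring a moment's care are (i) justifying $\maxgen(\gaps(u,i)x_j) = x_j^{k_i}$, which rests on the elementary identity $\lambda(wx_j) = x_j$ whenever $\max(w) \le j$ — true because then $x_j \mid wx_j$ and $x_j$ is the lexicographically smallest variable dividing $wx_j$ — together with $|\gaps(u,i)| = k_i$; and (ii) the bookkeeping that the exponent of $x_j$ in the final product collects contributions only from those $\gaps(ux_n,j)$ with the \emph{same} index $j$, since each such set contributes a pure power of $x_j$. Both are routine, so the proof is short.
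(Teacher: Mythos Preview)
Your proof is correct and follows essentially the same approach as the paper: both derive the exponent of $x_j$ in $\maxgen(\gaps(ux_n))$ from Theorem~\ref{gaps ux_n} by identifying $k_i = |\gaps(u,i)|$ and summing over $i \le j$. The paper's version is terser, phrasing the computation purely in terms of cardinalities $|\gaps(ux_n,j)| = \sum_{i=1}^j |\gaps(u,i)|$ and then invoking the definition of $\maxgen$, whereas you spell out the same count via $\maxgen(\gaps(u,i)x_j) = x_j^{k_i}$; but there is no substantive difference.
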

\begin{proof} By Theorem~\ref{gaps ux_n}, we have
$$
|\gaps(ux_n, j)| \,=\, \sum_{i = 1}^j \,  |\gaps(u, i)x_j|| \,=\, \sum_{i = 1}^j \,  |\gaps(u, i)|.
$$
The statement now follows from the definition of the maxgen monomial.
\end{proof}

\section{Gotzmann monomials in $S_2, S_3, S_4$}\label{sec gotzmann}

This section contains the main result of this paper, namely the characterization of Gotzmann monomials in $S_n$ for $n=4$. This is achieved in Theorem~\ref{main thm}. The strategy is as follows. Let $u = x_1^{a_1}\cdots x_{n-1}^{a_{n-1}}x_n^t \in S_n$. We may assume $a_1=0$ by Theorem~\ref{u and x_1u}, according to which $u$ is Gotzmann in $S_n$ if and only if $u/x_1^{a_1}$ is. We first compute $w_1=\maxgen(\gaps(u))$ using Theorem~\ref{thm maxgen gaps u}. The degree $g$ of $w_1$ gives the numbers of gaps of $u$. We then focus on $\cogaps(u)=\pred_g(u)$ and, more precisely, compute its maxgen monomial $w_2=\maxgen(\pred_g(u))$. Finally, requiring $w_1=w_2$ gives necessary and sufficient conditions on the exponent $t$ of $x_n$ for $u$ to be a Gotzmann monomial. 

Before turning to the case $n=4$, we start by reviewing the known cases $n=2$ and $3$. 

\subsection{The case $n=2$}
This is easy. Indeed, \textit{every monomial $u=x_1^ax_2^t$ is Gotzmann in $S_2$}. For in this case, the sets $B(u)$ and $L(u)$ coincide, whence $B(u)^{lex}=B(u)$ and so $B(u)$ is a Gotzmann set by Lemma~\ref{gotzmann sets}.

\subsection{The case $n=3$}

The result below for $n=3$ may be deduced from \cite[Proposition 8]{Mu2}. As an illustration of the strategy briefly described above, we give here an independent short proof using the tools developed in this paper.

\begin{proposition} Let $u=x_1^ax_2^bx_3^t \in S_3$. Then $u$ is a Gotzmann monomial in $S_3$ if and only if $t \ge \binom{b}{2}$.
\end{proposition}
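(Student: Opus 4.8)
The plan is to follow the strategy outlined at the start of the section. By Theorem~\ref{u and x_1u}, $u = x_1^a x_2^b x_3^t$ is Gotzmann if and only if $x_2^b x_3^t$ is, so I may assume $a = 0$ and work with $u = x_2^b x_3^t$ in $S_3$. The argument splits into two computations: first the maxgen monomial of $\gaps(u)$, then the maxgen monomial of $\cogaps(u) = \pred_g(u)$, and finally comparing the two via Theorem~\ref{Gotzmann monomials}.

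For the gaps: write $u = x_{i_1}\cdots x_{i_d}$ with the $i_j$ nondecreasing, so the first $b$ indices are $2$ and the last $t$ are $3$, and $d = b+t$. Applying Theorem~\ref{thm maxgen gaps u}, only the prefixes $u_k = \pre_k(u)$ with $i_{k+1} < n = 3$ contribute a nontrivial internal product, i.e.\ only $k$ with $i_{k+1} = 2$, which forces $k \le b-1$. For such $k$, $u_k = x_2^k$, so $|B(x_2^k)| - 1 = k$ (since $B(x_2^k) = \{x_1^k, x_1^{k-1}x_2, \dots, x_2^k\}$ has $k+1$ elements), and $A_2(u/u_k)$ consists of all monomials of degree $d-k$ in $x_3$ alone, so $\maxgen(A_2(u/u_k)) = x_3^{d-k}$. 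Hence $\maxgen(\gaps(u)) = \prod_{k=1}^{b-1} (x_3^{d-k})^{k}= x_3^{\sum_{k=1}^{b-1} k(d-k)}$, and in particular $g = |\gaps(u)| = \sum_{k=1}^{b-1} k(d-k)$. The key point is that every gap has $\max = 3$, so $w_1 = \maxgen(\gaps(u))$ is a pure power of $x_3$; therefore, by Theorem~\ref{Gotzmann monomials}, $u$ is Gotzmann if and only if $\maxgen(\cogaps(u))$ is also a pure power of $x_3$, i.e.\ if and only if \emph{none} of the $g$ monomials in $\pred_g(u) = \{u, \pred(u), \dots, \pred^{g-1}(u)\}$ has $\max \le 2$.

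For the cogaps: I track the iterated predecessors of $u = x_2^b x_3^t$ in $S_3$ using Proposition~\ref{predecessor}. Starting from $x_2^b x_3^t$, repeatedly applying $\pred$ walks backward through $L(u)$; the monomials with $\max \le 2$ are exactly those of the form $x_1^* x_2^*$, and by the Example following the corollary in Section~\ref{sec pred and succ} (or a direct check), the lexicographically largest such monomial that is $\le u$ is $x_2^{d-1}x_3$ only if $b \ge 1$ — more precisely I need to count how many monomials $v$ with $u \ge v$ and $\max v \ge 3$ there are before reaching the first monomial with $\max \le 2$. Equivalently, the monomials strictly between $x_1 x_2^{d-1}$ (the predecessor-side boundary) and $u$ with $\max \ge 3$: these are precisely the monomials $x_2^j x_3^{d-j}$ for $j = 0, 1, \dots, b-1$ together with all $x_1^p x_2^q x_3^r$ with $r \ge 1$ lying in the relevant range — I will enumerate $L^*(x_2^{d-1}x_3, u)$, or rather compute $|L(u)| - |L(x_2^{d-1}x_3)|$-type quantities, to find that the number of monomials $v \le u$ with $\min v \ge 2$ equals $t+1$ (namely $x_2^{d-1}x_3, x_2^{d-2}x_3^2, \dots$ down through $u = x_2^b x_3^t$ and below), so the condition "$\pred_g(u)$ contains no monomial with $\max \le 2$" becomes $g \le$ (number of monomials weakly above $u$ with $\min \ge 2$) $= $ something I compute explicitly, which should reduce to $t \ge \binom{b}{2}$.

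The main obstacle I anticipate is the bookkeeping in the cogaps step: correctly identifying the first iterated predecessor of $u$ whose max drops to $2$ (equivalently, the last monomial $\ge u$ with $\min \ge 2$), and counting exactly how many predecessors of $u$ lie before it, then matching that count against $g = \sum_{k=1}^{b-1} k(d-k)$. I expect the cleanest route is: $u$ is Gotzmann iff $\pred^g(u) \ge x_1 x_2^{d-1}$ (so that no monomial with $\max \le 2$ is swept), iff the lexinterval $L^*(x_1 x_2^{d-1}, u)$, which consists of all monomials $v$ with $x_1 x_2^{d-1} > v \ge u$ and all have $\max \ge 3$, has cardinality $\ge g$; computing $|L^*(x_1 x_2^{d-1}, u)|$ in closed form and simplifying the inequality against $\sum_{k=1}^{b-1} k(d-k)$ — with the $t$-dependence entering linearly on one side — should collapse to $t \ge \binom{b}{2}$ after routine algebra. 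The sign and the exact boundary monomial are the delicate points; everything else is substitution into the machinery already developed.
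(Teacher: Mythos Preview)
Your outline contains a concrete computational error in the gaps step. The set $A_2(u/u_k)$, for $k \le b-1$, is the set of monomials of degree $d-k$ in the single variable $x_3$; this set is the singleton $\{x_3^{d-k}\}$, so its maxgen monomial is $\lambda(x_3^{d-k}) = x_3$, not $x_3^{d-k}$. (Recall $\maxgen(B) = \prod_{w \in B} \lambda(w)$, and $\deg(\maxgen(B)) = |B|$.) Consequently
\[
\maxgen(\gaps(u)) \;=\; \prod_{k=1}^{b-1} x_3^{\,|B(x_2^k)|-1} \;=\; \prod_{k=1}^{b-1} x_3^{\,k} \;=\; x_3^{\binom{b}{2}},
\]
so $g = \binom{b}{2}$, independent of $t$. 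Your formula $g = \sum_{k=1}^{b-1} k(d-k)$ is wrong already for $b=2$, where it gives $g = t+1$ instead of $g = 1$; with that value the final inequality cannot collapse to $t \ge \binom{b}{2}$.

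Once $g = \binom{b}{2}$ is known, the cogaps step is far simpler than what you sketch. In $S_3$ one has $\pred^i(u) = x_2^{b+i} x_3^{t-i}$ for $0 \le i \le t$, so $\lambda(\pred^i(u)) = x_3$ for $i \le t-1$ and $\lambda(\pred^t(u)) = x_2$. Hence $\maxgen(\pred_g(u)) = x_3^g$ precisely when $g \le t$, and otherwise $x_2$ divides it. This is exactly the paper's argument; there is no need to enumerate $L^*(x_1 x_2^{d-1}, u)$ or to carry out any further algebra.
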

\begin{proof} Let $g=|\gaps(u)|$, $w_1=\maxgen(\gaps(u)), w_2=\maxgen(\cogaps(u))$. Then $g=\deg(w_1)=\deg(w_2)$. A straightforward computation with Theorem~\ref{thm maxgen gaps u} yields the monomial
$$w_1 = x_3^{\binom{b}{2}}$$
independent of $t$. Therefore $g=\binom{b}{2}$. Thus $\cogaps(u)=\pred_g(u)$. Consider now $w_2=\maxgen(\pred_g(u))$. For all $i \le t$, we have $\pred^i(u)=x_1^ax_2^{b+i}x_3^{t-i}$. Thus $\lambda(\pred^i(u))=x_3$ if $i<t$ and $\lambda(\pred^t(u))=x_2$. Hence $\maxgen(\pred_t(u))=x_3^t$ and $\maxgen(\pred_{t+1}(u))=x_2x_3^t$. Consequently, if $t < g$ then $x_2$ divides $w_2$ by Remark~\ref{divides} and so $w_2 \not= w_1$, whereas if $t \ge g$ then $w_2=x_3^g=w_1$. Thus $u$ is Gotzmann if and only $t \ge g$, as claimed.
\end{proof}

\subsection{The case $n=4$}

Our purpose in this section is to determine all Gotzmann monomials in $4$ variables. This is achieved in Theorem~\ref{main thm}. As recalled above, it suffices to consider monomials of the form $x_2^bx_3^cx_4^t$. Implementing our proof strategy requires several preliminary results. 

\medskip
We start by determining $\maxgen(\gaps(x_2^bx_3^cx_4^t))$.

\begin{proposition}\label{prop maxgen gaps u} Let $u_0=x_2^bx_3^c \in S_4$. Then for all $t \ge 0$, we have
\begin{equation}\label{f(t)}
\maxgen(\gaps(u_0x_4^t)) \ = \ x_3^{\binom{b}{2}}x_4^{f(t)},
\end{equation}
where
\begin{eqnarray*}\label{f0}
f(t) & = & f(0)+t\binom{b}{2}, \\
f(0) & = & \big(\frac{b+1}{3}+c\big)\binom{b}{2}+(b+1)\binom{c+1}{2}+\binom{c+1}{3}-c.
\end{eqnarray*}
\end{proposition}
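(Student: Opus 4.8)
The strategy is to apply Theorem~\ref{thm maxgen gaps u} directly to $u = u_0x_4^t = x_2^bx_3^cx_4^t$, written as $u = x_{i_1}\cdots x_{i_d}$ with $d = b+c+t$ and the indices $i_1 = \dots = i_b = 2$, $i_{b+1} = \dots = i_{b+c} = 3$, $i_{b+c+1} = \dots = i_d = 4$. The theorem expresses $\maxgen(\gaps(u))$ as a product over $k = 1, \dots, d-1$ of a factor raised to the power $|B(x_{i_1}\cdots x_{i_k})|-1$. First I would observe that the prefix $x_{i_1}\cdots x_{i_k}$ depends only on which ``block'' $k$ falls in: for $1 \le k \le b$ it is $x_2^k$; for $b+1 \le k \le b+c$ it is $x_2^bx_3^{k-b}$; for $b+c+1 \le k \le d-1$ it is $x_2^bx_3^cx_4^{k-b-c}$. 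Correspondingly, $i_{k+1} \in \{2,3,4\}$ and the inner product $\prod_{j=i_{k+1}+1}^{4} x_j^{\binom{d-k-2+j-i_{k+1}}{d-k-1}}$ only involves $x_3,x_4$ or $x_4$ alone or is empty. So the whole computation splits into three ranges of $k$, and I would treat each separately.

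\textbf{The three contributions.} In the first range ($1 \le k \le b$), the prefix is $x_2^k$, so $|B(x_2^k)| = k+1$ (monomials $x_1^ix_2^{k-i}$), giving exponent $k$; the cofactor has $i_{k+1} = 2$ (as long as $k < b$) or $i_{k+1} = 3$ (when $k = b$), and the inner product contributes powers of $x_3$ and $x_4$. In the second range ($b+1 \le k \le b+c$), the prefix is $x_2^bx_3^{k-b}$; I would compute $|B(x_2^bx_3^{k-b})|$ via Corollary~\ref{cor card <w>} (or directly: it is a sum of binomials), and here $i_{k+1} = 3$ for $k < b+c$ and $i_{k+1} = 4$ for $k = b+c$, so the inner product is a pure power of $x_4$ (and a power of $x_3$ when $i_{k+1}=3$). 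In the third range ($b+c+1 \le k \le d-1$), the prefix is $x_2^bx_3^cx_4^{k-b-c}$ and $i_{k+1} = 4$, so the inner product is empty — meaning these $k$ contribute nothing at all to $\maxgen(\gaps(u))$. This already explains structurally why only $x_3$ and $x_4$ appear in the answer, and why the $x_3$-exponent $\binom{b}{2}$ is independent of both $c$ and $t$: the power of $x_3$ comes solely from the single term $k = b$ (where $i_{k+1}$ jumps from $2$ to $3$), with exponent $|B(x_2^b)|-1 = b$ times $\binom{d-b-2+0}{d-b-1}$... no — I would recheck: the $x_3$-contributions come from all $k \le b-1$ with $i_{k+1}=2$ via the $j=3$ term, summing $\sum_{k=1}^{b-1} k\binom{d-k-2+1}{d-k-1} = \sum_{k=1}^{b-1} k \cdot 1 = \binom{b}{2}$ since $\binom{d-k-1}{d-k-1}=1$. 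So the $x_3$-power collapses to $\binom{b}{2}$ cleanly.

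\textbf{Assembling $f(t)$.} For the $x_4$-exponent $f(t)$, I would collect the $j=4$ contributions from all three ranges. Writing $d = b+c+t$, each binomial of the form $\binom{d-k-2+(4-i_{k+1})}{d-k-1}$ becomes, after substituting $d = b+c+t$ and letting $k$ run over its block, a binomial in which $t$ appears linearly in the top argument; Vandermonde/hockey-stick summation over $k$ in each block should produce polynomial expressions in $b,c$ plus a term $t\binom{b}{2}$. The clean way to see $f(t) = f(0) + t\binom{b}{2}$: the coefficient of the ``$t$'' is exactly the total multiplicity coming from the range $1 \le k \le b$ with the $j=4$ term, which mirrors the $x_3$-count $\binom{b}{2}$ — because shifting $t \mapsto t+1$ shifts every relevant $d$ by $1$ and each affected binomial $\binom{\cdot}{\cdot}$ in the first block increases by a lower binomial, telescoping to $\binom{b}{2}$. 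Then $f(0)$ is a finite sum of hockey-stick evaluations; the main obstacle — the only real work — is carrying out these binomial summations accurately and massaging them into the stated closed form $(\tfrac{b+1}{3}+c)\binom{b}{2} + (b+1)\binom{c+1}{2} + \binom{c+1}{3} - c$. I would organize this by: (i) the first block contributes a term proportional to $\binom{b}{2}$ with a coefficient built from $\sum_{k=1}^{b} k\binom{t+c+2-k}{t+c+1-k}$-type sums, yielding the $\frac{b+1}{3}\binom{b}{2}$ piece (at $t=0$) together with $c\binom{b}{2}$; (ii) the second block, with $|B(x_2^bx_3^{j})|-1$ as the exponent and a pure $x_4$-power from $i_{k+1}\in\{3,4\}$, contributes the $(b+1)\binom{c+1}{2}+\binom{c+1}{3}-c$ piece; (iii) the third block contributes $0$. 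I expect step (ii) to be the fiddliest, since it requires a usable closed form for $|B(x_2^bx_3^{j})|$ and then a double summation, but it is entirely mechanical given Corollary~\ref{cor card <w>} and the hockey-stick identity.
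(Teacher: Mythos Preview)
Your plan is sound and hinges on the same main tool as the paper, namely Theorem~\ref{thm maxgen gaps u}. The block decomposition of the product over $k$, the computation $|B(x_2^k)|=k+1$, the use of Corollary~\ref{cor card <w>} for $|B(x_2^bx_3^{j})|$, and the observation that the $x_3$-exponent collapses to $\sum_{k=1}^{b-1}k=\binom{b}{2}$ all match the paper's argument exactly.

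There is one organizational difference worth noting. You apply Theorem~\ref{thm maxgen gaps u} directly to $u_0x_4^t$ with $d=b+c+t$, and then extract the $t$-dependence from the resulting binomials: the third block ($i_{k+1}=4$) contributes nothing, the second block ($i_{k+1}=3$) yields $\binom{d-k-1}{d-k-1}=1$ independently of $t$, and only the first block ($i_{k+1}=2$, $j=4$) carries $t$ via $\binom{d-k}{d-k-1}=d-k$, giving the increment $\sum_{k=1}^{b-1}k=\binom{b}{2}$ per unit of $t$. The paper instead first settles the case $t=0$ (so $d=b+c$) and then invokes Corollary~\ref{maxgen gaps of x_n u} to pass from $u_0x_4^{s-1}$ to $u_0x_4^{s}$, which immediately gives $f(s)=f(s-1)+\binom{b}{2}$. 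Your route is slightly more self-contained (it avoids Section~\ref{sec x1 xn} entirely); the paper's route keeps the binomial bookkeeping at $t=0$ a touch simpler and reuses a general structural fact about multiplication by $x_n$. Both lead to the same closed form for $f(0)$ via identical sums.

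One small clean-up: your block boundaries are indexed by the \emph{prefix} $\pre_k(u)$, but the inner product in Theorem~\ref{thm maxgen gaps u} depends on $i_{k+1}$, which jumps one step earlier. You already catch this at $k=b$; just be equally careful at $k=b+c$ (where $i_{k+1}=4$ if $t\ge 1$, so that term is vacuous) when you write out the details.
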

\begin{proof} \hfill

\noindent
\textbf{Case $t=0$.} This is the longest part of the proof, yet it follows almost mechanically from Theorem~\ref{thm maxgen gaps u} and a few formulas. In the notation of that result, let us write $u_0=x_{i_1}\cdots x_{i_d}$ with $i_1 \le \dots \le i_d$, where $d=\deg(u_0)=b+c$. Thus 
$$
u_0 =\underbrace{x_2\cdots x_2}_{b \textrm{ times}} \underbrace{x_3\cdots x_3}_{c \textrm{ times}}.
$$
Hence for $1 \le k \le d$, we have $i_k = 2$ if $k \le b$, and $i_k=3$ otherwise. By Theorem~\ref{thm maxgen gaps u}, we have
\begin{eqnarray*}
\maxgen(\gaps(u_0)) & = & \prod_{k=1}^{d-1}\left( \prod_{j=i_{k+1}+1}^n x_j^{\binom{d-k-2+j-i_{k+1}}{d-k-1}}\right)^{|B(x_{i_1}\cdots x_{i_k})|-1} \\
& = & \prod_{k=1}^{b-1}\left( \prod_{j=3}^4 x_j^{\binom{d-k-2+j-2}{d-k-1}}\right)^{|B(x_2^k)|-1} \cdot \prod_{k=b}^{d-1}\left( x_4^{1}\right)^{|B(x_2^bx_3^{k-b})|-1} \\
& = & \prod_{k=1}^{b-1}\left( x_3^1x_4^{d-k}\right)^{|B(x_2^k)|-1} \cdot \prod_{k=b}^{d-1}\left( x_4^{1}\right)^{|B(x_2^bx_3^{k-b})|-1}.
\end{eqnarray*}
We now compute the involved exponents. We have $|B(x_2^k)|=k+1$, as follows from the set equality $B(x_2^k)=\{x_1^{k-i}x_2^i \mid  0 \le i \le k\}$. On the other hand, we have
$$
|B(x_2^rx_3^s)| \ = \ \binom{s+1}{2}+(r+1)(s+1),
$$
as follows from the formula
$$
|B(x_2^rx_3^s)| \ = \ \sum_{i=0}^s |B(x_{2}^{r+s-i})|
$$
of Corollary~\ref{cor card <w>}, 
the above formula for $|B(x_2^k)|$ and some straightforward computations.

\smallskip
Inserting these exponent values into the above formula for $\maxgen(\gaps(u_0))$, we get
\begin{eqnarray*}
\maxgen(\gaps(u_0)) & = & \prod_{k=1}^{b-1}\left( x_3x_4^{d-k}\right)^{k} \cdot \prod_{k=b}^{d-1}x_4^{\binom{k-b+1}{2}+(b+1)(k-b+1)-1} \\
  & = & x_3^{\binom{b}{2}} x_4^{A+B},
\end{eqnarray*}
where 
\begin{eqnarray*}
A & = & \sum_{k=1}^{b-1}k(d-k), \\
B & = & \sum_{k=b}^{d-1}\big(\binom{k-b+1}{2}+(b+1)(k-b+1)-1\big).
\end{eqnarray*}
By the formula
$$
\sum_{k=1}^{b-1} k ^2 \ = \ \frac{2b-1}{3}\binom{b}{2}
$$
and some straightforward computations, we get
$$A = \big(\frac{b+1}{3}+c\big)\binom{b}{2}.$$
Similarly, the formula
$$
\sum_{l=1}^c \binom{l}{2} = \binom{c+1}{3}
$$
and some further straightforward computations yield
$$
B = (b+1)\binom{c+1}{2}+\binom{c+1}{3}-c.
$$
As $f(0)=A+B$, the proof of formula~\eqref{f(t)} in case $t=0$ is complete.

\medskip
\noindent
\textbf{Case $t \ge 1$.} For all $s \ge 1$, Corollary~\ref{maxgen gaps of x_n u} and the above case $t=0$ imply
$$
\maxgen(\gaps(u_0x_4^s)) \ = \ \maxgen(\gaps(u_0x_4^{s-1}))x_4^{\binom{b}{2}}
$$
by induction on $s$. The claimed formula 
$$
\maxgen(\gaps(u_0x_4^t)) \ = \ x_3^{\binom{b}{2}}x_4^{f(t)}
$$
follows by induction on $t$.
\end{proof}

\bigskip

We now proceed to determine $\maxgen(\cogaps(x_2^bx_3^cx_4^t))$. We first need two lemmas.

\begin{lemma}\label{i=1} For all $r \ge 0, s \ge 1$, we have
$$
\mu(x_2^rx_4^s, x_2^{r+1}x_4^{s-1}) = x_3 x_4^{s}.
$$
\end{lemma}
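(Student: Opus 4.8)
The plan is to compute $\mu(x_2^rx_4^s, x_2^{r+1}x_4^{s-1})$ directly by describing the lexinterval $L^*(x_2^rx_4^s, x_2^{r+1}x_4^{s-1})$, or more efficiently by reducing to a case already covered by Theorem~\ref{from m to m-1}. I would first invoke Corollary~\ref{removing v} to strip off the common factor: since $\max(x_2) = 2$ and both $x_2^rx_4^s$ and $x_2^{r+1}x_4^{s-1}$ are divisible by $x_2^r$, and since $\min$ of the smaller monomial $x_2^{r+1}x_4^{s-1}$ is $2$ (assuming $r+1 \ge 1$, which holds), I need $\max(x_2^r) \le (\min(x_2^{r}x_4^{s}))+1$; here $\min(x_2^rx_4^s) = 2$ if $r \ge 1$, so $\max(x_2^r) = 2 \le 3$, and the reduction gives $\mu(x_2^rx_4^s, x_2^{r+1}x_4^{s-1}) = \mu(x_4^s, x_2 x_4^{s-1})$ when $r \ge 1$; the case $r = 0$ is the statement $\mu(x_4^s, x_2x_4^{s-1})$ itself, so in fact the general case is equivalent to the case $r = 0$, and I should phrase the proof around that equivalence via Corollary~\ref{removing v} (noting the reduction also works for $r=0$ trivially).

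Next, with $r = 0$, I would identify $\mu(x_4^s, x_2x_4^{s-1}) = \maxgen(L^*(x_2x_4^{s-1}, x_4^s))$, i.e. the product of $\lambda(v)$ over all monomials $v$ of degree $s$ in $S_4$ with $x_2x_4^{s-1} > v \ge x_4^s$. The monomials strictly below $x_2x_4^{s-1}$ and at least $x_4^s$ are precisely those whose first ``slot'' is an index $\ge 3$: concretely, the monomials $x_3^j x_4^{s-j}$ for $0 \le j \le s-1$ together with... actually I would enumerate carefully. A monomial $v = x_{i_1}\cdots x_{i_s}$ with $i_1 \le \dots \le i_s$ satisfies $v < x_2 x_4^{s-1}$ iff comparing coordinate vectors the leftmost difference is in favour of $x_2x_4^{s-1}$; since $x_2x_4^{s-1}$ has exponent vector $(0,1,0,s-1)$, the monomials below it with exponent vector $(0,b_2,b_3,b_4)$, $b_2+b_3+b_4=s$, are those with $b_2 = 0$ (and then automatically $v \le x_3^s < x_2x_4^{s-1}$ unless $v$ equals... no, with $b_2=0$ we always have $v < x_2x_4^{s-1}$) — wait, also $b_2 = 1$ with smaller tail, but $b_2 = 1, b_3 = 0, b_4 = s-1$ is exactly $x_2x_4^{s-1}$, and any $b_2 = 1, b_3 \ge 1$ gives something larger, so the only monomials in the open interval are those with $b_2 = 0$, i.e. $v = x_3^{b_3}x_4^{s-b_3}$ with $0 \le b_3 \le s$ but excluding none at the bottom since $x_4^s$ is included. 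So $L^*(x_2x_4^{s-1},x_4^s) = \{x_3^j x_4^{s-j} \mid 0 \le j \le s\}$, which has $s+1$ elements. Then $\lambda(x_3^jx_4^{s-j}) = x_4$ when $s-j \ge 1$ (i.e. $j \le s-1$) and $\lambda(x_3^s) = x_3$, so $\maxgen = x_3 \cdot x_4^{s}$, giving exactly $x_3x_4^s$ as claimed.

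I would write this up as: apply Corollary~\ref{removing v} with $v = x_2^r$ to reduce to $r = 0$ (checking the hypothesis $\max v = 2 \le (\min(x_2^rx_4^s)) + 1$, trivially true), then enumerate $L^*(x_2x_4^{s-1}, x_4^s) = \{x_3^jx_4^{s-j} : 0 \le j \le s\}$ by a short lex-order argument, and finally take the product of the $\lambda$-values. The only mild subtlety — and the step most likely to need care — is the enumeration of the lexinterval, specifically confirming that no monomial with a positive $x_2$-exponent lies strictly between $x_2x_4^{s-1}$ and $x_4^s$; this is immediate from the definition of $>_{lex}$ via leftmost-nonzero-coordinate, since any exponent vector $(0,b_2,b_3,b_4)$ with $b_2 \ge 1$ and total degree $s$ is either equal to $(0,1,0,s-1)$ or lexicographically larger, hence not in the half-open interval. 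Everything else is the routine arithmetic $\sum$ of the $\lambda$-contributions, which I would not belabor.
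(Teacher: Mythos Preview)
Your proposal is correct and lands on the same enumeration of the lexinterval as the paper. The only organizational difference is that the paper skips the reduction to $r=0$ via Corollary~\ref{removing v} and instead applies Proposition~\ref{predecessor} directly, stepping through the $s+1$ successive predecessors of $x_2^r x_4^s$: $s$ steps reach $x_2^r x_3^s$ (each contributing $\lambda=x_4$), and one more step reaches $x_2^{r+1} x_4^{s-1}$ (contributing $\lambda=x_3$), which by arrow composition gives $x_3x_4^s$.

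One small slip to clean up in your writeup: $x_2^{r+1}x_4^{s-1}$ is the \emph{larger} endpoint, not the smaller, and the hypothesis of Corollary~\ref{removing v} should be verified against $\min u_1 = \min(x_2 x_4^{s-1}) = 2$ (the larger monomial after factoring out $v=x_2^r$), not against $\min(x_2^r x_4^s)$; the needed inequality $\max(x_2^r)=2 \le 3$ holds either way, so the conclusion is unaffected.
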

\begin{proof} Starting from $x_2^rx_4^s$ and taking $s+1$ successive predecessors, Proposition~\ref{predecessor} yields
$$
x_2^rx_4^s  \overset{x_4^s}{\xrightarrow{\hspace*{1.8cm}}}  x_2^bx_3^{s}  \overset{x_3}{\xrightarrow{\hspace*{1.8cm}}} x_2^{r+1}x_4^{s-1}
$$
in arrow notation, \emph{i.e.} $\mu(x_2^rx_4^s, x_2^bx_3^{s}) = x_4^{s}$ and $\mu(x_2^bx_3^{s}, x_2^{r+1}x_4^{s-1}) = x_3$. The desired formula follows by arrow composition.
\end{proof}

\begin{lemma}\label{any i} For all $r \ge 0$ and $1 \le i \le s$, we have
$$
\mu(x_2^rx_4^s, x_2^{r+i}x_4^{s-i}) = x_3^i x_4^{\binom{i+1}{2}+i(s-i)}.
$$
\end{lemma}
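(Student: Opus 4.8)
The plan is to argue by induction on $i$, using Lemma~\ref{i=1} for the base case and arrow composition (Lemma~\ref{composition of mu}) for the inductive step.

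For $i=1$ the assertion is exactly Lemma~\ref{i=1}, since $\binom{2}{2}+1\cdot(s-1)=s$. Now let $i\ge 2$ and assume the formula holds for $i-1$. Since raising the exponent of the smaller variable $x_2$ strictly lowers a monomial in the lexicographic order, we have
$$
x_2^rx_4^s \ > \ x_2^{r+i-1}x_4^{s-i+1} \ > \ x_2^{r+i}x_4^{s-i},
$$
so Lemma~\ref{composition of mu} gives
$$
\mu(x_2^rx_4^s,\,x_2^{r+i}x_4^{s-i}) \ = \ \mu(x_2^rx_4^s,\,x_2^{r+i-1}x_4^{s-i+1})\cdot\mu(x_2^{r+i-1}x_4^{s-i+1},\,x_2^{r+i}x_4^{s-i}).
$$
The first factor is computed by the induction hypothesis (with $i-1$ in place of $i$), namely $x_3^{\,i-1}x_4^{\binom{i}{2}+(i-1)(s-i+1)}$. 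The second factor is given by Lemma~\ref{i=1} applied with $r+i-1$ and $s-i+1$ in place of $r$ and $s$ — valid because $i\le s$ ensures $s-i+1\ge 1$ — and equals $x_3\,x_4^{\,s-i+1}$.

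Multiplying these two monomials and simplifying the exponent of $x_4$ via $\binom{i}{2}+i=\binom{i+1}{2}$, i.e. checking that
$$
\binom{i}{2}+(i-1)(s-i+1)+(s-i+1)\ =\ \binom{i}{2}+i(s-i+1)\ =\ \binom{i+1}{2}+i(s-i),
$$
yields $\mu(x_2^rx_4^s,\,x_2^{r+i}x_4^{s-i}) = x_3^i x_4^{\binom{i+1}{2}+i(s-i)}$, completing the induction. The argument is entirely mechanical once Lemma~\ref{i=1} and arrow composition are in hand; the only point requiring care — and the only (minor) obstacle — is the elementary bookkeeping of the exponent of $x_4$ displayed above.
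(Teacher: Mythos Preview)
Your argument is correct and follows essentially the same route as the paper's: induction on $i$ with base case Lemma~\ref{i=1} and the inductive step handled by arrow composition (Lemma~\ref{composition of mu}); the paper phrases the composition as a full product $\prod_{j=0}^{i-1}\mu(x_2^{r+j}x_4^{s-j},x_2^{r+j+1}x_4^{s-j-1})$ rather than a two-factor split, but this is the same idea.

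One slip to fix: your displayed inequality chain is reversed. Raising the exponent of $x_2$ --- the variable of \emph{smaller index}, hence lexicographically \emph{larger} --- \emph{raises} the monomial, so the correct ordering is
\[
x_2^{r+i}x_4^{s-i}\ >\ x_2^{r+i-1}x_4^{s-i+1}\ >\ x_2^{r}x_4^{s}.
\]
This does not affect the rest of your proof, because your application of Lemma~\ref{composition of mu} is written with the arguments of $\mu$ in the right order (smaller monomial first); only the verbal justification needs correcting.
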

\begin{proof} By induction on $i$. The case $i=1$ is just Lemma~\ref{i=1}. By arrow composition, we have
$$
\mu(x_2^rx_4^s, x_2^{r+i}x_4^{s-i}) = \prod_{j=0}^{i-1} \mu(x_2^{r+j}x_4^{s-j}, x_2^{r+j+1}x_4^{s-j-1}).
$$
Applying Lemma~\ref{i=1} again to each factor, we get
$$
\mu(x_2^rx_4^s, x_2^{r+i}x_4^{s-i}) = x_3^ix_4^{\sum_{j=0}^{i-1} (s-j)}.
$$
Finally, $\sum_{j=0}^{i-1} (s-j)= \binom{i+1}{2}+i(s-i)$ and the proof is complete.
\end{proof}

\begin{proposition}\label{h(t)} We have
$$
x_2^bx_3^cx_4^t  \overset{x_3^{\binom{b}{2}}x_4^{h(t)}}{\xrightarrow{\hspace*{2cm}}}x_2^{b+\binom{b}{2}}x_4^{c+t-\binom{b}{2}},
$$
where 
$$
h(t) \, = \, (c+t)\binom{b}{2}-\binom{\binom{b}{2}}{2}-c.
$$
\end{proposition}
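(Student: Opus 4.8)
The plan is to realize the stated arrow as a concatenation of two shorter arrows and then conclude by arrow composition (Lemma~\ref{composition of mu}). Throughout write $p=\binom{b}{2}$; we may assume $b\ge 2$ and $c+t\ge p$, which is the regime in which the statement gets used, so that $p-1\ge 0$ and $c+t-p\ge 0$ and the target monomial $x_2^{b+p}x_4^{c+t-p}$ makes sense. The intermediate monomial I would aim for is $x_2^{b+1}x_4^{c+t-1}$.

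For the first arrow, I would start from $u=x_2^bx_3^cx_4^t$ and compute its first $t+1$ iterated predecessors by repeated use of Proposition~\ref{predecessor}: one finds $\pred^i(u)=x_2^bx_3^{c+i}x_4^{t-i}$ for $0\le i\le t$, followed by the ``jump'' $\pred^{t+1}(u)=x_2^{b+1}x_4^{c+t-1}$, since $\max(x_2^bx_3^{c+t})=3$ and so the predecessor of $x_2^bx_3^{c+t}$ replaces one $x_3$ by $x_2$ and all remaining $x_3$'s by $x_4$'s. The last variables of these $t+1$ monomials are $x_4$ (for $0\le i\le t-1$) and then $x_3$ (for $i=t$), so that
$$
x_2^bx_3^cx_4^t \ \arrow{x_3x_4^t}{1.8cm}\ x_2^{b+1}x_4^{c+t-1}.
$$
For the second arrow, both remaining endpoints are of the form $x_2^\bullet x_4^\bullet$, so Lemma~\ref{any i} applies directly with $r=b+1$, $s=c+t-1$ and $i=p-1$ (the degenerate case $i=0$, i.e.\ $b=2$, being trivial); since $r+i=b+p$, $s-i=c+t-p$, $\binom{i+1}{2}=\binom{p}{2}$ and $i(s-i)=(p-1)(c+t-p)$, this gives
$$
x_2^{b+1}x_4^{c+t-1} \ \arrow{x_3^{p-1}x_4^{\binom{p}{2}+(p-1)(c+t-p)}}{4cm}\ x_2^{b+p}x_4^{c+t-p}.
$$

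Concatenating the two arrows via Lemma~\ref{composition of mu} yields
$$
x_2^bx_3^cx_4^t \ \arrow{x_3^{p}\,x_4^{\,t+\binom{p}{2}+(p-1)(c+t-p)}}{4.6cm}\ x_2^{b+p}x_4^{c+t-p},
$$
so it only remains to simplify the exponent of $x_4$. Using $(p-1)p=2\binom{p}{2}$ one has $(p-1)(c+t-p)=(p-1)(c+t)-2\binom{p}{2}$, whence
$$
t+\binom{p}{2}+(p-1)(c+t-p)=t+(p-1)(c+t)-\binom{p}{2}=p(c+t)-\binom{p}{2}-c,
$$
using $t+(p-1)(c+t)=p(c+t)-c$. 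Since $p=\binom{b}{2}$, this last expression is exactly $h(t)=(c+t)\binom{b}{2}-\binom{\binom{b}{2}}{2}-c$, which completes the proof. The argument is largely mechanical; the two points that need attention are the correct identification of the predecessor ``jump'' $x_2^bx_3^{c+t}\mapsto x_2^{b+1}x_4^{c+t-1}$ (which pins down the first $t+1$ predecessors, and in particular makes $x_3$ occur exactly once among their last variables) and the invocation of Lemma~\ref{any i} for the $x_2$/$x_4$-tail. Beyond that, only the elementary binomial bookkeeping at the end, together with keeping the degenerate parameter ranges out of scope, requires any care.
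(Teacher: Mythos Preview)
Your proof is correct and follows essentially the same route as the paper: split the arrow at the intermediate monomial $x_2^{b+1}x_4^{c+t-1}$, handle the first piece by the $t+1$ explicit predecessors ending in the jump $x_2^bx_3^{c+t}\mapsto x_2^{b+1}x_4^{c+t-1}$, handle the second piece by Lemma~\ref{any i} with $(r,s,i)=(b+1,\,c+t-1,\,\binom{b}{2}-1)$, and then simplify the $x_4$-exponent. Your abbreviation $p=\binom{b}{2}$ and the explicit parameter restrictions $b\ge 2$, $c+t\ge p$ make the write-up slightly cleaner, but the argument is the same.
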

\begin{proof}
Starting from $x_2^bx_3^cx_4^t$ and taking $t+1$ successive predecessors, Proposition~\ref{predecessor} yields
$$
x_2^bx_3^cx_4^t \overset{x_4^t}{\xrightarrow{\hspace*{1.8cm}}} x_2^bx_3^{c+t}  \overset{x_3}{\xrightarrow{\hspace*{1.8cm}}} x_2^{b+1}x_4^{c+t-1}.
$$
Hence
\begin{equation}\label{initial step}
\mu(x_2^bx_3^cx_4^t,x_2^{b+1}x_4^{c+t-1})=x_3x_4^t.
\end{equation}
From $x_2^{b+1}x_4^{c+t-1}$, we must still reach $x_2^{b+\binom{b}{2}}x_4^{c+t-\binom{b}{2}})$. This can be done using Lemma~\ref{any i} to $(r,s,i)=(b+1,c+t-1,\binom{b}{2}-1)$. We obtain
\begin{equation}\label{general step}
\mu(x_2^{b+1}x_4^{c+t-1}, x_2^{b+\binom{b}{2}}x_4^{c+t-\binom{b}{2}})=
x_3^{\binom{b}{2}-1} x_4^{\binom{\binom{b}{2}}{2}+(\binom{b}{2}-1)(c+t-\binom{b}{2})}.
\end{equation}
Hence, combining \eqref{initial step} and \eqref{general step} using arrow composition, we get
$$
\mu(x_2^bx_3^cx_4^t,x_2^{b+\binom{b}{2}}x_4^{c+t-\binom{b}{2}}) = 
x_3^{\binom{b}{2}} x_4^{\binom{\binom{b}{2}}{2}+(\binom{b}{2}-1)(c+t-\binom{b}{2})+t}.
$$
It remains to show that the exponent of $x_4$ in the monomial of the above right-hand side is equal to $h(t)$. Indeed, we have
\begin{eqnarray*}
\binom{\binom{b}{2}}{2}+(\binom{b}{2}-1)(c+t-\binom{b}{2})+t & = & \\
\binom{\binom{b}{2}}{2}+(c+t)\binom{b}{2} -(\binom{b}{2}-1)\binom{b}{2}-c.
\end{eqnarray*}
Since
$$
-(\binom{b}{2}-1)\binom{b}{2} = -2\binom{\binom{b}{2}}{2},
$$
the desired equality with $h(t)$ follows.
\end{proof}

\begin{remark} By Propositions \ref{prop maxgen gaps u} and \ref{h(t)}, for $t \ge 0$ we have
\begin{eqnarray*}
f(t)-h(t) & = & f(0) - c\binom{b}{2}+\binom{\binom{b}{2}}{2}+c\\
& = & \frac{b+1}{3}\binom{b}{2}+(b+1)\binom{c+1}{2}+\binom{c+1}{3}+\binom{\binom{b}{2}}{2}.
\end{eqnarray*}
In particular, $f(t)-h(t)$ is a positive constant. This will be used below.
\end{remark}

Here is our main result.
\begin{theorem}\label{main thm}
Let $u=x_1^ax_2^bx_3^cx_4^t \in S_4$. Then $u$ is a Gotzmann monomial in $S_4$ if and only if
$$
t \ \ge \ \binom{\binom{b}{2}}{2}+\frac{b+4}{3}\binom{b}{2}+(b+1)\binom{c+1}{2}+\binom{c+1}{3}-c.
$$
\end{theorem}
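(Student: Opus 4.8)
The plan is to run everything through the criterion of Theorem~\ref{Gotzmann monomials}: $u$ is Gotzmann if and only if $\maxgen(\gaps(u))=\maxgen(\cogaps(u))$. By Theorem~\ref{u and x_1u} we may discard the factor $x_1^a$ and take $u=u_0x_4^t$ with $u_0=x_2^bx_3^c$. One side is already known: Proposition~\ref{prop maxgen gaps u} gives $\maxgen(\gaps(u))=x_3^{\binom{b}{2}}x_4^{f(t)}$, so in particular $g:=|\gaps(u)|=\binom{b}{2}+f(t)$, and by Proposition~\ref{pred and cogaps} we have $\cogaps(u)=\pred_g(u)$. Thus everything reduces to computing $w_2:=\maxgen(\pred_g(u))$ and testing whether $w_2=x_3^{\binom{b}{2}}x_4^{f(t)}$.

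Assume first $b\ge2$ and $c+t\ge\binom{b}{2}$. To evaluate $w_2$ I would cut the chain of $g$ iterated predecessors of $u$ at the monomial $u'=x_2^{b+\binom{b}{2}}x_4^{c+t-\binom{b}{2}}$ produced by Proposition~\ref{h(t)}, which tells us $u'=\pred^{\binom{b}{2}+h(t)}(u)$ and $\maxgen(\pred_{\binom{b}{2}+h(t)}(u))=x_3^{\binom{b}{2}}x_4^{h(t)}$. By the Remark following that proposition, $N:=f(t)-h(t)$ is a \emph{positive constant}, independent of $t$, equal to $\binom{\binom{b}{2}}{2}+\frac{b+1}{3}\binom{b}{2}+(b+1)\binom{c+1}{2}+\binom{c+1}{3}$; hence $g\ge\binom{b}{2}+h(t)$ and the cut is valid: $\pred_g(u)=\pred_{\binom{b}{2}+h(t)}(u)\sqcup\pred_N(u')$. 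Since $\maxgen$ is multiplicative over disjoint unions by Lemma~\ref{maxgen}, this yields
$$
w_2=x_3^{\binom{b}{2}}x_4^{h(t)}\cdot\maxgen(\pred_N(u')).
$$

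The remaining ingredient, $\maxgen(\pred_N(u'))$, is purely local. Writing $u'=x_2^{B}x_4^{T}$ with $B=b+\binom{b}{2}$ and $T=c+t-\binom{b}{2}\ge0$, repeated application of Proposition~\ref{predecessor} gives $\pred^i(u')=x_2^{B}x_3^{i}x_4^{T-i}$ for $0\le i\le T$; the first $T$ of these have last variable $x_4$, while $\pred^T(u')$ has last variable $x_3$ (or $x_2$, if $T=0$). So $\maxgen(\pred_N(u'))=x_4^{N}$ exactly when $N\le T$, and otherwise $\maxgen(\pred_N(u'))$ is divisible by $x_3$ or by $x_2$. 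In the first case $w_2=x_3^{\binom{b}{2}}x_4^{h(t)+N}=x_3^{\binom{b}{2}}x_4^{f(t)}=\maxgen(\gaps(u))$, so $u$ is Gotzmann; in the second case $w_2$ is divisible by $x_3^{\binom{b}{2}+1}$ or by $x_2$, whereas $x_3^{\binom{b}{2}}x_4^{f(t)}$ is divisible by neither, so $u$ is not Gotzmann. Hence, for $b\ge2$ and $c+t\ge\binom{b}{2}$, $u$ is Gotzmann if and only if $N\le c+t-\binom{b}{2}$, i.e. $t\ge N+\binom{b}{2}-c$, and the one-line identity $\frac{b+1}{3}\binom{b}{2}+\binom{b}{2}=\frac{b+4}{3}\binom{b}{2}$ turns $N+\binom{b}{2}-c$ into precisely the bound of the theorem.

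Two small ranges fall outside the scope of Proposition~\ref{h(t)} and must be mopped up by hand. If $b\le1$, then $\binom{b}{2}=0$, Proposition~\ref{h(t)} is vacuous, and the situation is in fact easier: here $f(t)=f(0)$ is constant, the chain $\pred_g(u)$ first acquires the monomial $x_2^bx_3^{c+t}$ (last variable $x_3$) precisely when $g>t$, so $u$ is Gotzmann iff $g=f(0)\le t$ — which is again the stated bound, since the $\binom{\binom{b}{2}}{2}$ and $\frac{b+4}{3}\binom{b}{2}$ terms vanish. If instead $b\ge2$ but $c+t<\binom{b}{2}$, then $t<\binom{b}{2}-c$ lies strictly below the claimed threshold, and one checks directly that the chain $\pred_g(u)$ reaches the monomial $x_2^{b+c+t}$ (last variable $x_2$), forcing $x_2\mid w_2$ while $x_2\nmid x_3^{\binom{b}{2}}x_4^{f(t)}$, so $u$ is not Gotzmann. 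I expect the main obstacle to be this bookkeeping — matching iterated-predecessor counts against the degrees of the various maxgen monomials and keeping the chain-splitting consistent — together with the tedious but routine arithmetic simplifications; the conceptual heavy lifting is already contained in Propositions~\ref{prop maxgen gaps u} and~\ref{h(t)} and the Remark.
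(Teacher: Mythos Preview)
Your proposal is correct and follows essentially the same route as the paper: reduce to $a=0$ via Theorem~\ref{u and x_1u}, compute $\maxgen(\gaps(u))$ via Proposition~\ref{prop maxgen gaps u}, cut the predecessor chain at $x_2^{b+\binom{b}{2}}x_4^{c+t-\binom{b}{2}}$ via Proposition~\ref{h(t)}, and compare the remaining $x_4$-exponent with $N=f(t)-h(t)$. You are in fact slightly more careful than the paper in explicitly disposing of the boundary ranges $b\le1$ and $c+t<\binom{b}{2}$; for the latter, a clean way to justify your ``one checks directly'' is to note that along the predecessor chain from $u$ there are only $c+t<\binom{b}{2}$ monomials with last variable $x_3$ before $x_2^{b+c+t}$ is reached, so $w_2$ can never equal $x_3^{\binom{b}{2}}x_4^{f(t)}$ regardless of whether $\pred_g(u)$ actually contains $x_2^{b+c+t}$.
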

As expected, the absence of exponent $a$ in this bound on $t$ is consistent with Theorem~\ref{u and x_1u}. 
\begin{proof} By Theorem~\ref{u and x_1u}, we may assume $a=0$. Denote $u_0=x_2^bx_3^c$, so that $u=u_0x_4^t$. There are two steps. 

\smallskip
\noindent
\textbf{Step 1.} \textit{The monomial $u_0x_4^t$ is Gotzmann if and only if}
\begin{equation}\label{condition}
t \ \ge \ f(t)-h(t)+\binom{b}{2}-c.
\end{equation}

Indeed, by Proposition~\ref{prop maxgen gaps u}, we have
\begin{equation}\label{maxgen gaps u}
\maxgen(\gaps(u_0x_4^t))=x_3^{\binom{b}{2}}x_4^{f(t)}.
\end{equation}
Thus $|\gaps(u_0x_4^t)|=\binom{b}{2}+f(t)$. For $u_0x_4^t$ to be a Gotzmann monomial, we apply the criterion given by Theorem~\ref{Gotzmann monomials}. Thus, by \eqref{maxgen gaps u}, we need to determine those $t \ge 0$ for which 
\begin{equation}\label{criterion}
\maxgen(\cogaps(u_0x_4^t))=x_3^{\binom{b}{2}}x_4^{f(t)}.
\end{equation}
Now $\cogaps(u_0x_4^t)=\pred_{\binom{b}{2}+f(t)}(u_0x_4^t)$ by Proposition~\ref{pred and cogaps}. 
In order to compute the maxgen monomial of the set of $\binom{b}{2}+f(t)$ predecessors of $u=u_0x_4^t$, we first compute it for its $\binom{b}{2}+h(t)$ predecessors. Let
\begin{eqnarray*}
LI(t) & = & \pred_{\binom{b}{2}+h(t)}(u_0x_4^t), \\
v(t) & = & \pred^{\binom{b}{2}+h(t)}(u_0x_4^t).
\end{eqnarray*}
Then $LI(t)=L^*(v(t), u_0x_4^t)$, and 
we seek the maxgen monomial of this lexinterval. By Proposition~\ref{h(t)}, we have
\begin{equation}\label{v(t)}
u_0x_4^t=x_2^bx_3^cx_4^t  \overset{x_3^{\binom{b}{2}}x_4^{h(t)}}{\xrightarrow{\hspace*{2cm}}}x_2^{b+\binom{b}{2}}x_4^{c+t-\binom{b}{2}}.
\end{equation}
Hence
\begin{eqnarray*}
v(t) & = & x_2^{b+\binom{b}{2}}x_4^{c+t-\binom{b}{2}}, \\
\maxgen(LI(t)) & = & x_3^{\binom{b}{2}}x_4^{h(t)}.
\end{eqnarray*}
Now, restarting from $v(t)$, it remains to compute $f(t)-h(t)$ more predecessors in order to reach $\pred_{\binom{b}{2}+f(t)}(u_0x_4^t)$. We'll then have
\begin{eqnarray*}
\maxgen(\cogaps(u_0x_4^t)) & = & \maxgen(LI(t))\maxgen(\pred_{f(t)-h(t)}(v(t))) \\
& = & x_3^{\binom{b}{2}}x_4^{h(t)}\maxgen(\pred_{f(t)-h(t)}(v(t))).
\end{eqnarray*}
Therefore, in order to satisfy equality~\eqref{criterion} for $u_0x_4^t$ to be a Gotzmann monomial, it is necessary and sufficient to satisfy
$$
\maxgen(\pred_{f(t)-h(t)}(v(t)))=x_4^{f(t)-h(t)}.
$$
Since $v(t)=x_2^{b+\binom{b}{2}}x_4^{c+t-\binom{b}{2}}$, the above condition is realizable if and only if the exponent of $x_4$ in $v(t)$ is large enough, namely satisfies
$$
c+t-\binom{b}{2} \ge f(t)-h(t).
$$
This condition being equivalent to $\eqref{condition}$, the proof of the claim in Step 1 is complete.

\smallskip
\noindent
\textbf{Step 2.} A straightforward computation on the right-hand side of \eqref{condition} yields
{\small
\begin{eqnarray*}
f(t)-h(t)+\binom{b}{2}-c & = & f_0+t\binom{b}{2}-\big((c+t)\binom{b}{2}-\binom{\binom{b}{2}}{2}-c\big)+\binom{b}{2}-c \\
& = & f_0-c\binom{b}{2}+\binom{\binom{b}{2}}{2}+c+\binom{b}{2}-c \\
& = & f_0-c\binom{b}{2}+\binom{\binom{b}{2}}{2}+\binom{b}{2} \\
& = & \binom{\binom{b}{2}}{2}+\frac{b+4}{3}\binom{b}{2}+(b+1)\binom{c+1}{2}+\binom{c+1}{3}-c.
\end{eqnarray*}}
The conjunction of Steps 1 and 2 completes the proof of the theorem.
\end{proof}

\bigskip
\bigskip

\noindent
{\small
\textbf{Authors addresses}

\medskip

\noindent
$\bullet$ Vittoria {\sc Bonanzinga}\textsuperscript{a,b},

\noindent
\textsuperscript{a} Univ. Mediterranea di Reggio Calabria, 
DIIES\\
\textsuperscript{b} Via Graziella (Feo di Vito), 89100 Reggio Calabria, Italia \\
\email{vittoria.bonanzinga@unirc.it}

\medskip

\noindent
$\bullet$ Shalom {\sc Eliahou}\textsuperscript{a,b},

\noindent
\textsuperscript{a}Univ. Littoral C\^ote d'Opale, EA 2597 - LMPA - Laboratoire de Math\'ematiques Pures et Appliqu\'ees Joseph Liouville, F-62228 Calais, France\\
\textsuperscript{b}CNRS, FR 2956, France\\
\email{eliahou@univ-littoral.fr}
}


\begin{thebibliography}{99}
 \bibitem{An} \textsc{Anderson} Ian, Combinatorics of finite sets.
Oxford Science Publications. The Clarendon Press, Oxford University Press, New York,  1987.

\bibitem{AAH} \textsc{Aramova} Annette, \textsc{Avramov} Luchezar L. and \textsc{Herzog} J\"urgen, Resolutions of monomial ideals and 
cohomology over exterior algebras, Trans. Amer. Math. Soc. 352 (2000) 579\textendash 594.
\bibitem{AHH}  \textsc{Aramova} Annette, \textsc{Herzog} J\"urgen and \textsc{Hibi} Takayuki, Gotzmann theorems for exterior algebras and combinatorics, J. Algebra 191 (1997) 174\textendash 211.
\bibitem{B} \textsc{Bonanzinga} Vittoria, Principal Borel ideals and Gotzmann ideals, Arch. Math. (Basel) 81 (2003) 385--396. 
\bibitem{FMS} \textsc{Francisco} Christopher A., \textsc{Mermin} Jeffrey and \textsc{Schweig} Jay, Borel generators, J. Algebra 332 (2011) 522\textendash 542.
\bibitem{G} \textsc{Gotzmann} Gerd, Eine Bedingung f\"ur die Flachheit und das Hilbertpolynomeines graduierten Ringes, Math. Z. 158 (1978) 61\textendash 70.
\bibitem{He} \textsc{Herzog} J\"urgen, Generic initial ideals and graded Betti numbers.
 Computational commutative algebra and combinatorics (Osaka, 1999), 
 75--120, Adv. Stud. Pure Math., 33, Math. Soc. Japan, Tokyo,  2002.
\bibitem{Ho} \textsc{Hoefel} Andrew H., Gotzmann edge ideals, Communications in Algebra 40 (2012) 1222\textendash 1233.
\bibitem{HM} \textsc{Hoefel} Andrew H. and \textsc{Mermin} Jeffrey, Gotzmann squarefree ideals, Illinois J. Math. 56 (2012) 397\textendash 414.
\bibitem{Me} \textsc{Mermin} Jeffrey, Lexlike sequences, J. Algebra 303 (2006) 295\textendash 308.
\bibitem{MH} \textsc{Murai} Satoshi and \textsc{Hibi} Takayuki, Gotzmann ideals of the polynomial ring, Math. Z. 260 (2008) 629\textendash 646.
\bibitem{Mu1} \textsc{Murai} Satoshi, A combinatorial proof of Gotzmann's persistence theorem for monomial ideals, Eur. J.  Combinatorics 29 (2008) 322\textendash 333.
 \bibitem{Mu2} \textsc{Murai} Satoshi, Gotzmann monomial ideals, Illinois J. Math. 51 (2007) 843\textendash 852.
\bibitem{OOS} \textsc{Olteanu} Anda, \textsc{Olteanu} Oana and \textsc{Sorrenti} Loredana, Gotzmann lexsegment ideals, Le Matematiche (Catania) 63 (2008) 229\textendash 241.
\bibitem{PS} \textsc{Pir} Ata Firat and \textsc{Sezer} M\"{u}fit, Two remarks on monomial Gotzmann sets, J. Pure and Applied Algebra 216 (2012) 833--836.

\end{thebibliography}
\end{document}